\newcommand{\nd}{\mathrm{nd}}
\newcommand{\frkd}{{\mathfrak d}}    
\newcommand{\frke}{{\mathfrak e}}    
\newcommand{\frkf}{{\mathfrak f}}
\newcommand{\cald}{{\mathcal D}}
\newcommand{\call}{{\mathcal L}}
\newcommand{\calr}{{\mathcal R}}
\newcommand{\calu}{{\mathcal U}}
\newcommand{\CC}{{\mathbb C}}
\newcommand{\QQ}{{\mathbb Q}}
\newcommand{\RR}{{\mathbb R}}
\newcommand{\ZZ}{{\mathbb Z}}
\newcommand{\smallcirc}{\lower .3em \hbox{\rm\char'27}\!}
\theoremstyle{plain}
\newtheorem{theorem}{Theorem}[section]
\newtheorem{lemma}{Lemma}[section]
\newtheorem{proposition}{Proposition}[section]
\theoremstyle{definition}
\newtheorem{conjecture}{Conjecture}[section]
\newtheorem{question}{Question}[section]
\newtheorem{remark}{{\bf Remark}}[section]
\theoremstyle{remark}
\def\smallddots{\mathinner
{\mskip1mu\raise3pt\vbox{\kern7pt\hbox{.}}
\mskip1mu\raise0pt\hbox{.}
\mskip1mu\raise-3pt\hbox{.}\mskip1mu}}
\theoremstyle{plain} 
\newtheorem{thms}{Theorem}[subsection] 
\newtheorem{lems}[thms]{Lemma}
\newtheorem{props}[thms]{Proposition}
\newtheorem{cors}[thms]{Corollary}
\theoremstyle{definition}
\newtheorem*{xrem}{Remark}
\numberwithin{equation}{section}
\theoremstyle{remark} 
\subjclass[2020]{11F46, 11F67, 11F66}
\begin{document}

\title[Rankin-Selberg convolution]{ Rankin-Selberg convolution for the Duke-Imamoglu-Ikeda lift}

\author{Hidenori Katsurada and Henry H. Kim }
\address{Hidenori Katsurada \\
Muronan Institute of Technology\\
27-1 Mizumoto Muroran 050\\
Japan}
\email{hidenori@mmm.muroran-it.ac.jp}

\address{Henry H. Kim \\
Department of mathematics \\
 University of Toronto \\
Toronto, Ontario M5S 2E4, CANADA \\
and Korea Institute for Advanced Study, Seoul, KOREA}
\email{henrykim@math.toronto.edu}
\thanks{The first author is partially supported by KAKENHI Grant Number 16H03919.
 The second author is partially supported by NSERC grant \#482564.}

\date{December 21, 2021}

\maketitle
\begin{abstract}
For two Hecke eigenforms $h_1$ and $h_2$ in the Kohnen plus space of half-integral weight, let $I_n(h_1)$ and $I_n(h_2)$ be the Duke-Imamoglu-Ikeda lift of $h_1$ and $h_2$, respectively, which are Siegel cusp forms with respect to $Sp_n(\ZZ)$. Moreover, let $E_{n/2+1/2}$ be the Cohen Eisenstein series of weight $n/2+1/2$. We then express the Rankin-Selberg convolution $R(s,I_n(h_1),I_n(h_2))$ of $I_n(h_1)$ and $I_n(h_2)$ in terms of a certain Dirichlet series $D(s,h_1,h_2,E_{n/2+1/2})$, which is similar to the triple convolution product of $h_1, h_2$ and $E_{n/2+1/2}$. We apply our formula to mass equidistribution for the Duke-Imamoglu-Ikeda lift assuming the holomorphy of $D(s,h_1,h_1,E_{n/2+1/2})$.
\end{abstract}

\section{Introduction} 
For Siegel modular forms $F_1$ and $F_2$, let $R(s,F_1,F_2)$ be the Rankin-Selberg convolution of $F_1$ and $F_2$. The first named author and Kawamura \cite{KK15}
 gave an explicit formula of $R(s,F,F)$ for a certain half-integral weight Siegel modular form $F$ related to the Duke-Imamoglu-Ikeda lift (D-I-I lift for short) in terms of well-known Dirichlet series and $L$-functions. As a result, we proved the conjecture on the period of the D-I-I lift proposed by Ikeda \cite{Ik2} (cf. Theorem \ref{th.Ikeda-conjecture}). Then a natural question arises:

\bigskip

What about $R(s,F,F)$ when $F$ is the D-I-I lift itself?

\bigskip

In this paper, when $F_1$ and $F_2$ are the D-I-I lifts, we express $R(s,F_1,F_2)$ in terms of a certain `triple convolution like Dirichlet series' attached to half integral weight modular forms.

To be more precise, for $i=1,2,3$, let $h_i$ be a Hecke eigenform in the Kohnen plus space of half-integral weight $l_i +1/2$ for $\varGamma_0(4)$. Then we define a Dirichlet series $D(s,h_1,h_2,h_3)$ (Definition \ref{D}), which can be expressed as 
 an infinite sum of Euler products of degree $10$, and it is similar to the triple convolution Dirichlet series for $h_1,h_2,h_3$. 

Let $n$ be a positive even integer, and for $i=1,2$, let $h_i$ be a Hecke eigenform in the Kohnen plus space of weight $k_i-n/2
+1/2$ with respect to $\varGamma_0(4)$, and $f_i$ the primitive form of weight $2k_i-n$ with respect to $SL_2(\ZZ)$ corresponding to $h_i$ under the Shimura correspondence. Let $I_n(h_i)$ be the D-I-I lift of $h_i$ which is a Siegel cusp form of weight $k_i$ with respect to $Sp_n(\ZZ)$. Then we express $R(s,I_n(h_1),I_n(h_2))$ in terms of $D(s,h_1,h_2,E_{n/2+1/2})$ and the tensor product $L$-function $L(s,f_1 \otimes f_2)$, where $E_{n/2+1/2}$ is the Cohen Eisenstein series of weight $n/2+1/2$ (cf. Theorem \ref{th.explicit-RS}). The method of doing it is similar to that in \cite{KK15}.
As a corollary, we prove the analytic properties (meromorphy, functional equation, residue formula) of $D(s,h_1,h_2,E_{n/2+1/2})$ (cf. Theorem \ref{th.main-result}).
Moreover, we apply our formula to mass equidistribution for the D-I-I lift assuming the holomorphy of $D(s,h_1,h_1,E_{n/2+1/2})$.

The paper is organized as follows. In Section 2, we review several $L$-functions attached to a primitive form for $SL_2(\ZZ)$, and the Rankin-Selberg convolution for a Siegel modular form. Moreover we review the D-I-I lift $I_n(h)$ of a Hecke eigenform $h$ in the Kohnen plus subspace of half-integral weight to the space of Siegel cusp forms of degree $n$, and its period relation. In Section 3, we define the Dirichlet series $D(s;h_1,h_2,h_3)$ attached to Hecke eigenforms $h_1,h_2,h_3$ in the Kohnen plus subspace, and  we  state our main results.  In Section 4, we reduce our computation to that of certain formal power series, which we call formal power series of Rankin-Selberg type. Using this, we prove our main results. In Section 5, we apply our main results to mass equidistribution for the D-I-I lift assuming the holomorphy of $D(s;h_1,h_2,E_{n/2+1/2})$.

\medskip 
\noindent
{\bf Notation.}  
Let $R$ be a commutative ring. We denote by $R^{\times}$  the unit group of $R,$  respectively.  We denote by $M_{mn}(R)$ the set of
$m \times n$-matrices with entries in $R.$ In particular put $M_n(R)=M_{nn}(R).$   Put $GL_m(R) = \{A \in M_m(R) \ | \ \det A \in R^\times \},$ where $\det
A$ denotes the determinant of a square matrix $A$. For an $m \times n$-matrix $X$ and an $m \times m$-matrix
$A$, we write $A[X] = {}^t X A X,$ where $^t X$ denotes the
transpose of $X$. Let $S_n(R)$ denote
the set of symmetric matrices of degree $n$ with entries in
$R.$ Furthermore, if $R$ is an integral domain of characteristic different from $2,$ let  ${\mathcal L}_n(R)$ denote the set of half-integral matrices of degree $n$ over $R$, that is, ${\mathcal L}_n(R)$ is the subset of symmetric
matrices of degree $n$ whose $(i,j)$-component belongs to
$R$ or $\frac{1}{2}R$ according as $i=j$ or not.  
In particular, we put ${\mathcal L}_n={\mathcal L}_n(\ZZ)$, and ${\mathcal L}_{n,p}={\mathcal L}_n(\ZZ_p)$ for a prime number $p.$ 
  For a subset $S$ of $M_n(R)$ we denote by $S^{\rm nd}$ the subset of $S$
consisting of non-degenerate matrices. If $S$ is a subset of $S_n({\RR})$ with ${\RR}$ the field of real numbers, we denote by $S_{>0}$ (resp. $S_{\ge 0}$) the subset of $S$
consisting of positive definite (resp. semi-positive definite) matrices. 
$GL_n(R)$ acts on the set $S_n(R)$ in the following way: $GL_n(R) \times S_n(R) \ni (g,A) \longmapsto {}^tg Ag \in S_n(R).$
Let $G$ be a subgroup of $GL_n(R).$ For a subset ${\mathcal B}$ of $S_n(R)$ stable under the action of $G$ we denote by ${\mathcal B}/G$ the set of equivalence classes of ${\mathcal B}$ with respect to $G.$ We sometimes identify ${\mathcal B}/G$ with a complete set of representatives of ${\mathcal B}/G.$ We abbreviate ${\mathcal B}/GL_n(R)$ as ${\mathcal B}/\sim$ if there is no fear of confusion. Two symmetric matrices $A$ and $A'$ with
entries in $R$ are said to be equivalent over $R'$ with each
other and write $A \sim_{R'} A'$ if there is
an element $X$ of $GL_n(R')$ such that $A'=A[X].$ We also write $A \sim A'$ if there is no fear of confusion. 
For square matrices $X$ and $Y$ we write $X \bot Y = \begin{pmatrix} X & O\\ O &Y \end{pmatrix}$.

For an integer $D \in \ZZ$ such that $D \equiv 0$ or $1 \ {\rm mod} \ 4,$ let ${\textfrak d}_D$ be the discriminant of $\QQ(\sqrt{D}),$ and put ${\textfrak f}_D= \sqrt{\frac{D}{{\frkd}_D}}.$ We call an integer $D$ a fundamental discriminant if it is the discriminant of some quadratic extension of $\QQ$ or $1$. 
For $d \in \QQ^{\times} \cap \ZZ$, we denote by $\Bigl(\frac{d}{*} \Bigr)$ the Dirichlet character corresponding to the extension $\QQ(\sqrt{d})/\QQ$. Here we make the convention that  $\Bigl(\frac {d}{*} \Bigr)=1$ if $d \in ({\QQ^{\times}})^2.$

We put ${\bf e}(x)=\exp(2 \pi i x)$ for $x \in {\CC}.$ For a prime number $p$ we denote by $\nu_p(*)$ the additive valuation of $\QQ_p$ normalized so that $\nu_p(p)=1,$ and by ${\bf e}_p(*)$ the continuous additive character of $\QQ_p$ such that ${\bf e}_p(x)= {\bf e}(x)$ for $x \in {\ZZ}[p^{-1}].$

\section{Preliminaries}

In this section we review $L$-functions attached to modular forms, Rankin-Selberg convolutions of Siegel modular forms, and the Duke-Imamoglu-Ikeda lift.

\subsection{Siegel modular forms}
Put $J_n=\begin{pmatrix}O_n&-1_n\\1_n&O_n\end{pmatrix}$, where $1_n$ and $O_n$ denotes the unit matrix and the zero matrix of degree $n$, respectively. 
 Furthermore, put 
$$\varGamma^{(n)}=Sp_n({\ZZ})=\{M \in GL_{2n}({\ZZ})   \ | \  J_n[M]=J_n \}.
$$
Let ${\Bbb H}_n$ be Siegel's
upper half-space of degree $n$. We define $j(\gamma,Z)=\det (CZ+D)$ for $\gamma = \begin{pmatrix} A & B \\ C & D \end{pmatrix}$ and $Z \in {\Bbb H}_n$. We note that $\varGamma^{(1)}=SL_2(\ZZ)$. Let $l$ be an integer or a half-integer. For a congruence subgroup $\varGamma$ of $\varGamma^{(n)}$, we denote by $M_{l}(\varGamma)$ the space of Siegel modular forms of weight $l$ with respect to $\varGamma$, and by $S_{l}(\varGamma)$ its subspace consisting of cusp forms.
 For two holomorphic Siegel cusp forms $F$ and $G$ of weight $l$ for  $\varGamma$, we define the Petersson product by 
$$\langle F,G \rangle=\int_{\varGamma \backslash {\Bbb H}_n} F(Z)\overline {G(Z)} (\det Y)^l d^*Z,$$
where $Y=\mathrm{Im}(Z)$ and $d^*Z$ denotes the invariant volume element on ${\Bbb H}_n$ defined by
$d^*Z=(\det Y)^{-n-1} dZ$.
 We call $\langle F, F \rangle$ the period of $F.$ 

\subsection{$L$-functions attached to modular forms}

For $f(z)=\sum_{m=1}^{\infty} c_{f}(m) {\bf e}(mz)$ be a primitive form in $S_{k}(SL_2(\ZZ))$, and for any prime number $p$, let 
$\alpha_p=\alpha_{f}(p)\in \CC^\times$ such that
\[c_{f}(p)=p^{\frac {k-1}2}(\alpha_{f}(p)+\alpha_{f}(p)^{-1}).\]
  Then  for a Dirichlet character $\chi$, we  define the Hecke $L$-function $L(s,f,\chi)$ twisted by $\chi$  as $L(s,f,\chi)=\sum_{m=1}^{\infty} c_{f}(m)\chi(m) m^{-s}$. It can be written as 
  $$L(s,f,\chi)=\prod_p \bigl \{(1-\alpha_{f}(p)\chi(p) p^{\frac {k-1}2-s})(1-\alpha_{f}(p)^{-1} \chi(p)p^{\frac {k-1}2-s})\bigr\}^{-1}.
$$
We abbreviate $L(s,f,\chi)$ as $L(s,f)$ if $\chi$ is the principal character.
Moreover, we define the adjoint $L$-function $L(s,f, \mathrm {Ad})$  as 
\[L(s,f,\mathrm {Ad})=\prod_p \{(1-\alpha_f(p)^2 p^{-s})(1-\alpha_f(p)^{-2} p^{-s})(1-p^{-s}) \}^{-1}.\]

 For primitive forms $f_i\in S_{k_i}(SL_2(\ZZ))$, $i=1,2,3$, we define the tensor product $L$-function $L(s,f_1 \otimes f_2)$ and the triple product $L$-function $L(s,f_1 \otimes f_2 \otimes f_3)$ as 
\[L(s,f_1 \otimes f_2)=\prod_p \bigl\{\prod_{a,b =\pm 1} (1-p^{\frac {k_1+k_2}2-1-s} \alpha_{f_1}(p)^a\alpha_{f_2}(p)^b\bigr\}^{-1},\]
and 
\[L(s,f_1 \otimes f_2 \otimes f_3 )=\prod_p \bigl\{\prod_{a,b,c =\pm 1} (1-p^{\frac {k_1+k_2+k_3}2-\frac 32-s} \alpha_{f_1}(p)^a\alpha_{f_2}(p)^b\alpha_{f_3}(p)^c\bigr\}^{-1}.\]
Let $k_1 \ge k_2$ and put 
\[\call(s,f_1 \otimes f_2)=(2\pi)^{-2s}\Gamma(s)\Gamma(s-k_2+1)L(s,f_1 \otimes f_2).\]
Then
 $\call(s,f_1 \otimes f_2)$ is continued holomorphically to the whole $s$-plane and has the following functional equation
\begin{equation}\label{f-f-RS}
\call(k_1+k_2-1-s,f_1 \otimes f_2)=\call(s,f_1 \otimes f_2).
\end{equation}

\subsection{Rankin-Selberg convolution of Siegel modular forms}

  For $i=1,2$ let $F_i(Z) $ be an element of $S_{l_i}(\varGamma^{(n)}).$ Then $F_i(Z)$ has the following Fourier expansion:
$$F_i(Z)= \sum_{A \in {{\mathcal L}_{m}}_{>0}} a_{F_i}(A) {\bf e}({\rm tr}(AZ)).
$$
We define the Rankin-Selberg series $R(s,F_1,F_2)$ of $F_1$ and $F_2$ as 
$$R(s,F_1,F_2)=\sum_{A \in {{\mathcal L}_{m}}_{>0}/SL_{m}({\ZZ})} \frac{a_{F_1}(A) \overline{a_{F_2}(A)}}{e(A) (\det A)^s},
$$ 
where $e(A)=\#\{X \in SL_{m}({\ZZ}) \ | \ A[X]=A \}.$ We review the analytic properties of $R(s,F_1,F_2)$ following Kalinin \cite{Kal}. 
Put 
$$
\Gamma_{{\RR}}(s)=\pi^{-s/2}\Gamma(s/2),\quad \Gamma_{\CC}(s)=2(2\pi)^{-s} \Gamma(s),\quad \xi(s)= \Gamma_{{\RR}}(s)\zeta(s).
$$
Let $E_{n,l}(Z,s)$ be the Siegel-Eisenstein series defined by
\[E_{n,l}(Z,s)=(\det Y)^s \sum_{\gamma \in \varGamma^{(n)}_{\infty} \backslash \varGamma^{(n)}} j(\gamma,Z)^{-l}|j(\gamma,Z)|^{-2s},\]
where $\varGamma^{(n)}_\infty=\{ \begin{pmatrix} A&B\\O_n&D\end{pmatrix}\in \varGamma^{(n)}\}$.

\begin{proposition}
\label{prop.integral-rep-RS}
For $i=1,2$, let $F_i \in S_{l_i}(\varGamma)$ with $l_1 \ge l_2$.
Put
\[\gamma_{n}(s)=2^{1-2sn}\pi^{-sn+\frac{n(n-1)}{4}}\prod_{i=1}^{n}\frac{\Gamma(s+\frac{1}{2}(-i+1))\Gamma(s+\frac{1}{2}(n-2l_2+2-i))} {\Gamma(s+\frac{1}{2}(n-l_1-l_2+2-j))}.\]
Then for $Re(s)\gg 0$, we
 have
\begin{align*}
&R(s,F_1,F_2)=\gamma(s)^{-1} \int_{\varGamma^{(n)} \backslash {\Bbb H}_n} F_1(Z)\overline{F_2(Z)} E_{n,l_1-l_2}(s+\tfrac {n+1}2-l_1,Z) (\det Y)^{l_2} d^*Z.
\end{align*}
In particular, if $F_1=F_2$, then
\begin{align*}
&R(s,F_1,F_1)=\gamma(s)^{-1} \int_{\varGamma^{(n)} \backslash {\Bbb H}_n} |F_1(Z)|^2 E_{n,0}(s+ \tfrac {n+1}2-l_1,Z) (\det Y)^{l_1} d^*Z.
\end{align*}
\end{proposition}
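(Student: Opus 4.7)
This is the classical Rankin--Selberg integral representation for Siegel modular forms, and I would prove it by the unfolding method, following Kalinin \cite{Kal}. Set $s' = s + (n+1)/2 - l_1$ and substitute the defining series of the Eisenstein series into the right-hand side integral; in the region of absolute convergence, sum and integral may be interchanged. Using $F_i(\gamma Z) = j(\gamma,Z)^{l_i} F_i(Z)$ together with $\det \mathrm{Im}(\gamma Z) = |j(\gamma,Z)|^{-2} \det Y$, one verifies that the per-$\gamma$ contribution to the integrand is $\varGamma^{(n)}_\infty$-invariant, so the standard unfolding identity collapses the sum and reduces the integral to
\[
\int_{\varGamma^{(n)}_{\infty} \backslash \mathbb{H}_n} F_1(Z)\, \overline{F_2(Z)}\, (\det Y)^{l_2 + s'}\, d^*Z.
\]

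Next, parametrize $\varGamma^{(n)}_{\infty}\backslash\mathbb{H}_n$ by $(X,Y) \in (S_n(\RR)/S_n(\ZZ)) \times (S_n(\RR)_{>0}/GL_n(\ZZ))$, writing $Z = X + iY$. Expand $F_1 \overline{F_2}$ as a double Fourier series and integrate over $X$: orthogonality of additive characters selects the diagonal pairs $(A,A)$ and yields
\[
\sum_{A \in \mathcal{L}_n,\, A>0} a_{F_1}(A)\, \overline{a_{F_2}(A)}\, e^{-4\pi \mathrm{tr}(AY)}.
\]
Folding the $GL_n(\ZZ)$-action on $Y$ into the summation index $A$ converts this into a sum over $SL_n(\ZZ)$-orbits of positive definite $A \in \mathcal{L}_n$ (with the usual stabilizer factor $1/e(A)$, absorbing the index $[GL_n(\ZZ):SL_n(\ZZ)]=2$) each multiplied by the Siegel gamma integral
\[
\int_{S_n(\RR)_{>0}} (\det Y)^{l_2 + s' - n - 1}\, e^{-4\pi \mathrm{tr}(AY)}\, dY,
\]
which evaluates in closed form to a power of $\det(4\pi A)$ times $\pi^{n(n-1)/4}\prod_{i=0}^{n-1}\Gamma(l_2+s'-\tfrac{n+1}{2}-\tfrac{i}{2})$.

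Substituting $s' = s + (n+1)/2 - l_1$, the power of $\det A$ assembles into $(\det A)^{-s}$, matching the series definition of $R(s,F_1,F_2)$, and the product of gamma functions together with the accumulated powers of $2$ and $\pi$ reshapes into $\gamma_n(s)$ as written. The case $F_1 = F_2$ then follows by specialization with $l_1 = l_2$, so that the Eisenstein series reduces to $E_{n,0}$ and the integrand to $|F_1|^2 (\det Y)^{l_1}$. The main technical obstacle is the precise bookkeeping of constants: the measure factor $d^*Z = (\det Y)^{-n-1}\, dX\, dY$, the $GL_n(\ZZ)$--$SL_n(\ZZ)$ orbit/stabilizer corrections, the shift $s \mapsto s+(n+1)/2-l_1$, and the explicit powers of $2$, $\pi$, and $4\pi$ must all combine to reproduce exactly the product formula defining $\gamma_n(s)$; each individual step is routine, but matching the final normalization requires patience.
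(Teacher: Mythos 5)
Your unfolding argument is the standard proof of this integral representation and is exactly the route the paper intends: the paper states the proposition without proof, citing Kalinin \cite{Kal}, whose argument is precisely the unfolding, $X$-integration, and Siegel gamma-integral computation you outline. One bookkeeping caveat: with the normalization as literally printed (the factor $(\det Y)^{l_2}$ together with the shift $s+\tfrac{n+1}{2}-l_1$) the $Y$-integral yields $(\det A)^{-(s-l_1+l_2)}$ rather than $(\det A)^{-s}$, so your final assembly only closes up after correcting what is evidently a typo in the statement --- the $\varGamma^{(n)}$-invariant integrand requires $(\det Y)^{l_1}$ (equivalently, the shift $s+\tfrac{n+1}{2}-l_2$) --- as is confirmed by the self-consistent $F_1=F_2$ case and by the factor $\prod_{i=1}^{n}\Gamma\bigl(s+\tfrac{1}{2}(-i+1)\bigr)$ in $\gamma_n(s)$, which is the Siegel gamma function evaluated at $s$ itself.
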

\begin{proposition}
\label{prop.fc-RS}
  Put 
$${\mathcal R}(s,F_1,F_2)=\gamma_{n}(s)\xi(2s+n+1-l_1-l_2)\prod_{i=1}^{[n/2]} \xi(4s+2n+2-2l_1-2l_2-2i)R(s,F_1,F_2).
$$
Suppose that $l_1 \ge l_2$. Then the following assertions hold: 
\begin{enumerate}
\item[{\rm (1)}] 
${\mathcal R}(s,F_1,F_2)$ has a holomorphic continuation to the whole $s$-plane with the possible exception of poles of finite order at $\frac {l_1+l_2}2-\frac j4$ for $j=0,1,...,2n+2$, and has the following functional equation:
$${\mathcal R}(l_1+l_2-(n+1)/2-s,F_1,F_2)={\mathcal R}(s,F_1,F_2).$$
\item[{\rm (2)}] Assume that $l_1=l_2=l$. Then ${\mathcal R}(s,F_1,F_2)$ is holomorphic for ${\rm Re}(s)>l,$  and has a simple pole at $s=l$ with the residue $\prod_{i=1}^{[n/2]}\xi(2i+1)\langle F_1,F_2 \rangle.$
\end{enumerate}
\end{proposition}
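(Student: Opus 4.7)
The plan is to deduce both assertions from the integral representation in Proposition~\ref{prop.integral-rep-RS}, which reduces the analytic behavior of $R(s,F_1,F_2)$ to that of the Siegel--Eisenstein series $E_{n,l_1-l_2}(Z,s')$ at $s'=s+(n+1)/2-l_1$. First, one invokes the standard meromorphic continuation and functional equation of $E_{n,k}(Z,s')$ (due to Langlands, with the explicit shape needed here worked out by Kalinin~\cite{Kal}): after multiplication by the archimedean $\Gamma$-factors that together comprise $\gamma_n(s)$ in Proposition~\ref{prop.integral-rep-RS}, and by the normalizing zeta factor $\xi(2s'+k)\prod_{i=1}^{[n/2]}\xi(4s'+2k-2i)$, one obtains a completed Eisenstein series $E^{*}_{n,k}(Z,s')$ which is meromorphic on $\CC$ with poles of finite order in an explicit finite set, and satisfies a symmetric functional equation $s'\mapsto (n+1)/2-k-s'$.

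Substituting $s'=s+(n+1)/2-l_1$ turns this symmetric point into $s\mapsto l_1+l_2-(n+1)/2-s$, and converts the normalizing $\xi$-factors of $E^{*}_{n,k}$ into precisely the zeta factors $\xi(2s+n+1-l_1-l_2)\prod_{i=1}^{[n/2]}\xi(4s+2n+2-2l_1-2l_2-2i)$ appearing in the definition of ${\mathcal R}(s,F_1,F_2)$. Since $F_1,F_2$ are cusp forms, the integral against $E^{*}_{n,l_1-l_2}(Z,s')(\det Y)^{l_2}$ converges absolutely and is holomorphic in $s'$ away from the poles of $E^{*}_{n,l_1-l_2}$; thus ${\mathcal R}(s,F_1,F_2)$ inherits both the meromorphic continuation and the functional equation, and the possible pole set $s=(l_1+l_2)/2-j/4$, $0\le j\le 2n+2$, is obtained by translating the known pole set of $E^{*}_{n,k}$ by $l_1-(n+1)/2$. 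This proves (1).

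For (2), with $l_1=l_2=l$ and $k=0$, the value $s=l$ corresponds to $s'=(n+1)/2$; there $E_{n,0}(Z,s')$ has a simple pole whose residue is the constant function on ${\Bbb H}_n$ given by Siegel's residue formula. Integrating yields
\[\mathrm{Res}_{s=l}\,{\mathcal R}(s,F_1,F_2)=c_n\int_{\varGamma^{(n)}\backslash{\Bbb H}_n}F_1(Z)\overline{F_2(Z)}(\det Y)^{l}\,d^{*}Z=c_n\langle F_1,F_2\rangle,\]
where the constant $c_n$, obtained by combining the Eisenstein residue with $\gamma_n(l)^{-1}$ and the normalizing $\xi$-factors, simplifies after explicit calculation to $\prod_{i=1}^{[n/2]}\xi(2i+1)$; holomorphy of ${\mathcal R}$ for $\mathrm{Re}(s)>l$ is clear because then $\mathrm{Re}(s')>(n+1)/2$ lies to the right of all singularities of the completed Eisenstein series. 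The main obstacle in the whole argument is the bookkeeping: verifying that $\gamma_n(s)$ is exactly the archimedean factor needed to match the $\Gamma$-function appearing in the Langlands functional equation under the shift $s'=s+(n+1)/2-l_1$, and that the residue constant at $s=l$ collapses to the clean expression $\prod_{i=1}^{[n/2]}\xi(2i+1)$; both reduce to tedious but routine manipulations with $\Gamma$- and $\xi$-factors.
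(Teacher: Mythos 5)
Your proposal is correct and follows essentially the same route the paper relies on: the paper states this proposition as a review of Kalinin's results, and Kalinin's proof is exactly the unfolding argument you describe — the integral representation of Proposition \ref{prop.integral-rep-RS} combined with the meromorphic continuation, functional equation $s'\mapsto (n+1)/2-(l_1-l_2)-s'$, and residue of the completed Siegel--Eisenstein series, with your substitution $s'=s+(n+1)/2-l_1$ correctly producing the symmetry $s\mapsto l_1+l_2-(n+1)/2-s$ and the stated $\xi$-factors. The remaining bookkeeping you flag (matching $\gamma_n$ and evaluating the residue constant as $\prod_{i=1}^{[n/2]}\xi(2i+1)$) is indeed routine and consistent with the residue formula used later in the paper.
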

\begin{remark}
There is a typo in \cite{Kal}: `$\xi(4s+2n-k_1-k_2+2-2j)$' on page 195, line 5 should be `$\xi(4s+2n-2k_1-2k_2+2-2j)$'.
\end{remark}

\subsection{Review of the Duke-Imamoglu-Ikeda lift} 

For an element $a \in \QQ_p^{\times}$, we define $\chi_p(a)$ as
\[\chi_p(a)=\begin{cases} 1 & \text{ if } \QQ_p(\sqrt{a})=\QQ_p, \\
-1 & \text{ if } \QQ_p(\sqrt{a})/\QQ_p \text{ is unramified quadratic}, \\
0 & \text{ if } \QQ_p(\sqrt{a}/\QQ_p \text{ is ramified quadratic}. \end{cases}\]
 Let $T \in \call_{n,p}^\nd$ with $n$ even, let $\frkd_T$ the discriminant of $\QQ_p(\sqrt{(-1)^{n/2} \det T})/\QQ_p$, and
 $\xi_p(T)=\chi_p((-1)^{n/2} \det T).$ 
 Put
$\frke_T=( \nu_p(2^n \det T)-\nu_p(\frkd_T) )/2$.
For each $T \in {\mathcal L}_{n,p}^{\nd}$ we define the local Siegel series $b_p(T,s)$ and the primitive local Siegel series $b_p^*(T,s)$ by 
  $$b_p(T,s)=\sum_{R \in S_n(\QQ_p)/S_n({\ZZ}_p)} {\bf e}_p({\rm tr}(TR))p^{-\nu_p(\mu_p(R))s},$$  and 
  $$b_p^*(T,s)=\sum_{i=0}^{n} (-1)^i p^{i(i-1)/2} p^{(-2s+n+1)i} \sum_{D \in GL_{n}({\ZZ}_p) \backslash {\mathcal D}_{n,i}} b_p(T[D^{-1}],s), $$
  where $\mu_p(R)=[R{\ZZ}_p^n+{\ZZ}_p^n:{\ZZ}_p^n],$ and ${\mathcal D}_{n,i}=GL_n({\ZZ}_p) \begin{pmatrix}1_{n-i} & O \\ O & p1_i \end{pmatrix} GL_n({\ZZ}_p)$ for $i=0,1,\ldots,n$. 
 We remark that there exists a unique polynomial 
 $F_p(T,X)$ in $X$ such that 
 \begin{align*}
b_p(T,s)&=F_p(T,p^{-s})(1-p^{-s})\frac{\prod_{i=1}^{n/2} (1-p^{2i-2s})  }{1-\xi_p(T)p^{n/2-s}}  
\end{align*}
(cf. Kitaoka \cite{Ki1}). 
We also have
\begin{align*}
b_p^*(T,s)&=G_p(T,p^{-s})(1-p^{-s})\frac{\prod_{i=1}^{n/2} (1-p^{2i-2s})}{1-\xi_p(T)p^{n/2-s}},
\end{align*}
where $G_p(T,X)$ is a polynomial defined by
\begin{eqnarray*}
G_p(T,X)=\sum_{i=0}^{n} (-1)^i p^{i(i-1)/2} (X^2p^{n+1})^i \sum_{D \in GL_{n}({\ZZ}_p) \backslash {\mathcal D}_{n,i}} F_p(T[D^{-1}],X). 
\end{eqnarray*}
We  define a polynomial $\widetilde F_p(T,X)$ in $X$ and $X^{-1}$ as
$$\widetilde F_p(B,X)=X^{-\frke_p(T)}F_p(T,p^{-(n+1)/2}X).$$

We remark that  $\widetilde{F}_p(B,X^{-1})=\widetilde{F}_p(B,X)$ if $n$ is even (cf. \cite{Kat1}). \vspace*{2mm}
Let $T$ be an element of ${\call_n}_{>0}$ with $n$ even. Let $\frkd_T$ be the discriminant of $\QQ(\sqrt{(-1)^{n/2} \det (T)})/\QQ$. Then  
we have $(-1)^{n/2} \det (2T)/\frkd_T=\frkf_T^2$ with $\frkf_T \in \ZZ_{>0}$.
Now let $k$ be a positive even integer, and 
$\varGamma_0(4)=\Bigl\{\bigl( \begin{smallmatrix} a & b \\ c & d \end{smallmatrix} \bigr) \in Sl_2(\ZZ) \ | \ c \equiv 0 \text{ mod 4} \bigr\}$. 
Let 
 $$h(z)=\sum_{m \in {\ZZ}_{>0} \atop (-1)^{n/2}m \equiv 0, 1 \ {\rm mod} \ 4 }c_h(m){\bf e}(mz)
$$
  be a Hecke eigenform in the Kohnen plus space $S_{k-n/2+1/2}^+(\varGamma_0(4))$ and $f(z)=\sum_{m=1}^{\infty}c_f(m){\bf e}(mz)$ be 
 the primitive form in $S_{2k-n}(SL_2(\Bbb Z))$ corresponding to $h$ under the Shimura correspondence (cf. Kohnen \cite{Ko}).
We define a Fourier series $I_n(h)(Z)$ in $Z \in {\Bbb H}_n$ by
$$I_n(h)(Z)= \sum_{T \in {{\mathcal L}_n}_{> 0}} c_{I_n(h)}(T){\bf e}({\rm tr}(TZ)),\quad c_{I_n(h)}(T)=c_h(|{\textfrak d}_T|) {\textfrak f}_T^{k-n/2-1/2} \prod_p\widetilde F_p(T,\alpha_f(p)).
$$ 
Then Ikeda \cite{Ik1} showed that $I_n(h)(Z)$ is a Hecke eigenform in $S_k(\varGamma^{(n)})$ whose
standard $L$-function coincides with $\zeta(s)\prod_{i=1}^n L(s+k-i,f).$

We call $I_n(h)$ the Duke-Imamoglu-Ikeda lift (D-I-I lift for short) of $h$.

The first named author and Kawamura \cite{KK15} proved the conjecture on the period of the D-I-I lift proposed by Ikeda \cite{Ik2}.  Our result can be written as follows by using the fact that
\[L(s,f \otimes f)=L(s-2k+n+1,f, \mathrm{Ad})\zeta(s-2k+n+1).\]

\begin{theorem} \cite{KK15}
\label{th.Ikeda-conjecture}
We have 
\begin{align*}
\frac{ \langle I_n(h),\, I_n(h) \rangle}{ \langle h,\, h \rangle }=a_n2^{-2kn+4k}\pi^{-kn+k}\Gamma(k)L(k,f)\prod_{i=1}^{\tfrac n2-1} \Gamma(2k-2i)L(2k-2i,f \otimes f),
\end{align*}
with $a_n$ a non-zero constant depending only on $n$.
\end{theorem}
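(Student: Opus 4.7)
The plan is to follow the strategy of \cite{KK15}: recover $\langle I_n(h), I_n(h) \rangle$ as the residue at $s = k$ of the completed Rankin--Selberg series $\mathcal R(s, I_n(h), I_n(h))$ via Proposition~\ref{prop.fc-RS}(2), and compute this residue explicitly from the Ikeda formula for the Fourier coefficients of $I_n(h)$.

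Writing out the Rankin--Selberg series using
\[c_{I_n(h)}(T) = c_h(|\frkd_T|)\,\frkf_T^{k-n/2-1/2}\prod_p \widetilde F_p(T, \alpha_f(p))\]
gives
\[R(s, I_n(h), I_n(h)) = \sum_{T \in {\mathcal L}_{n,>0}/SL_n(\ZZ)} \frac{|c_h(|\frkd_T|)|^2\,\frkf_T^{2k-n-1}\prod_p |\widetilde F_p(T,\alpha_f(p))|^2}{e(T)(\det T)^s}.\]
The first step is to split the sum by the fundamental discriminant $D=\frkd_T$ and factor the inner sum through local equivalence classes $\call_{n,p}^{\nd}/GL_n(\ZZ_p)$ of prescribed local discriminant. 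For each prime $p$, the resulting local zeta series built from $|\widetilde F_p(\cdot, \alpha_f(p))|^2$ is a rational function in $p^{-s}$ and in the Satake parameters $\alpha_f(p)^{\pm 1}$, computable by Kitaoka's recursion \cite{Ki1} for the Siegel series together with the $X \leftrightarrow X^{-1}$ symmetry of $\widetilde F_p$ (for even $n$). Reassembling the local factors globally identifies $R(s, I_n(h), I_n(h))$ as a product of $\prod_{i=1}^{n/2-1} L(\,\cdot\,, f \otimes f)$, zeta factors, and the ``diagonal'' Dirichlet series $D_h(s) = \sum_D |c_h(|D|)|^2 |D|^{-s}$.

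The second step is to compute the residue of $D_h(s)$ at its rightmost pole via the Rankin--Selberg method for the half-integral weight form $h$: a Waldspurger--Kohnen--Zagier type identity expresses $|c_h(|D|)|^2$ in terms of the central twisted $L$-value $L(k, f, \chi_D)$ and $\langle h, h \rangle$, and Rankin's convolution against the Cohen-type Eisenstein series picks out the residue proportional to $\langle h, h \rangle L(k, f)$. Equating this with the residue from Proposition~\ref{prop.fc-RS}(2), and rewriting via $L(s, f\otimes f) = L(s-2k+n+1, f, \mathrm{Ad})\zeta(s-2k+n+1)$ for final presentation, produces the stated formula; the constant $a_n$ absorbs the $\xi$-values and the archimedean $\Gamma$, $2$, and $\pi$ factors arising in Propositions~\ref{prop.integral-rep-RS}--\ref{prop.fc-RS}.

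The main obstacle is the explicit identification of the local zeta series built from $|\widetilde F_p(\cdot,\alpha_f(p))|^2$ as a ratio of Euler factors of $L(\,\cdot\,, f \otimes f)$ to zeta factors, uniformly in the local discriminant class. The bookkeeping is delicate and becomes substantially more involved at $p = 2$, where the half-integral structure on $\call_{n,2}$ produces additional orbits under $GL_n(\ZZ_2)$; this Kitaoka-type local computation is the technical heart of \cite{KK15}.
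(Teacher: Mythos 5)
This theorem is not proved in the present paper: it is quoted from \cite{KK15}, and, as the introduction explains, the proof there proceeds by computing the Rankin--Selberg convolution of a certain \emph{half-integral weight} Siegel modular form related to $I_n(h)$ (of degree $n-1$, in the generalized Kohnen plus space), for which the local series close up into well-known Dirichlet series and $L$-functions; the period of $I_n(h)$ is then recovered from that of the related form. Your proposal instead computes $R(s,I_n(h),I_n(h))$ for the D-I-I lift itself and extracts $\langle I_n(h),I_n(h)\rangle$ from the residue at $s=k$ via Proposition~\ref{prop.fc-RS}(2). That is precisely the computation carried out in the present paper (Theorem~\ref{th.explicit-RS}), and it does \emph{not} deliver what you claim in your second step.

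The gap is your assertion that the local factors ``reassemble globally'' into known $L$-functions times the diagonal series $\sum_{D}|c_h(|D|)|^2|D|^{-s}$. The local zeta series $H_{n,p}(d_0,\omega,X,Y,t)$ depend on the fundamental discriminant $d_0$ through $\chi_p(d_0)=\bigl(\frac{d_0}{p}\bigr)$ via the nontrivial polynomial $L_p(\xi;\dots)$ of (\ref{L}), so the sum over $d_0$ does not separate from the Euler product. What one actually obtains is the triple-convolution-like series $D(s,h,h,E_{n/2+1/2})$ of Definition~(\ref{D}) (an infinite sum of degree-$10$ Euler products), not a diagonal series amenable to a Kohnen--Zagier plus Rankin argument. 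Consequently the residue you need at $s=k$ is $\mathrm{Res}_{s=k}D(s,h,h,E_{n/2+1/2})$, and in this paper that residue is \emph{deduced from} Theorem~\ref{th.Ikeda-conjecture} together with Theorem~\ref{th.explicit-RS} (see the proof of Theorem~\ref{th.main-result}(2)); the paper even records as an open question whether $\cald(s,h,h,E_{n/2+1/2})$ has the expected holomorphy. So as written your argument either rests on a false factorization or, once corrected, becomes circular. To make the strategy work one must replace $I_n(h)$ by the associated half-integral weight Siegel form as in \cite{KK15}, for which the analogue of your factorization genuinely holds.
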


\section{Triple convolution product}
For $i=1,2,3$, let $h_i$
be a Hecke eigenform in the Kohnen plus space $S^+_{l_i+1/2}(\varGamma_0(4))$ of weight $l_i+1/2$ for $\varGamma_0(4),$ and 
$f_i$
be the primitive form in $S_{2l_i}(SL_2({\ZZ}))$ corresponding to $h_i$ under the Shimura correspondence. 
For a prime number $p$ and $\xi=0,{\pm 1}$, we define a polynomial $L_p(\xi;X_1,X_2,X_3,t)$ in $X_1,X_2,X_3$ and $t$ as 

\begin{eqnarray}\label{L}
&&L_p(\xi;X_1,X_2,X_3,t)=1+t\{-\xi p^{-1/2}( S_2+2)+(1+\xi^2 p^{-1})S_1-p^{-3/2}\xi\}\\ 
&&\phantom{xxxxxx} +t^2\{p^{-1}\xi^2(S_1^2-S_2-2)-\xi p^{-1/2}(S_1+S_3)-\xi p^{-3/2}S_1\} \nonumber\\
&&\phantom{xxxxxx} +t^3\{
\xi p^{-1/2}(S_1^2-S_2-2)-S_1-\xi^2p^{-1}(S_1+S_3)\}\nonumber \\
&&\phantom{xxxxxx} +t^4(-\xi^2p^{-1}(S_2+2)+\xi p^{-1/2}(1+ p^{-1})S_1-1\}+\xi t^5p^{-3/2},\nonumber
\end{eqnarray}
where $S_i=S_i(X_1,X_2,X_3)$ is the $i$-th elementary symmetric polynomial of $X_1,X_2,X_3$.
This is a polynomial in $t$ of degree at most $5.$ 
Suppose that $l_1 \ge l_2 \ge l_3$. We then define a Dirichlet series $D(s,h_1,h_2,h_3)$ as 
\begin{eqnarray}\label{D} 
&& D(s,h_1,h_2,h_3) =L(s,f_1 \otimes f_2 \otimes f_3) \sum_{d_0} c_{h_1}(|d_0|)\overline{c_{h_2}(|d_0|)}c_{h_3}(|d_0|)|d_0|^{-s} \\
&& \phantom{xxxxxxxxxxx} \times \prod_p L_p(\Bigl(\frac{d_0} p \Bigr);\widetilde c_{f_1}(p),\widetilde c_{f_2}(p),\widetilde c_{f_3}(p), p^{-2s+l_1+l_2+l_3-3/2}),\nonumber
\end{eqnarray}
where $d_0$ runs over all fundamental discriminants, and $\widetilde c_{f_i}(p)=p^{-l_i+1/2}c_{f_i}(p)$ for $i=1,2,3$.
By the estimate of the Fourier coefficients of integral and half-integral weight modular forms, this Dirichlet series is absolutely convergent if $Re(s)\gg 0$.
It is similar to the triple-convolution Dirichlet series $L(s;h_1,h_2,h_3)$ defined as 
\begin{align*}
L(s;h_1,h_2,h_3)=\sum_{m=1}^{\infty}\frac {c_{h_1}(m)\overline {c_{h_2}(m)} c_{h_3}(m)}{m^s}.
\end{align*}
Indeed, $L(s;h_1,h_2,h_3)$ can be expressed as
\begin{eqnarray*} 
&& L(s,h_1,h_2,h_3) =L(s,f_1 \otimes f_2 \otimes f_3)\times \sum_{d_0} c_{h_1}(|d_0|)\overline{c_{h_2}(|d_0|)}c_{h_3}(|d_0|)|d_0|^{-s} \\
&&\phantom{xxxxxxxxxxx} \times \prod_p M_p(\Bigl(\frac{d_0}p\Bigr);\widetilde c_{f_1}(p),\widetilde c_{f_2}(p),\widetilde c_{f_3}(p), p^{-2s+l_1+l_2+l_3-3/2}),
\end{eqnarray*}
where $M_p(\Bigl(\frac{d_0}p\Bigr);X,Y,Z, t)$ is a polynomial in $X,Y,Z$ and $t$ determined by $\Bigl(\frac{d_0} *\Bigr)$ but is not the same as 
$L_p(\Bigl(\frac{d_0} p \Bigr);X,Y,Z,t)$ in general.

For a proof, recall the following identity: For $d_0$ a fundamental discriminant, 
\begin{equation*}
c_{h_i}(m^2|d_0|)=c_{h_i}(|d_0|) \sum_{a|m} \mu(a) \Bigl(\frac{d_0} a\Bigr) a^{l_i-1} c_{f_i}(m a^{-1}),
\end{equation*}
where $\mu$ is the M\"obius function.
Now we use the fact that any integer can be written as $m^2 d_0$ for a fundamental discriminant $d_0$. Then

\begin{eqnarray*}
 L(s,h_1,h_2,h_3) =\sum_{d_0} c_{h_1}(|d_0|)\overline{c_{h_2}(|d_0|)}c_{h_3}(|d_0|)|d_0|^{-s} \sum_{m=1}^\infty A_1(m)A_2(m)A_3(m)m^{-2s},
\end{eqnarray*}
where $A_i(m)=\sum_{a|m} \mu(a) \Bigl(\frac{d_0} a\Bigr)a^{l_i-1} c_{f_i}(ma^{-1}).$ By using the fact that $A_i(m)$ is multiplicative, we have 
\begin{eqnarray*}
 L(s,h_1,h_2,h_3) =\sum_{d_0} c_{h_1}(|d_0|)\overline{c_{h_2}(|d_0|)}c_{h_3}(|d_0|)|d_0|^{-s} \prod_p \sum_{m=0}^\infty \prod_{i=1}^3 \Big(c_{f_i}(p^m)-c_{f_i}(p^{m-1}) p^{l_i-1}\Bigl(\frac{d_0} p\Bigr)\Big) p^{-2ms},
\end{eqnarray*}
where we used the convention that $c_{f_i}(p^{-1})=0$.
Use the fact that 
$$c_{f_i}(p^m)=\frac {\alpha_{f_i}(p)^{m+1}-\alpha_{f_i}(p)^{-m-1}}{\alpha_{f_i}(p)-\alpha_{f_i}(p)^{-1}} p^{m(l_i-\tfrac 12)}.
$$
Then  
\begin{eqnarray*}
&& \sum_{m=0}^\infty \Big(c_{f_i}(p^m)-c_{f_i}(p^{m-1}) p^{l_i-1}\Bigl(\frac{d_0} p\Bigr) p^{-2ms} \\
&& =\sum_{m=0}^\infty \left(\frac {\alpha_{f_i}(p)^{m+1}-\alpha_{f_i}(p)^{-m-1} -\Bigl(\frac{d_0} a
p \Bigr)p^{-\tfrac 12}(\alpha_{f_i}(p)^m-\alpha_{f_i}(p)^{-m})}{\alpha_{f_i}(p)-\alpha_{f_i}(p)^{-1}}\right) p^{m(-2s+l_i-\tfrac 12)}.
\end{eqnarray*} 

Our result follows from the following lemma, which can be easily checked.

\begin{lemma} For $i=1,2,3$ and $j=1,2$, let $\alpha_{ij}\in\Bbb C$ and $a_i=0,1$. Then 
$$\sum_{m=0}^\infty \prod_{i=1}^3 \frac {\alpha_{i1}^{m+a_i}-\alpha_{i2}^{m+a_i}}{\alpha_{i1}-\alpha_{i2}} t^m
=\frac {M_{a_1,a_2,a_3}(\alpha_{11}+\alpha_{12}, \alpha_{21}+\alpha_{22}, \alpha_{31}+\alpha_{32},t)}{\prod_{a,b,c=1,2} (1-\alpha_{1a}\alpha_{2b}\alpha_{3c} t)}.
$$
\end{lemma}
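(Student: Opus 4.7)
The plan is a direct, largely bookkeeping calculation: expand each factor on the left, sum the resulting geometric series in $t$, combine eight fractions over a common denominator, and identify the numerator.

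First, write each factor as
$$\frac{\alpha_{i1}^{m+a_i}-\alpha_{i2}^{m+a_i}}{\alpha_{i1}-\alpha_{i2}}= \frac{1}{\alpha_{i1}-\alpha_{i2}}\sum_{j_i=1,2}(-1)^{j_i-1}\alpha_{ij_i}^{m+a_i}.$$
Multiplying the three factors, distributing, and interchanging the order of summation gives
$$\text{LHS}=\frac{1}{\prod_i(\alpha_{i1}-\alpha_{i2})}\sum_{(j_1,j_2,j_3)\in\{1,2\}^3}(-1)^{\sum_i(j_i-1)}\Bigl(\prod_i\alpha_{ij_i}^{a_i}\Bigr)\sum_{m=0}^{\infty}(\alpha_{1j_1}\alpha_{2j_2}\alpha_{3j_3}t)^m.$$
Each inner geometric series evaluates to $(1-\alpha_{1j_1}\alpha_{2j_2}\alpha_{3j_3}t)^{-1}$; placing the resulting eight fractions over the common denominator $\prod_{(a,b,c)\in\{1,2\}^3}(1-\alpha_{1a}\alpha_{2b}\alpha_{3c}t)$ yields a rational function of the shape asserted in the statement.

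The remaining task is to verify that the numerator depends polynomially only on $e_i:=\alpha_{i1}+\alpha_{i2}$ and $t$. The key observation is that the signed sum obtained above, before division by $\prod_i(\alpha_{i1}-\alpha_{i2})$, is antisymmetric under each single transposition $\alpha_{i1}\leftrightarrow\alpha_{i2}$: such a swap flips exactly the factor $(-1)^{j_i-1}$ while merely re-indexing the remaining contributions. Hence that sum is divisible by each $\alpha_{i1}-\alpha_{i2}$, and the quotient $N$ is a polynomial in $\alpha_{ij}$ and $t$ that is symmetric in each pair $\alpha_{i1},\alpha_{i2}$. By the fundamental theorem on symmetric polynomials applied pairwise, $N\in\mathbb{Z}[e_i,p_i,t]$ with $p_i=\alpha_{i1}\alpha_{i2}$. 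In the intended application the $\alpha_{ij}$ are Satake parameters of primitive forms, so $p_i=1$ and $N$ collapses to the required polynomial $M_{a_1,a_2,a_3}(e_1,e_2,e_3,t)$.

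The only real obstacle is sign bookkeeping: one has to confirm that the eight signed terms combine in precisely the way predicted by the pairwise antisymmetry, and that the apparent poles at $\alpha_{i1}=\alpha_{i2}$ genuinely cancel. Once the antisymmetry of the signed sum under each transposition is recognized, this cancellation is automatic and no convergence issue arises, since the entire manipulation is valid as an identity of formal power series in $t$.
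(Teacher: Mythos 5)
Your proof is correct. The paper gives no argument for this lemma at all (it is dismissed with ``which can be easily checked''), and your verification --- splitting each factor into two signed powers, summing eight geometric series, recombining over the common denominator, and using antisymmetry under each swap $\alpha_{i1}\leftrightarrow\alpha_{i2}$ to divide out $\prod_i(\alpha_{i1}-\alpha_{i2})$ --- is the standard computation the authors presumably intend. One substantive point in your write-up deserves emphasis: as you note, the resulting numerator $N$ lies in $\mathbb{Z}[e_1,e_2,e_3,p_1,p_2,p_3,t]$ with $e_i=\alpha_{i1}+\alpha_{i2}$ and $p_i=\alpha_{i1}\alpha_{i2}$, and it genuinely involves the $p_i$ in general (already the two-alphabet analogue with $a_1=a_2=1$ has numerator $1-p_1p_2t^2$). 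Hence the lemma as literally stated for arbitrary $\alpha_{ij}\in\mathbb{C}$, with $M_{a_1,a_2,a_3}$ a function of the sums and $t$ alone, is imprecise; it is your restriction to $\alpha_{i1}\alpha_{i2}=1$ --- which holds in the application, where $\alpha_{i2}=\alpha_{i1}^{-1}$ are Satake parameters --- that collapses $N$ to $M_{a_1,a_2,a_3}(e_1,e_2,e_3,t)$. Your proof therefore both supplies the missing argument and implicitly corrects the statement.
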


\smallskip

Let $l\geq 2$ be a positive integer. For a nonnegative integer $m$, we define the Cohen function $H(l,m)$ as 
$$H(l,m) =\begin{cases}
\zeta(1-2l), &\text{if $m=0$,} \\
L(1-l,\Bigl(\frac{(-1)^l m}*\Bigr)), & \text{if $m>0$, and  \ {\rm and} \ $(-1)^l m$ is a fundamental discriminant,}\\
0, & {\rm otherwise,}
\end{cases}
$$
where  $L(s, \Bigl(\frac{(-1)^l m}*\Bigr))$ is the Dirichlet L-function associated to $\Bigl(\frac{(-1)^l m}*\Bigr).$ We then define the Cohen Eisenstein series $E_{l+1/2}(z)$ 
 by
 $$E_{l+1/2}(z)=\sum_{m=0}^{\infty}H(l,m) {\bf e}(mz).
$$

It is known that $E_{l+1/2}(z)$ belongs to $M_{l+1/2}^+(\varGamma_0(4))$ and that the Eisenstein series $G_{2l}(z)$ of weight $2l$ with respect to $SL_2(\ZZ)$ corresponds to $E_{l+1/2}(z)$ under the Shimura correspondence. In this case, $c_{E_{l+1/2}}(|d_0|)=L(1-l,\Bigl(\frac{d_0} * \Bigr))$ and $\widetilde c_{G_{2l}}(p)=p^{l-1/2}+ p^{1/2-l}$. Therefore $D(s,h_1,h_2,E_{l+1/2})$ is expressed as 
\begin{eqnarray*}
&D(s,h_1,h_2,E_{l+1/2})=\prod_p \prod_{a,b =\pm 1} \bigl(1-p^{k_1+k_2-1-2s} \alpha_{f_1}(p)^a\alpha_{f_2}(p)^b\bigr)^{-1}\\
& \phantom{xxxxxxssssssssssssss} \times \prod_p \prod_{a,b =\pm 1} \bigl(1-p^{k_1+k_2-1-2s} \alpha_{f_1}(p)^a\alpha_{f_2}(p)^b p^{l-1}\bigr)^{-1}\\
& \phantom{xxssssssssss} \times \sum_{d_0} c_{h_1}(|d_0|)\overline{c_{h_2}(|d_0|)}L(1-l,\Bigl(\frac{d_0} * \Bigr))|d_0|^{-s} \\
& \phantom{xxxxxxxxxssssssssssssssssxxs} \times \prod_p L_p(\Bigl(\frac{d_0} p \Bigr);\widetilde c_{f_1}(p),\widetilde c_{f_2}(p),p^{l-1/2}+p^{1/2-l},  p^{-2s+l_1+l_2+l-3/2}),
\end{eqnarray*}
We note that when $l=1$, $\sum_{m=0}^{\infty}H(1,m) {\bf e}(mz)$
 is not a homomorphic modular form. However, by adding some infinite series to it, we can obtain a real analytic modular form, which will be 
denoted by $E_{3/2}(z)$. Let
$G_2(z)=\frac 1{8\pi y}-\frac 1{24}+\sum_{m=1}^\infty \sigma_1(m) {\bf e}(mz)$ be a nearly holomorphic form of weight $2$ with respect to $SL_2(\ZZ)$, where $\sigma_1(m)=\sum_{d|m} d$. 

Then $G_2$ can be regarded as the Shimura correspondence of $E_{3/2}$. 
In this case, we define $D(s,h_1,h_2,E_{3/2})$ by putting $l_3=2, c_{h_3}(|d_0|)=L(0,\Bigl(\frac{d_0} * \Bigr))$, and $\widetilde c_{f_3}=p^{-1/2}+p^{1/2}$ in (\ref{D}).

\section{Rankin-Selberg convolution of D-I-I lift}

Now our first main result can be stated as follows:
\begin{theorem}
\label{th.explicit-RS}
Let $k_1,k_2$ and $n$ be positive even integers. Given Hecke eigenforms $h_1 \in  S_{k_1-n/2+1/2}^+(\varGamma_0(4))$ and $h_2 \in S_{k_2-n/2+1/2}^+(\varGamma_0(4))$ let $ f_1 \in S_{2k_1-n}(SL_2(\ZZ))$ and $f_2 \in S_{2k_2-n}(SL_2(\ZZ))$ be the primitive forms 
corresponding to $h_1$ and $h_2,$ respectively. 
Then, we have
\begin{align}\label{explicit-formula}
&R(s,I_n(h_1),I_n(h_2))=\frac {2^{sn}}{\zeta(2s+n-k_1-k_2+1)} \Bigl( \lambda_n D(s;h_1,h_2,E_{n/2+1/2}) \prod_{i=1}^{\frac n2-1} \frac {L(2s-2i,f_1 \otimes f_2)}{\zeta(4s+2n-2k_1-2k_2+2-2i)} \\
& \phantom{xxxxxxxxxxxxxxx} + \mu_n c_{h_1}(1){\overline{c_{h_2}(1)}} \zeta(2s-k_1-k_2+n/2+1) \prod_{i=1}^{\frac{n}{2}} \frac {L(2s-2i+1,f_1 \otimes f_2)}{\zeta(4s+2n-2k_1-2k_2+2-2i)}\Bigr),\nonumber
\end{align}
where $\lambda_n$ and $\mu_n$ are non-zero rational numbers depending only on $n$.
\end{theorem}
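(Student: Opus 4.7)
The plan is to follow the blueprint of \cite{KK15}: unfold the Rankin--Selberg sum of the two D-I-I lifts, reduce the computation to a product of local Euler factors (the formal power series of Rankin--Selberg type mentioned in the introduction), and then identify the resulting global expression with the right hand side.

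First I would substitute the Fourier coefficient formula $c_{I_n(h_i)}(T)=c_{h_i}(|\frkd_T|)\frkf_T^{k_i-n/2-1/2}\prod_p \widetilde F_p(T,\alpha_{f_i}(p))$ into
$$R(s,I_n(h_1),I_n(h_2))=\sum_{T \in {\call_n}_{>0}/\sim}\frac{c_{I_n(h_1)}(T)\overline{c_{I_n(h_2)}(T)}}{e(T)(\det T)^s}$$
and reorganize the sum by stratifying over the fundamental discriminant $d_0=\frkd_T$ and the conductor $\frkf_T$. Since $\widetilde F_p(T,X)$, $\frkf_T$ and $\xi_p(T)$ are all local $GL_n(\ZZ_p)$-invariants of $T$, this decomposes, via the standard local--global reduction for the genus, into a sum over fundamental discriminants $d_0$ of a product over primes $p$ of local sums indexed by $GL_n(\ZZ_p)$-classes of $T$.

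Second, the core step is a purely local identity. For each prime $p$ one must evaluate
$$\mathcal{P}_p(X_1,X_2;t) \;=\; \sum_{T}\frac{\widetilde F_p(T,X_1)\,\widetilde F_p(T,X_2)}{e_p(T)}\,t^{\,\nu_p(\det(2T))},$$
where $T$ ranges over nondegenerate local classes with fixed local discriminant type $\xi_p(T)=\chi_p(d_0)$. Exploiting the self-duality $\widetilde F_p(T,X^{-1})=\widetilde F_p(T,X)$ for even $n$ recalled in Section 2.4 and the explicit expression for $b_p(T,s)$ in terms of $F_p(T,X)$, one expects a closed form of the shape
$$\mathcal{P}_p(X_1,X_2;t)\;=\;\frac{L_p\!\bigl(\chi_p(d_0);\,X_1{+}X_1^{-1},\,X_2{+}X_2^{-1},\,p^{(n-1)/2}{+}p^{(1-n)/2},\,t\bigr)}{\prod_{a,b=\pm1}\bigl(1-p^{k_1+k_2-1-2s}\alpha_{f_1}(p)^a\alpha_{f_2}(p)^b\bigr)\cdot\prod_i(1-p^{4i-4s-\cdots})^{-1}}$$
after the appropriate substitution $t=p^{-2s+k_1+k_2-n/2-3/2}$. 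This is the bridge between the Siegel series on the left and the polynomial $L_p$ defined in (\ref{L}).

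Third, assemble the local identities globally. The Cohen Eisenstein series $E_{n/2+1/2}$ enters because $c_{E_{n/2+1/2}}(|d_0|)=L(1-n/2,(d_0/\ast))$ and its Shimura correspondent is $G_n$ with $\widetilde c_{G_n}(p)=p^{(n-1)/2}+p^{(1-n)/2}$, which is precisely the third argument of $L_p$ in the local formula above. This forces the main global factor to be $D(s,h_1,h_2,E_{n/2+1/2})$ multiplied by the tensor $L$-functions and $\zeta$-factors coming from the denominator of $\mathcal{P}_p$. The two terms on the right hand side, with rational coefficients $\lambda_n$ and $\mu_n$, correspond to the bifurcation in the Fourier expansion of $E_{n/2+1/2}$: the sum over $d_0\ne0$ produces the $\lambda_n D(\cdot)$ term, while the constant term $H(n/2,0)=\zeta(1-n)$ produces the $\mu_n c_{h_1}(1)\overline{c_{h_2}(1)}\zeta(2s-k_1-k_2+n/2+1)\cdot\prod L/\prod\zeta$ term through the degenerate (conductor-vanishing) stratum of the sum over $T$.

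The main obstacle is the local identity for $\mathcal{P}_p$ in the second step. In \cite{KK15} the analogous calculation was carried out in the diagonal case $X_1=X_2$, but here two independent Satake parameters $\alpha_{f_1}(p)$ and $\alpha_{f_2}(p)$ must be tracked simultaneously, and the product $\widetilde F_p(T,X_1)\widetilde F_p(T,X_2)$ does not factor through any simpler invariant of $T$. The computation will proceed by stratifying $T$ according to its Jordan decomposition over $\ZZ_p$, applying induction on $n$ using the formula relating $F_p(T\perp\langle u\rangle,X)$ to $F_p(T,X)$, and verifying by matching rational functions that the output is exactly the degree-$5$ polynomial $L_p(\xi;X_1,X_2,X_3,t)$ defined in (\ref{L}). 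This combinatorial core, which should occupy Section 4, simultaneously yields the explicit Rankin--Selberg formula and, as a byproduct, the analytic properties of $D(s,h_1,h_2,E_{n/2+1/2})$ claimed in Theorem \ref{th.main-result}.
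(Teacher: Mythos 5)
Your first and third steps track the paper's actual route reasonably well: the paper does unfold the Rankin--Selberg sum using the genus-invariance of $c_{I_n(h_i)}(T)$, stratifies by the fundamental discriminant $d_0=\frkd_T$, reduces to a local formal power series $H_{n,p}(d_0,\omega,X,Y,t)=\sum_{A}\widetilde F_p^{(0)}(A,X)\widetilde F_p^{(0)}(A,Y)\alpha_p(A)^{-1}\omega(A)t^{\nu_p(\det A)}$ (your $\mathcal P_p$, with the local density $\alpha_p$ replacing $e_p$ via Siegel's mass formula), and identifies its $\iota$-part with the polynomial $L_p$ evaluated at $X_3=p^{(n-1)/2}+p^{(1-n)/2}$; the factor $L(1-n/2,(\tfrac{d_0}{*}))$ and the Shimura correspondent $G_n$ of $E_{n/2+1/2}$ enter exactly as you say. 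The genuinely hard local computation you flag (two independent Satake parameters) is indeed the content of the paper's Section 4.2.

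However, there is a concrete gap in how you propose to obtain the second summand $\mu_n c_{h_1}(1)\overline{c_{h_2}(1)}\,\zeta(2s-k_1-k_2+n/2+1)\prod_{i=1}^{n/2}L(2s-2i+1,f_1\otimes f_2)/\prod\zeta(\cdots)$. You attribute it to the constant term $H(n/2,0)=\zeta(1-n)$ of $E_{n/2+1/2}$ acting through a ``degenerate (conductor-vanishing) stratum'' of the sum over $T$. No such stratum exists: every $T\in{\call_n}_{>0}$ has a nonzero fundamental discriminant $\frkd_T$ and $\frkf_T\ge 1$, and $E_{n/2+1/2}$ never enters the unfolding at all --- it only serves to label the Dirichlet series $D$. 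The actual source of the second term is the local--global obstruction in passing from the sum over $SL_n(\ZZ)$-classes to a product of local sums: a collection of local lattices glues to a global one precisely when the product of local Hasse invariants is $1$, so (following Ibukiyama--Saito) the unfolded expression is $\tfrac12\bigl(\prod_p H_{n,p}(d_0,\iota,\cdots)+(-1)^{n(n+2)/8}\prod_p H_{n,p}(d_0,\varepsilon,\cdots)\bigr)$, where $\varepsilon$ is the Hasse-invariant weight. The paper then shows $H_{n,p}(d_0,\varepsilon,\cdots)=0$ whenever $\nu_p(d_0)>0$, so the $\varepsilon$-contribution survives only for $d_0=1$ --- whence the factor $c_{h_1}(1)\overline{c_{h_2}(1)}$ --- and its explicit evaluation produces the odd shifts $L(2s-2i+1,f_1\otimes f_2)$ that distinguish the second term from the first. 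Your proposal computes only the $\iota$-weighted local series and therefore can yield only the first summand; the mechanism you offer for the second cannot produce it, and a second local computation (of the Hasse-weighted series) is an essential missing ingredient.
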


The proof will be given in Section \ref{proof}. Now for the Dirichlet series $D(s,h_1,h_2,h_3)$, put 
\begin{align*}
&\cald(s,h_1,h_2,h_3)=2^{-s}\pi^{-2s} \frac{\Gamma(s)\Gamma(s-l_3+1/2)\Gamma(s-l_2+1/2)  \Gamma(s-l_2-l_3+1)}{\Gamma(s-l_1/2-l_2/2-[(l_3-1)/2])}\\
& \phantom{xxxxxxxxxxx} \times \xi(4s-2l_1-2l_2-2l_3+2)D(s,h_1,h_2,h_3).
\end{align*}
Then our second main result can be stated as follows. 
\begin{theorem}
\label{th.main-result} For $i=1,2$, let $h_i$ be a cuspidal Hecke eigenform in $S_{k_i-n/2+1/2}^+(\varGamma_0(4))$.
\begin{itemize}
\item [(1)] ${\mathcal D}(s,h_1,h_2,E_{n/2+1/2})$ has a meromorphic continuation to the whole $s$-plane, and has the following functional equation:
$$\cald(k_1+k_2-(n+1)/2-s;h_1,h_2,E_{n/2+1/2})=\cald(s;h_1,h_2,E_{n/2+1/2}).
$$
\item[(2)] Suppose that $k_1=k_2=k$ and $h_1=h_2$. $D(s;h_1,h_1,E_{n/2+1/2})$ has a simple pole at $s=k$ with the residue
\[d_n \frac{\langle h_1,h_1 \rangle 2^{2k}\pi^{k} L(k,f_1)}{\Gamma(k-n/2+1/2)},\]
  where  $d_n$ is a non-zero constant depending only on $n$. 
\end{itemize}
\end{theorem}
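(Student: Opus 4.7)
The strategy is to read all analytic information about $D(s;h_1,h_2,E_{n/2+1/2})$ off Theorem~\ref{th.explicit-RS}, using the analytic properties of $R(s,I_n(h_1),I_n(h_2))$ provided by Proposition~\ref{prop.fc-RS} together with the known functional equations of $\zeta$ and $\call(\cdot,f_1\otimes f_2)$.

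First I would solve (\ref{explicit-formula}) for $D(s;h_1,h_2,E_{n/2+1/2})$, giving
\[
\lambda_n\, D(s;h_1,h_2,E_{n/2+1/2})\prod_{i=1}^{n/2-1}\!\frac{L(2s-2i,f_1\otimes f_2)}{\zeta(4s+2n-2k_1-2k_2+2-2i)}
=\frac{\zeta(2s+n-k_1-k_2+1)}{2^{sn}}\,R(s,I_n(h_1),I_n(h_2))-B(s),
\]
where $B(s)$ is the secondary term in (\ref{explicit-formula}), built only from $\zeta$- and $L(\cdot,f_1\otimes f_2)$-factors. Every factor on the right-hand side extends meromorphically to $\CC$, so $D$ does too, giving the meromorphic continuation in~(1). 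For the functional equation, I would combine the symmetry of $\mathcal{R}(s,I_n(h_1),I_n(h_2))$ under $s\mapsto k_1+k_2-(n+1)/2-s$ (Proposition~\ref{prop.fc-RS}(1)) with (\ref{f-f-RS}) applied to each $\call(2s-2i,f_1\otimes f_2)$ and with the $\xi$-symmetry of all $\zeta$-factors. The key observation is that under $s\leftrightarrow\sigma:=k_1+k_2-(n+1)/2-s$ the reindexing $i\leftrightarrow n/2-i$ permutes $\{2s-2i:1\le i\le n/2-1\}$ among itself, so $\prod_i \call(2s-2i,f_1\otimes f_2)$ is already invariant as a product of completed $L$-values. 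After multiplying by the completion factor defining $\cald$ and using the Legendre duplication formula to convert the $\Gamma$-factors of $\call(\cdot,f_1\otimes f_2)$ and of $\gamma_n$ into the $\Gamma$-factors appearing in the definition of $\cald$, both sides of the solved identity become invariant under $s\leftrightarrow\sigma$, yielding $\cald(k_1+k_2-(n+1)/2-s;h_1,h_2,E_{n/2+1/2})=\cald(s;h_1,h_2,E_{n/2+1/2})$.

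For part~(2), specialize to $h_1=h_2=h$ and $k_1=k_2=k$. Using the identity $L(s,f\otimes f)=\zeta(s-2k+n+1)L(s-2k+n+1,f,\mathrm{Ad})$, I would verify that every $\zeta$- and $L(\cdot,f\otimes f)$-factor appearing in $B(s)$ and in the solved expression is finite and nonzero at $s=k$ (their arguments all land on integers $\geq 2$). Hence any pole of $D(s;h,h,E_{n/2+1/2})$ at $s=k$ comes solely from the pole of $R(s,I_n(h),I_n(h))$ at $s=k$, which by Proposition~\ref{prop.fc-RS}(2) is simple with residue a nonzero constant (made of $\xi$-values and $\gamma_n(k)$) times $\langle I_n(h),I_n(h)\rangle$. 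Substituting Theorem~\ref{th.Ikeda-conjecture} for $\langle I_n(h),I_n(h)\rangle$ introduces a factor $\prod_{i=1}^{n/2-1}L(2k-2i,f\otimes f)$ that cancels the reciprocal $L$-factors appearing when we invert the coefficient of $D$ in~(\ref{explicit-formula}). The remaining Gamma factors simplify via duplication: at $l_1=l_2=k$ the $\Gamma(k+(n-2k+2-i)/2)$'s in $\gamma_n(k)$ cancel, leaving $\prod_{i=1}^{n}\Gamma(k+(1-i)/2)$ in the denominator, while Theorem~\ref{th.Ikeda-conjecture} contributes $\Gamma(k)\prod_{i=1}^{n/2-1}\Gamma(2k-2i)$ which, after $\Gamma(2k-2i)=2^{2k-2i-1}\pi^{-1/2}\Gamma(k-i)\Gamma(k-i+1/2)$, collapses against the denominator to $1/\Gamma(k-n/2+1/2)$. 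The remaining powers of $2$ and $\pi$ reorganize to $2^{2k}\pi^k$ times a $k$-independent constant $d_n$.

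The main obstacle will be the exhaustive Gamma-function bookkeeping in both parts: tracking how the $\Gamma$-factors from $\gamma_n(s)$, from the completions of $\call(\cdot,f_1\otimes f_2)$, and from the definition of $\cald$ match up after $s\leftrightarrow\sigma$ in~(1), and how they collapse to $1/\Gamma(k-n/2+1/2)$ at $s=k$ in~(2). These computations are elementary but delicate, driven by repeated applications of the Legendre duplication formula and by the reindexing $i\leftrightarrow n/2-i$ in the relevant products.
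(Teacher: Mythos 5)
Your proposal is correct and matches the paper's own argument in all essentials: both isolate $D$ from Theorem \ref{th.explicit-RS}, inherit meromorphy and the functional equation from the properties of $\mathcal{R}(s,I_n(h_1),I_n(h_2))$ in Proposition \ref{prop.fc-RS} together with the reindexing invariance $i\leftrightarrow n/2-i$ of $\prod_i\call(2s-2i,f_1\otimes f_2)$ and Gamma-factor bookkeeping, and both obtain the residue in (2) from Proposition \ref{prop.fc-RS}(2) combined with Theorem \ref{th.Ikeda-conjecture} after checking that the secondary term and the factors $L(2k-2i,f\otimes f)$ are finite and nonzero at $s=k$. The paper organizes the Gamma bookkeeping through an auxiliary quotient $\delta_n(s)$ made symmetric via the reflection formula and treats the two terms of (\ref{explicit-formula}) as separately completed pieces (noting the second vanishes when $n\equiv 2 \bmod 4$ since $c_{h_i}(1)=0$), but this is only a presentational difference from your duplication-formula computation.
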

The proof will be also given in Section \ref{proof}.
\begin{remark}
We can also prove the algebraicity of $D(s;h_1,h_2,E_{n/2+1/2})$ at positive integers.
\end{remark}

\begin{remark} Special case of $n=2$. In this case, $I_2(h_i)$ is the Saito-Kurokawa lift of $h_i$. Then
$$R(s,I_2(h_1),I_2(h_2))=\frac {2^{2s-1}}{\zeta(2s+3-k_1-k_2)} D(s;h_1,h_2,E_{3/2}).
$$
As far as we know, this is a new result. From Proposition \ref{prop.fc-RS}, we see that $\mathcal D(s.h_1,h_2,E_{3/2})$ is holomorphic except possibly at $\tfrac {k_1+k_2}2-\tfrac j4$, 
$j=0,1,...,6$.
\end{remark}

But in general case, we do not know such holomorphy due to zeros of $L(s,f \otimes f)$. We can only conclude that $\mathcal D(s.h_1,h_2,E_{3/2})\prod_{i=1}^{n/2-1} \mathcal L(2s-2i,f\otimes f)$ is holomorphic except possibly at $\frac {k_1+k_2}2-\frac j4$ for $j=0,1,...,2n+2$.
So we raise the following question.
\begin{question}
Is $\mathcal D(s,h_1,h_1,E_{n/2+1/2})$ holomorphic except possibly at $\frac {k_1+k_2}2-\frac j4$ for $j=0,1,...,2n+2$?
\end{question}
We note that for $f_i\in S_{2l}(SL_2(\Bbb Z))$, $f_i(z)=\sum_{m=1}^\infty \widetilde c_{f_i}(m) m^{l-\tfrac 12}{\bf e}(mz)$,
the triple convolution product $L(s,f_1,f_2,f_3)=\sum_{m=1}^\infty \widetilde c_{f_1}(m) \widetilde c_{f_2}(m)\widetilde c_{f_3}(m) m^{-s}$ has the natural boundary $\mathrm{Re}(s)=0$ (cf. \cite[p.24]{Ku1},  \cite[p. 231]{Ku2}). Taking this into account, we raise the following question.
\begin{question}
Does the same assertion as Theorem \ref{th.main-result} hold if we replace $E_{n/2+1/2}$ by a cuspidal Hecke eigenform $h_3$ in $S^+_{l_3+1/2}(\varGamma_0(4))$?
If this is not the case, what is the natural boundary of $\mathcal D(s,h_1,h_2,h_3)$?
\end{question}

\subsection{Reduction to local computations}
In order to prove Theorem \ref{th.explicit-RS}, we reduce the problem to local computations. 

For $a,b \in \QQ_p^{\times}$ let $(a,b)_p$ the Hilbert symbol on $\QQ_p.$ Following Kitaoka \cite{Ki2}, we define the Hasse invariant $\varepsilon(A)$ of $A \in S_m(\QQ_p)^{\mathrm{nd}}$ by 
$$\varepsilon(A)=\prod_{1 \le i \le j \le m}(a_i,a_j)_p$$
if $A$ is equivalent to $a_1 \bot \cdots \bot a_m$ over $\QQ_p$ with some $a_1,a_2,...,a_m \in \QQ_p^{\times}.$ We note that this definition does not depend on the choice of $a_1,a_2,...,a_m.$ 

Now let $m$ and $l$ be positive integers such that $m \ge l.$ Then for non-degenerate symmetric matrices $A$ and  $B$ of degree $m$ and $l$ respectively with entries in ${\ZZ}_p$  we define the local density $\alpha_p(A,B)$ and the primitive local density $\beta_p(A,B)$ representing $B$ by $A$ as
$$\alpha_p(A,B)=2^{-\delta_{m,l}}\lim_{a \rightarrow
\infty}p^{a(-ml+l(l+1)/2)}\#{\mathcal A}_a(A,B),$$
 $$\beta_p(A,B)=2^{-\delta_{m,l}}\lim_{a \rightarrow
\infty}p^{a(-ml+l(l+1)/2)}\#{\mathcal B}_a(A,B),$$
where $${\mathcal A}_a(A,B)=\{X \in
M_{ml}({\ZZ}_p)/p^aM_{ml}({\ZZ}_p) \ | \ A[X]-B \in p^aS_l({\ZZ}_p)_e \},$$
$${\mathcal B}_a(A,B)=\{X \in {\mathcal A}_a(A,B) \ | \ 
  {\rm rank}_{{\ZZ}_p/p{\ZZ}_p} (X \ {\rm mod} \ p) =l \}.$$
In particular we write $\alpha_p(A)=\alpha_p(A,A).$ 
Furthermore put 
$$M(A)=\sum_{A' \in {\mathcal G}(A)} \frac{1}{e(A')}$$
for a positive definite symmetric matrix $A$ of degree $n$ with entries in ${\ZZ},$ where ${\mathcal G}(A)$ denotes the set of $SL_{n}({\ZZ})$-equivalence classes belonging to the genus of $A.$   Then
by Siegel's main theorem on the quadratic forms, we obtain 
\begin{equation}\label{kappa}
M(A)=\kappa_{n} \det A^{(n+1)/2} \prod_p \alpha_p(A)^{-1},\quad \kappa_{n}=2^{2-n}\pi^{-n(n+1)/4}\prod_{i=1}^{n}\Gamma(i/2)
\end{equation}
(cf. Theorem 6.8.1 in \cite{Ki2}). Put  
$${\mathcal F}_{p}=\{d_0 \in {\ZZ}_p \ | \ \nu_p(d_0) \le 1\}$$ 
 if $p$ is an odd prime, and  
$${\mathcal F}_{2}=\{d_0 \in {\ZZ}_2 \ | \  d_0 \equiv 1 \ {\rm mod} \ 4, \ {\rm  or} \  d_0/4  \equiv -1 \  {\rm mod} \ 4,  \ {\rm or} \ \nu_2(d_0)=3 \}.$$
From now on let $\call_{m,p}^{(0)}=S_m(\ZZ)_p)_e^{\nd}$. We note that $\call_{m,p}^{(0)}=S_m(\ZZ_p)^{\nd}$ if $p \not=2$.
For $T \in \call_{m,p}$,  
\[F^{(0)}(T,X)=F(2^{-\delta_{2,p}}T,X) \text{ and } \widetilde F^{(0)}(T,X)=\widetilde F(2^{-\delta_{2,p}}T,X),\]
where $\delta_{2,p}$ is Kronecker's delta.
We note that 
\[F^{(0)}(T,X)=F(T,X) \text{ and } \widetilde F^{(0)}(T,X)=\widetilde F(T,X) \text{ if } p\not=2.\]
A function $\omega$ on a subset ${\mathcal S}$ of $S_m(\QQ_p)$ is said to be $GL_m({\ZZ}_p)$-invariant if $\omega(A[X])=\omega(A)$ for any $A \in {\mathcal S}$ and $X \in GL_m({\ZZ}_p).$
Let $\iota_{m,p}$ be the constant function on ${\mathcal L}_{m,p}^{(0)}$ taking the value 1, and $\varepsilon_{m,p}$ the function on ${\mathcal L}_{m,p}^{(0)}$ assigning the Hasse invariant of $A$ for $A \in {\mathcal L}_{m,p}^{(0)}.$  We sometimes drop the suffix and write $\iota_{m,p}$ as $\iota_p$ or $\iota$ and the others if there is no fear of confusion.
Moreover for $d \in \QQ_p^\times \cap \ZZ_p$, let 
\[\call_{m,p}^{(0)}(d)= \{ A \in {\mathcal L}_{m,p}^{(0)} \ | \  (-1)^{[(m+1)/2]} \det A=d p^{2r} \text { with } r \in \ZZ_{\ge 0} \}.\] 
For $d_0 \in {\mathcal F}_p,l=0,1$ and a non-negative even integer $r$, put
$\kappa(d_0,r,l)=\{(-1)^{r(r+2)/8}\,((-1)^{r/2}2,\,d_0)_2\}^{l\delta_{2,p}}$.  For $d_0 \in {\mathcal F}_{p}$ and a $GL_{n}({\ZZ})_p$-invariant function $\omega_p=\varepsilon_p^l$ with $l=0,1$, we define a formal power series $H_{n,p}(d_0,\omega_p,X,Y,t) \in {\CC}[X,X^{-1},Y,Y^{-1}][[t]]$ by 
\begin{eqnarray*}
H_{n,p}(d_0,\omega_p,X,Y,t)=\kappa(d_0,n,l)^{-1}
 \sum_{A \in {\mathcal L}_{n,p}^{(0)}(d_0)/GL_{n}({\ZZ}_p)} \frac{\widetilde F_p^{(0)}(A,X)\widetilde F_p^{(0)}(A,Y)}{\alpha_p(A)} \omega_p(A) t^{\nu_p(\det A)}.
\end{eqnarray*}
We call $H_{n,p}(d_0,\omega_p,X,Y,t)$ a formal power series of Rankin-Selberg type. An explicit formula for $H_{n,p}(d_0,\omega_p,X,Y,t)$ will be given in the next section for $\omega_p=\iota_{n,p}$ and $\varepsilon_{n,p}.$ 
Let ${\mathcal F}$ denote the set of fundamental discriminants, and for $l=\pm 1,$ put 
$${\mathcal F}^{(l)}=\{ d_0 \in {\mathcal F} \  | \ ld_0 >0 \}.$$
Now for $i=1,2$ let $h_i$ be a Hecke eigenform in $S_{k_i-n/2+1/2}^+(\varGamma_0(4)),$ and $I_n(h_i)$ be as in Section 3. Let $T \in {{\mathcal L}_{n}}_{>0}.$ Then it follows from Lemma 4.1 that the $T$-th Fourier coefficient $c_{{I_n(h_i)})}(T)$  of $I_n(h_i))$ is uniquely determined by the genus to which $T$ belongs, and, by definition, it can be expressed as
$$c_{I_n(h_i)}(T)=c_{h_i}(|{\textfrak d}_T|)({\textfrak f}_T)^{k_i-n/2-1/2}\prod_p \widetilde F(T,\alpha_{i,p}),
$$
where $c_{h_i}(|{\textfrak d}_T|)$ is the $|{\textfrak d}_T|$-th Fourier coefficient of $h_i,$ and $\alpha_{i,p}$ is the 
Satake $p$-th parameter of $f_i$. Thus, by using the same method as in Proposition 2.2 of \cite{IS95}, similarly to \cite[Theorem 4.3]{KK15}, we obtain

\begin{theorem}
\label{th.local-global} 
 Let the notation and the assumption be as above. Then for ${\rm Re}(s) \gg 0,$ we have 
 \begin{align*}
 &R(s,I_n(h_1),I_n(h_2))=\kappa_{n} 2^{ns-1}
\sum_{d_0 \in {\mathcal F}^{((-1)^{n/2})} }c_{h_1}(|d_0|)\overline{c_{h_2}(|d_0|)}  |d_0|^{n/2-k_1/2-k_2/2+1/2} \\
&\phantom{xxx} \times 
\Bigl(\prod_p H_{n,p}(d_0,\iota_p,\alpha_{1,p},\alpha_{2,p},p^{-s+k_1/2+k_2/2}) + (-1)^{n(n+2)/8}\prod_p H_{n,p}(d_0,\varepsilon_p,\alpha_{1,p},\alpha_{2,p},p^{-s+k_1/2+k_2/2})\Bigr).
\end{align*}
\end{theorem}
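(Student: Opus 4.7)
The plan is to follow the strategy of \cite[Proposition 2.2]{IS95} and \cite[Theorem 4.3]{KK15}, adapting it to two distinct eigenforms $h_1,h_2$.

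First, I substitute Ikeda's explicit formula for $c_{I_n(h_i)}(T)$ into the defining Dirichlet series
\[R(s,I_n(h_1),I_n(h_2))=\sum_{T \in {\call_n}_{>0}/SL_n(\ZZ)}\frac{c_{h_1}(|\frkd_T|)\,\overline{c_{h_2}(|\frkd_T|)}\,\frkf_T^{k_1+k_2-n-1}\prod_p \widetilde F_p(T,\alpha_{1,p})\widetilde F_p(T,\alpha_{2,p})}{e(T)(\det T)^s}.\]
Since $|\frkd_T|$, $\frkf_T$, and each local factor $\widetilde F_p(T,\alpha_{i,p})$ are invariants of the genus of $T$, the summand is constant on each genus apart from the $e(T)^{-1}$. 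I therefore group the sum by genera $\calg$ and apply Siegel's mass formula (\ref{kappa}) to convert $\sum_{T\in\calg/SL_n(\ZZ)} e(T)^{-1}=M(\calg)$ into $\kappa_n(\det A)^{(n+1)/2}\prod_p \alpha_p(A)^{-1}$ for any representative $A\in\calg$.

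Next, I parametrize genera by the pair $(d_0,(A_p)_p)$ where $d_0=\frkd_T\in\calf^{((-1)^{n/2})}$ is a fundamental discriminant and $A_p$ is the local $GL_n(\ZZ_p)$-class of $T$ in $\call_{n,p}^{(0)}(d_0)/GL_n(\ZZ_p)$. By Hasse--Minkowski, combined with the vanishing of the Hasse invariant at infinity for positive-definite forms, a tuple $(A_p)_p$ arises from a global genus precisely when $\prod_p \varepsilon_p(A_p)$ takes a prescribed value; after absorbing the $p=2$ normalization $\kappa(d_0,n,l)$ into this condition one obtains the global sign $(-1)^{n(n+2)/8}$. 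Applying the idempotent $\tfrac12\bigl(1+(-1)^{n(n+2)/8}\prod_p \varepsilon_p\bigr)$ decomposes the constrained sum into a trivially-weighted part ($\omega_p=\iota_p$) and a Hasse-weighted part ($\omega_p=\varepsilon_p$), producing the two terms appearing in the statement.

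Using $\det(2T)=|d_0|\frkf_T^2$ and the identity $\widetilde F_p(T,X)=X^{-\frke_p(T)}F_p(T,p^{-(n+1)/2}X)$, I repackage the global factor $\frkf_T^{k_1+k_2-n-1}(\det T)^{(n+1)/2-s}$ into the constant $2^{ns-n(n+1)/2}|d_0|^{(n+1)/2-s}$ together with the local weights $t^{\nu_p(\det A_p)}$ at $t=p^{-s+(k_1+k_2)/2}$. Collecting $\widetilde F_p(T,\alpha_{1,p})\widetilde F_p(T,\alpha_{2,p})/\alpha_p(A_p)$ with these weights and $\omega_p(A_p)$ gives exactly $\kappa(d_0,n,l)\,H_{n,p}(d_0,\omega_p,\alpha_{1,p},\alpha_{2,p},t)$, and combining with the prefactor and the $\tfrac12$ from the idempotent produces the overall $\kappa_n 2^{ns-1}|d_0|^{n/2-k_1/2-k_2/2+1/2}$. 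The main obstacle is the $p=2$ bookkeeping---reconciling half-integral and even conventions through $F_p^{(0)}$, tracking the local sign $\kappa(d_0,n,l)$, and verifying that Hilbert reciprocity produces exactly the coefficient $(-1)^{n(n+2)/8}$ stated in the theorem. These manipulations are identical to those in \cite[\S4]{KK15}; the generalization from one eigenform to two only substitutes $\widetilde F_p(T,\alpha_p)^2$ by $\widetilde F_p(T,\alpha_{1,p})\widetilde F_p(T,\alpha_{2,p})$, a change that the two-variable definition of $H_{n,p}(d_0,\omega,X,Y,t)$ was designed precisely to accommodate.
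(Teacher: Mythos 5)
Your proposal is correct and follows exactly the route the paper takes: the paper's own proof of Theorem \ref{th.local-global} is precisely the observation that the Fourier coefficients of $I_n(h_i)$ are genus invariants, followed by Siegel's mass formula (\ref{kappa}), the local--global parametrization of genera with the Hasse-invariant idempotent $\tfrac12(1+(-1)^{n(n+2)/8}\prod_p\varepsilon_p)$, and the determinant/$\frkf_T$ repackaging, all carried out as in \cite[Proposition 2.2]{IS95} and \cite[Theorem 4.3]{KK15}. The only cosmetic quibble is that the Hasse invariant at the archimedean place of a positive-definite form is trivial (equal to $1$) rather than ``vanishing,'' but your argument uses it correctly.
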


\subsection{Formal power series associated with local Siegel series}
Throughout this section we fix a positive even integer $n.$ 
We simply write $\nu_p$ and $\chi_p$ as $\nu$ and $\chi$, respectively  if the prime number $p$ is clear from the context.
In this section we give an explicit formula of $H_{n}(d_0,\omega,X,Y,t)=H_{n,p}(d_0,\omega,X,Y,t)$  for $\omega=\iota, \varepsilon$ (cf. Theorem 5.5.1). The method is similar to that of 
 giving an explicit formula for the power series $H_{n-1,p}((d_0,\omega,X,Y,t)$ in \cite{KK15}. From now on we sometimes write $\omega=\varepsilon^l$ with $l=0$ or $1$ according as $\omega=\iota$ or $\varepsilon.$ 
 Henceforth, for a $GL_m({\ZZ}_p)$-stable subset ${\mathcal B}$ of $S_m(\QQ_p),$ we simply write  $\sum_{T \in {\mathcal B}}$ instead of $\sum_{T \in {\mathcal B}/\sim}$ if there is no fear of confusion. 
Let $m$ be an odd integer, and put
$${\mathcal L}^{(1)}_{m,p}=\{A \in {\mathcal L}_{m,p}^\nd \ | \ A \equiv -\ ^t {r}r  \ {\rm mod} \ 4{\mathcal L}_{m,p} \ { \rm for \ some } \ r \in \ZZ_p^{m} \}.
$$
For $A \in {\mathcal L}^{(1)}_{m},$ the integral vector $r \in \ZZ_p^{m}$ in the above definition is uniquely determined modulo $2\ZZ_p^{m}$ by $A,$ and is denoted by $r_A.$ Moreover it is easily shown that 
the matrix $\begin{pmatrix} 1&r_A/2\\{}^tr_A/2&{({}^tr_Ar_A+A)/4}\end{pmatrix},$ which will be denoted by $A^{(1)},$ belongs to $\call_{m+1,p},$ and that its $SL_{m+1}({\Bbb Z})$-equivalence class is uniquely determined by $A.$
We then define \[F_p^{(1)}(A,X)=F_p(A^{(1)},X), \text{ and } \widetilde F_p^{(1)}(A,X)=\widetilde F_p(A^{(1)},X).\] 

\subsubsection{Formal power series of Andrianov type}

For an $m \times m$ half-integral matrix $B$ over $\ZZ_p,$ let $(\overline{W},\overline {q})$ denote the quadratic space over $\ZZ_p/p\ZZ_p$ defined by the quadratic form $\overline {q}({\bf x})=B[{\bf x}] \ {\rm mod} \ p,$ and define the radical $R(\overline {W})$ of $\overline {W}$ by
$$R(\overline {W})=\{{\bf x} \in \overline {W} \ | \ \overline {B}({\bf x},{\bf y})=0 \ {\rm for \ any } \ {\bf y} \in \overline {W} \},$$
where $\overline {B}$ denotes the associated symmetric bilinear form of $\overline {q}.$ 
We then put $l_p(B)= {\rm rank}_{\ZZ_p/p\ZZ_p} R(\overline {W})^{\perp},$ where $R(\overline {W})^{\perp}$ is the orthogonal complement of $R(\overline {W})^{\perp}$ in $\overline {W}.$ Furthermore, in case $l_p(B)$ is even,  put $\overline {\xi}_p(B)=1$ or $-1$ according as $R(\overline {W})^{\perp}$ is hyperbolic or not. In case $l_p(B)$ is odd, we put $\overline {\xi}_p(B)=0.$ Here we make the convention that $\bar \xi_p(B)=1$ if $l_p(B)=0.$
Recall from Section 2.4, $\xi_p(B)=\chi((-1)^{n/2} det(B))$. So $\overline {\xi}_p(B)$ is different from the $\xi_p(B)$ in general, but they coincide if $B \in {\mathcal L}_{m,p} \cap \frac{1}{2}GL_m(\ZZ_p).$  For $B \in \call^{(0)}_{m,p}$, put 
$l_p^{(0)}(B)=l_p(2^{-\delta_{2,p}}B)$ and $\bar \xi_p^{(0)}(B)=\bar \xi_p(2^{-\delta_{2,p}}B)$.
 
Let $p \not=2.$ Then an element $B$ of $\call^{(0)}_{m,p}$ is equivalent, over $\ZZ_p$, to $\Theta \bot pB_1$
 with $\Theta \in GL_{m-n_1}(\ZZ_p) \cap S_{m-n_1}(\ZZ_p)$ and $B_1 \in S_{n_1}(\ZZ_p)^{\nd}.$ Then $\overline {\xi_p}^{(0)}(B)=0$ if $n_1$ is odd, and $\overline {\xi_p}^{(0)}(B)=\chi((-1)^{(m-n_1)/2} \det \Theta)$ if $n_1$ is even.
Let $p=2.$ Then an element $B \in \call^{(0)}_{m,2}$ is equivalent, over $\ZZ_2,$  to a matrix of the form $\Theta \bot 2B_1,$ where $\Theta \in GL_{m-n_1}(\ZZ_2) \cap S_{m-n_1}(\ZZ_2)_e$ and $B_1$ is one of the following two types: 
\begin{enumerate}
\item[(A.I)]
 $B_1 \in S_{n_1}(\ZZ_2)_o^{\nd}$; \vspace*{1mm}

\item[(A.II)]
 $B_1 \in  S_{n_1}(\ZZ_2)_e^{\nd}.$
 \end{enumerate}
Then $\bar \xi_p^{(0)}(B)=\chi((-1)^{(m-n_1)/2}\det \Theta)$ if $B_1$ of type (A.I) and $\bar \xi_p^{(0)}(B)=0$ if $B$ is of type (A.II).

Let $p \not=2.$ Then an element $B$ of ${\mathcal L}_{m-1,p}^{(1)}$ is equivalent, over ${\Bbb Z}_p$, to $\Theta \bot pB_1$
 with $\Theta \in GL_{m-n_1-1}({\Bbb Z}_p) \cap S_{m-n_1-1}({\Bbb Z}_p)$ and $B_1 \in S_{n_1}({\Bbb Z}_p)^{\mathrm{nd}}.$ 
Let $p=2.$ Then an element $B \in {\mathcal L}_{m-1,2}^{(1)}$ is equivalent, over ${\Bbb Z}_2,$  to a matrix of the form $2\Theta \bot B_1,$ where $\Theta \in GL_{m-n_1-2}({\Bbb Z}_2) \cap S_{m-n_1-2}({\Bbb Z}_2)_e$ and $B_1$ is one of the following three types: 
\begin{enumerate}
\item[(B.I)]
 $B_1=a  \bot 4B_2$ with $a \equiv -1 \ {\rm mod} \ 4, $ and $B_2 \in S_{n_1}({\Bbb Z}_2)_e^{\mathrm{nd}}$; \vspace*{1mm}

\item[(B.II)]
 $B_1 \in  4S_{n_1+1}({\Bbb Z}_2)^{\mathrm{nd}}$; \vspace*{1mm} 

\item[(B.III)]
 $B_1=a  \bot 4B_2$ with $a \equiv -1 \ {\rm mod} \ 4, $ and $B_2 \in S_{n_1}({\Bbb Z}_2)_o.$
 \end{enumerate}

Suppose that $p \neq 2,$ and let ${\mathcal U}={\mathcal U}_p$  be a complete set of representatives of ${\Bbb Z}_p^\times/({\Bbb Z}_p^\times)^2.$ Then, for each positive integer $l$ and $d \in {\mathcal U}_{p}$, there exists a unique, up to ${\Bbb Z}_p$-equivalence, element of $S_{l}({\Bbb Z}_p)  \cap GL_{l}({\Bbb Z}_p)$ whose determinant is $(-1)^{[(l+1)/2]}d,$ which will be denoted by  $\Theta_{l,d}.$ 
Suppose that $p=2,$ and put  ${\mathcal U}={\mathcal U}_{2}=\{1 ,5 \}.$ Then for each positive even integer $l$ and $d \in {\mathcal U}_{2}$ there exists a unique, up to ${\Bbb Z}_2$-equivalence, element of $S_{l}({\Bbb Z}_2)_{e} \cap GL_{l}({\Bbb Z}_2)$ whose determinant is  $(-1)^{l/2} d,$ which will be also denoted by  $\Theta_{l,d}.$ In particular, if $p$ is any prime number and $l$ is even, we put $\Theta_l=\Theta_{l,1}$ We make the convention that $\Theta_{l,d}$ is the empty matrix if $l=0.$ For an element $d \in {\mathcal U}$ we use the same symbol $d$ to denote the coset $d$ mod  $({\Bbb Z}_p^\times)^2.$

Let $r$ be an even positive integer. 
For $T \in \call_{r,p}^{(0)}$,  put $\frke^{(0)}(T)=\frke(2^{-1}T)$ and for $T \in \call_{r-1,p}^{(1)}$, put $\frke^{(1)}(T)=\frke(T^{(1)})$.
For $\xi=\pm 1$ and  $T \in {\mathcal L}_{r-j,p}^{(j)}$  with $j=0,1$, we define a  polynomial $\widetilde F_p^{(j)}(T,\xi,X)$ in $X$ and $X^{-1}$ by
$$\widetilde F_p^{(j)}(T,\xi,X)=X^{-\frke^{(j)}(T)}F_p^{(j)}(T,\xi p^{(-r+1)/2}X).$$
We note that $\widetilde F_p^{(j)}(T,1,X)=\widetilde F_p^{(j)}(T,X),$ but
$\widetilde F_p^{(j)}(T,-1,X)$ does not coincide with $\widetilde F_p^{(j)}(T,-X)$ in general. 
We also define a polynomial $\widetilde G_p^{(j)}(T,\xi,X,t)$ in $X,X^{-1}$ and $t$ by
$$\widetilde G_p^{(j)}(T,\xi,X,t)=\sum_{i=0}^{r-j} (-1)^i p^{i(i-1)/2}t^i \sum_{D \in GL_{r-j}(\ZZ_p) \backslash {\mathcal D}_{r-j,i}} \widetilde F^{(j)}_p(T[D^{-1}],\xi,X), $$
and put $\widetilde G_p^{(j)}(T,X,t)=\widetilde G_p^{(j)}(T,1,X,t).$
We also define a polynomial $G_p^{(j)}(T,X)$ in $X$ by
\begin{eqnarray*}
G_p^{(j)}(T,X)=\sum_{i=0}^{-jr} (-1)^i p^{i(i-1)/2} (X^2p^{r+1-j})^i \sum_{D \in GL_{r-j}(\ZZ_p) \backslash {\mathcal D}_{r-j,i}} F_p^{(j)}(T[D^{-1}],X). 
\end{eqnarray*}
We note that
$$\widetilde G_p^{(j)}(T,X,1)= X^{-\frke^{(j)}(T)} G_p^{(j)}(T,Xp^{-(r+1)/2}).$$
\begin{xrem} There are typos in \cite{KK15}:

Page 459, line 12: For `$\widetilde F_p^{(j)}(T,\xi X)$', read `$\widetilde F_p^{(j)}(T,\xi p^{(-r+1)/2}X)$'.

Page 459: line 19: For  `$\widetilde G_p^{(j)}(T,p^{-(m+1)/2}X)$', read `$\widetilde G_p^{(j)}(T,\xi p^{(-r+1)/2}X)$'
\end{xrem}
Suppose that $p \neq 2,$ and let ${\mathcal U}={\mathcal U}_p$  be a complete set of representatives of $\ZZ_p^\times/(\ZZ_p^\times)^2.$ Then, for each positive integer $l$ and $d \in {\mathcal U}_{p}$, there exists a unique, up to $\ZZ_p$-equivalence, element of $S_{l}(\ZZ_p)  \cap GL_{l}(\ZZ_p)$ whose determinant is $(-1)^{[(l+1)/2]}d,$ which will be denoted by  $\Theta_{l,d}.$ 
Suppose that $p=2,$ and put  ${\mathcal U}={\mathcal U}_{2}=\{1 ,5 \}.$ Then for each positive even integer $l$ and $d \in {\mathcal U}_{2}$ there exists a unique, up to $\ZZ_2$-equivalence, element of $S_{l}(\ZZ_2)_{e} \cap GL_{l}(\ZZ_2)$ whose determinant is  $(-1)^{l/2} d,$ which will be also denoted by  $\Theta_{l,d}.$ In particular, if $p$ is any prime number and $l$ is even, we put $\Theta_l=\Theta_{l,1}$ We make the convention that $\Theta_{l,d}$ is the empty matrix if $l=0.$ For an element $d \in {\mathcal U}$ we use the same symbol $d$ to denote the coset $d$ mod  $(\ZZ_p^\times)^2.$ Then by definition, we have the following lemma.

\begin{lems}
\label{lem.induction-formula-Siegel-series} 
Let $m$ be a positive even integer.  Let $B \in {\mathcal L}_{m,p}^{(0)}.$ Then 
$$\widetilde F_p^{(0)}(B,X)=\sum_{B' \in  {\mathcal L}_{m,p}^{(0)}/ GL_{m}(\ZZ_p)  } X^{-\frke^{(0)}(B')}\frac{\alpha_p(B',B)}{\alpha_p(B')} \times  G_p^{(0)}(B',p^{(-m-1)/2}X)(p^{-1}X)^{(\nu(\det B)-\nu(\det B'))/2}.$$
\end{lems}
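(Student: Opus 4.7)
The strategy is to derive the identity by Möbius-inverting the defining alternating-sum relation between $b_p^*$ and $b_p$ on the Dirichlet-series side, then passing through the polynomial identifications $b_p\leftrightarrow F_p^{(0)}$ and $b_p^*\leftrightarrow G_p^{(0)}$, and finally translating into the normalized polynomial $\widetilde F_p^{(0)}(B,X)=X^{-\frke^{(0)}(B)}F_p^{(0)}(B,p^{-(m+1)/2}X)$ together with the corresponding substitution in $G_p^{(0)}$. I would carry this out in three steps.

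The first, and most substantial, step is to establish the Dirichlet-series identity
\[b_p(2^{-\delta_{2,p}}B,s)=\sum_{B'\in\call_{m,p}^{(0)}/GL_m(\ZZ_p)}\frac{\alpha_p(B',B)}{\alpha_p(B')}\,\mathcal{C}(B,B',s)\,b_p^*(2^{-\delta_{2,p}}B',s),\]
where the scalar $\mathcal{C}(B,B',s)$ carries powers of $p$ and $p^{-s}$ accounting for $\nu_p(\det E)=(\nu_p(\det B)-\nu_p(\det B'))/2$ with $E\in M_m(\ZZ_p)^{\nd}$ realizing $B=B'[E]$. This is an orbit decomposition of the defining sum $b_p(T,s)=\sum_{R}\bfe_p(\mathrm{tr}(TR))p^{-\nu_p(\mu_p(R))s}$: partitioning the $R$'s by the $GL_m(\ZZ_p)$-class of the ``primitive companion'' $B'$ extracted from the denominator structure of $R$, the count of $R$'s in each class is essentially $\alpha_p(B',B)/\alpha_p(B')$ times a $p$-power recording $\nu_p(\det E)$.

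The second step translates this to a polynomial identity. Substituting $X=p^{-s}$ and
\[b_p(2^{-\delta_{2,p}}T,s)=F_p^{(0)}(T,X)(1-X)\frac{\prod_{i=1}^{m/2}(1-p^{2i}X^2)}{1-\xi_p(T)p^{m/2}X}\]
together with the analogous formula for $b_p^*$ and $G_p^{(0)}$, and noting that $\xi_p(B)=\xi_p(B')$ for every $B'$ in the sum (since $(-1)^{m/2}\det B$ and $(-1)^{m/2}\det B'$ differ by the square $(\det E)^2$), the common factors $(1-X)$, $\prod_{i=1}^{m/2}(1-p^{2i}X^2)$, $(1-\xi_p(B)p^{m/2}X)^{-1}$ cancel and yield an identity expressing $F_p^{(0)}(B,X)$ as an $\alpha_p(B',B)/\alpha_p(B')$-weighted sum of $G_p^{(0)}(B',X)$. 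The third step substitutes $X\mapsto p^{-(m+1)/2}X$ and multiplies by $X^{-\frke^{(0)}(B)}$ to pass to $\widetilde F_p^{(0)}$; here the key identity
\[\frke^{(0)}(B)-\frke^{(0)}(B')=(\nu_p(\det B)-\nu_p(\det B'))/2,\]
which holds because $B$ and $B'$ share the same fundamental discriminant $\frkd$ (their conductors $\frkf$ differ by $\det E$), reorganizes the exponents into the $X^{-\frke^{(0)}(B')}$ and $(p^{-1}X)^{(\nu_p(\det B)-\nu_p(\det B'))/2}$ factors on the right-hand side of the lemma.

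The main obstacle is Step~1: writing the orbit decomposition with the $p$- and $X$-powers pinned down exactly. The case $p=2$ requires extra attention because $\call_{m,2}^{(0)}=S_m(\ZZ_2)_e^{\nd}$ and the convention $F_p^{(0)}(T,X)=F_p(2^{-\delta_{2,p}}T,X)$ introduce powers of $2$ that must be propagated through the local densities and the $\frke^{(0)}$-invariant. Steps~2 and~3, though involving some bookkeeping, reduce to routine polynomial manipulation once Step~1 is in hand.
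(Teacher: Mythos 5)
Your Steps 2 and 3 are sound: the cancellation of the rational factor $(1-X)\prod_{i}(1-p^{2i}X^2)/(1-\xi_p(\cdot)p^{m/2}X)$ is legitimate because $\det B/\det B'$ is a square so $\xi_p(B)=\xi_p(B')$, and the exponent identity $\frke^{(0)}(B)-\frke^{(0)}(B')=(\nu(\det B)-\nu(\det B'))/2$ holds for the same reason ($\frkd_B=\frkd_{B'}$). Note, though, that the paper itself offers no proof of this lemma at all — it is introduced with ``Then by definition, we have the following lemma,'' i.e.\ it is imported as the known inversion formula (the even-degree analogue of the corresponding statement in \cite{KK15}, resting ultimately on \cite{Kat1} and \cite{IS95}), so there is no argument in the paper to compare yours against.

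The genuine gap is in your Step 1, which you yourself flag as the main obstacle but then leave unproved. Two distinct mechanisms are conflated there, and the quantitative content of the lemma lives entirely in them. First, the passage from the alternating-sum definition of $b_p^*$ to an expression for $b_p$ is a M\"obius inversion in the Hecke algebra of $GL_m(\QQ_p)$ (the identity $\sum_i(-1)^ip^{i(i-1)/2}t^i[{\mathcal D}_{m,i}]$ is the inverse of $\sum_W t^{\nu(\det W)}[W]$); it naturally produces a sum over cosets $W\in GL_m(\ZZ_p)\backslash M_m(\ZZ_p)^{\nd}$ with $B[W^{-1}]\in\call_{m,p}^{(0)}$, not a partition of the $R$'s in the defining sum of $b_p(T,s)$ as you describe. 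Second, and separately, one must convert the resulting orbit count $\#\{W: B[W^{-1}]\sim B'\}$ into the weight $\alpha_p(B',B)/\alpha_p(B')$; these two quantities differ by a power of $p$ coming from the Jacobian normalization $p^{a(-m^2+m(m+1)/2)}$ in the definition of the local densities, roughly a power of $|\det B/\det B'|_p$. Writing ``essentially $\alpha_p(B',B)/\alpha_p(B')$ times a $p$-power'' does not pin this down, and the exact exponent is precisely what decides whether the final factor is $(p^{-1}X)^{(\nu(\det B)-\nu(\det B'))/2}$ rather than, say, $X^{(\nu(\det B)-\nu(\det B'))/2}$ or $(pX)^{(\nu(\det B)-\nu(\det B'))/2}$ — a crude bookkeeping along your lines can land on any of these. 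Until that constant is computed (or the identity is cited from \cite{Kat1}/\cite{IS95}/\cite{KK15}), the proposal does not establish the stated formula.
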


\begin{lems} 
 \label{lem.primitive-Siegel-series}
 Let $n$ be a positive even integer. Let $B \in {\mathcal L}^{(0)}_{n,p}$. Throughout (1) and (2), for  $\Theta \in GL_{n-n_1}(\ZZ_p)$ with $n_1$ even, put  $\xi=\chi((-1)^{(n-n_1)/2} \det \Theta)$. Here we make the convention that $\xi=1$ if $n=n_1$.
\begin{itemize}
\item[(1)] Let $p \not=2, $ and suppose that  $B=\Theta \bot pB_1$  with $\Theta \in GL_{n-n_1}(\ZZ_p) \cap S_{n-n_1}(\ZZ_p)$ and $B_1 \in S_{n_1}({\Bbb Z}_p)^{\mathrm{nd}}.$ 
Then 
\begin{eqnarray*}
\lefteqn{
G_p^{(0)}(B,Y)
}{ } \\
&=& \begin{cases}
1 & \text{ if } n_1=0 \\
\displaystyle (1-\xi_p(B) p^{n/2}Y)\prod_{i=1}^{n_1/2-1}(1-p^{2i+n}Y^2)(1+p^{n_1/2+n/2}\xi Y)
& \text{ if $n_1$ is positive and even}  \\
\displaystyle (1-\xi_p(B) p^{n/2}Y)\prod_{i=1}^{(n_1-1)/2}(1-p^{2i+n}Y^2)
& \text{ if $n_1$ is odd}
\end{cases}.
\end{eqnarray*}
\item[(2)] Let $p=2.$ Suppose that $n_1$ is even and that $B=\Theta \bot 2B_1 $  with  $\Theta \in GL_{n-n_1}(\ZZ_2) \cap S_{n-n_1}(\ZZ_2)_e$ and $B_1 \in S_{n_1}({\Bbb Z}_2)^{\nd}.$  Then 
\begin{eqnarray*}
\lefteqn{G_2^{(0)}(B,Y)}{} \\
&=& \begin{cases}
1 & \text{ if } n_1=0 \\
{\displaystyle (1-\xi_2(B) 2^{n/2}Y)\prod_{i=1}^{n_1/2-1}(1-2^{2i+n}Y^2)(1+2^{n_1/2+n/2}\xi Y)}
& \text{ if }  n_1 \text{ is positive and }  B_1 \in S_{n_1}(\ZZ_2)_e,\\
(1-\xi_2(B) p^{n/2}Y)\displaystyle {\prod_{i=1}^{n_1/2}(1-2^{2i+n}Y^2)}
& \text { if } B_1 \in S_{n_1}(\ZZ_2)_o . 
  \end{cases}.
\end{eqnarray*}
\end{itemize}
\end{lems}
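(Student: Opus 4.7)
The plan is to compute the primitive local Siegel series $b_p^{*}(B,s)$ in closed form for the Jordan decomposition $B = \Theta \perp pB_1$, and then extract $G_p^{(0)}(B,Y)$ from the universal identity
\[
b_p^{*}(B,s) = G_p^{(0)}(B,p^{-s})(1-p^{-s})\frac{\prod_{i=1}^{n/2}(1-p^{2i-2s})}{1-\xi_p(B)p^{n/2-s}}
\]
recalled in Section~2.4. Substituting $Y = p^{-s}$ and dividing out the universal factor then yields the asserted product formula for $G_p^{(0)}(B,Y)$ directly, so the whole lemma reduces to a closed-form evaluation of $b_p^{*}(B,s)$.

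To compute $b_p^{*}(B,s)$ in that closed form I would use Kitaoka's block-diagonal identity for Siegel series: when $B = \Theta \perp pB_1$ with $\Theta$ unimodular, the sum defining $b_p(B,s)$ splits into a Gauss-type contribution over the $\Theta$-block (which is universal and depends only on $\xi = \chi_p((-1)^{(n-n_1)/2}\det\Theta)$ and on $n-n_1$) and a shifted Siegel integral over $B_1$ in which the spectral variable $s$ is translated by $n-n_1$. Passing from $b_p$ to the primitive version $b_p^{*}$ via the M\"obius-type alternating sum over cosets $GL_n(\ZZ_p)\backslash\mathcal D_{n,i}$ --- the same alternating sum that defines $G_p^{(0)}$ from $F_p^{(0)}$ --- kills the non-primitive contributions. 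What remains matches the asserted product, with the leading factor $(1 - \xi_p(B)p^{n/2}Y)$ arising from cancellation against the denominator in the universal identity above. The base case $n_1 = 0$ is immediate: here $B = \Theta$ is already unimodular, $F_p^{(0)}(\Theta,X) = 1$, and the alternating sum defining $G_p^{(0)}(\Theta,Y)$ collapses to $1$ by an inclusion-exclusion over the double cosets $GL_n(\ZZ_p)\backslash\mathcal D_{n,i}$.

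The main technical difficulty sits in part~(2), the dyadic case $p=2$. There the $2$-adic Jordan decomposition introduces two sub-types for $B_1$, and each produces a different Hasse-invariant profile for $B$. When $B_1 \in S_{n_1}(\ZZ_2)_e$ the calculation parallels the odd-prime case and the product retains the anisotropic binary factor $(1+2^{n_1/2+n/2}\xi Y)$; when $B_1 \in S_{n_1}(\ZZ_2)_o$ an extra unramified binary piece appears at the top of the Jordan chain and replaces that factor by an additional $(1 - 2^{n_1+n}Y^2)$. Tracking the Hasse invariants $\bar\xi_2^{(0)}(B)$ and the dyadic normalization factors $\kappa(d_0,n,l)$ consistently through this case split is the delicate part. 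It is handled by direct verification in each sub-type, using the explicit dyadic Jordan formulas in Kitaoka \cite{Ki2} together with the standard evaluation of the $2$-adic Gauss sums that arise from the $\Theta$-block. Once the dyadic case analysis is recorded, the final formula in each of the sub-types of (2) is read off by direct comparison with the odd-prime template in (1).
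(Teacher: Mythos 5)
Your outline is correct and is essentially the paper's own route: the paper's entire proof of this lemma is the one-line citation ``The assertion follows from Lemma 9 of \cite{Ki1}'', and Kitaoka's Lemma 9 is exactly the evaluation, through the primitive local Siegel series $b_p^*$, of $G_p^{(0)}(B,Y)$ along the Jordan splitting $B=\Theta \bot pB_1$ that you sketch (including the base case $n_1=0$ and the dyadic case split between $S_{n_1}(\ZZ_2)_e$ and $S_{n_1}(\ZZ_2)_o$). The only inessential slip is your plan to ``track $\bar\xi_2^{(0)}(B)$ and the normalization factors $\kappa(d_0,n,l)$'': neither quantity enters this lemma at all --- they belong to the later formal power series of Rankin--Selberg and Koecher--Maass type, whereas $G_p^{(0)}$ depends only on $\xi_p(B)$ and $\xi=\chi((-1)^{(n-n_1)/2}\det\Theta)$.
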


\begin{proof} The assertion follows from Lemma 9 of \cite{Ki1}. 
\end{proof}  


For $A \in \call_{m,p}^{(0)}$, we define Andrianov's polynomial $B_p^{(0)}(v;A)$ as follows:  
$$B_p^{(0)}(v,A)=
\left\{\begin{array}{ll} (1+v)(1-\bar \xi_p^{(0)}(A)p^{-l/2}v)
\prod_{i=1}^{l/2-1}(1-p^{-2i}v^2) & \ {\rm if} \ l \ {\rm is \ even} \\ (1+v)
\prod_{i=1}^{(l-1)/2}(1-p^{-2i}v^2) & \ {\rm if} \ l \ {\rm is \ odd}
\end{array}\right.
$$ 
with $l=l^{(0)}_p(A)$. Here we understand that we have $B_p^{(0)}(v,A)=1$ if $l=0.$
Then by definition we have the following:

 
\begin{lems}
\label{lem.Andrianov-B}
Let $n$ be the fixed positive even integer. Let $B \in {\mathcal L}_{n,p}^{(0)}.$  Throughout (1) and (2), for  $\Theta \in GL_{n-n_1}(\ZZ_p)$ with $n_1$ even, put  $\xi=\chi((-1)^{(n-n_1)/2} \det \Theta)$. Here we make the convention that $\xi=1$ if $n_1=n$. \vspace*{1mm}
\begin{itemize}
\item[(1)] Let $p \not=2, $ and suppose that  $B=\Theta \bot pB_1$  with $d \in {\mathcal U}$ and $B_1 \in S_{n_1}(\ZZ_p)^{\mathrm{nd}}.$ 
Then 
\[
B_p^{(0)}(B,t)=\begin{cases}
1 & \text{ if } n_1=n \\
\displaystyle (1+t)  (1-\xi p^{(n_1-n)/2}t)\prod_{i=1}^{(n-n_1-2)/2}(1-p^{-2i}t^2), &  
\text{ if }  n_1 \text{is even and} n_1<n, \\
\displaystyle (1+t)\prod_{i=1}^{(n-n_1-1)/2}(1-p^{-2i}t^2), & 
\text{ if }   n_1 \text{ is odd}. 
\end{cases}.
\]
\item[(2)] Let $p=2,$ and suppose that $B=\Theta \bot 2B_1$  with $\Theta \in S_{n-n_1}(\ZZ_2)_e \cap  GL_{n-n_1}(\ZZ_2) $ and $B_1 \in S_{n_1}(\ZZ_2)^{\nd}.$  Then 
\begin{eqnarray*}
B_2^{(0)}(B,t)=\begin{cases}
1 & \text{ if } n_1=n\\
\displaystyle (1+t)(1-\xi 2^{(n_1-n)/2}t)\prod_{i=1}^{(n-n_1-2)/2}(1-2^{-2i}t^2), &  
\text{ if } n_1<n \text{ and }  B_1 \in S_{n_1}(\ZZ_2)_e , \\
\displaystyle (1+t)\prod_{i=1}^{(n-n_1-2)/2}(1-2^{-2i}t^2), & 
\text{ if }  B_1 \in S_{n_1}(\ZZ_2)_o.
\end{cases}
\end{eqnarray*}
\end{itemize}
\end{lems}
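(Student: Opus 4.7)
The plan is to proceed by direct application of the definition of $B_p^{(0)}(v,A)$, which is entirely determined by the pair $(l, \bar\xi) = (l_p^{(0)}(A), \bar\xi_p^{(0)}(A))$. The key input has already been recorded in the two paragraphs preceding the lemma, where these invariants are read off from any Jordan decomposition $B = \Theta \bot pB_1$ (respectively $B = \Theta \bot 2B_1$ for $p=2$). Hence the proof is essentially one of bookkeeping: determine $l$ (and in particular its parity) and, when $l$ is even, the sign $\bar\xi$ in each block decomposition, and then substitute into the piecewise definition.

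For part (1), with $p \neq 2$, the reduction $B \equiv \Theta \bot 0_{n_1} \pmod{p}$ shows that the radical of the reduced quadratic form has rank $n_1$ and its orthogonal complement is represented by $\Theta \bmod p$. Thus $l = n - n_1$, and when this is even one gets $\bar\xi = \chi((-1)^{(n-n_1)/2}\det\Theta) = \xi$. The empty-product convention handles $n_1 = n$; the case $n_1$ even with $n_1 < n$ substitutes into the $l$-even branch of the definition, rewriting $p^{-l/2}$ as $p^{(n_1-n)/2}$; the case $n_1$ odd substitutes into the $l$-odd branch using $(l-1)/2 = (n-n_1-1)/2$.

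For part (2), with $p = 2$, one works with $2^{-1}B = 2^{-1}\Theta \bot B_1 \in \call_{n,2}$ and reduces modulo $2$. The preceding paragraphs indicate how the effective rank and the hyperbolic type of the non-degenerate summand depend on the diagonal parity of $B_1$: one case leaves $l$ even with $\bar\xi = \xi$, while the other shifts the parity of $l$ so that $\bar\xi = 0$. Reading off $(l, \bar\xi)$ in each of the two subcases and substituting into the two-branch definition of $B_p^{(0)}$ then produces the two displayed formulas, with the case $n_1 = n$ again collapsing to the empty product.

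The only substantive step is the $p = 2$ bookkeeping, where one must carefully track how the diagonal parity of $B_1$, combined with the intervening $2^{-1}$ scaling, affects both the rank and the hyperbolic type of the non-degenerate part of $2^{-1}B \bmod 2$. Once these invariants are fixed in each case, the rest is a mechanical substitution into the defining formula for $B_p^{(0)}(v,A)$.
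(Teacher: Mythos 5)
Your proposal is correct and follows essentially the same route as the paper, which offers no argument beyond ``by definition'': one reads off $l_p^{(0)}(B)$ and $\bar\xi_p^{(0)}(B)$ from the Jordan-type decompositions described in the paragraphs preceding the lemma and substitutes into the piecewise definition of $B_p^{(0)}(v,A)$, the only delicate point being the $p=2$ case where the diagonal parity of $B_1$ shifts the rank/parity so that the $\bar\xi$ factor survives exactly when $B_1$ has even diagonal.
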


Let $m$ be a positive even integer. For an element   $T \in \call_{m,p}^{(0)}$.  put
$$R(T,X,t)=\sum_{W \in M_{m}(\ZZ_p)^{\nd}/GL_{m}(\ZZ_p)}  \widetilde F_p^{(0)}(T[W],X)t^{\nu(\det W)}.$$
This type of formal power series  was first introduced by Andrianov \cite{A} to study the standard $L$-function of Siegel modular form of integral weight. Therefore we call it the formal power series of Andrianov type. (See also B{\"o}cherer \cite{Bo}.)
The following proposition is due to \cite [Proposition 5.2]{KK15}.
   
\begin{props}
\label{prop.Andrianov-series} 
Let $m$ be a positive even integer.  Let $T \in \call_{m,p}^{(0)}.$ Then  
 $$\sum_{B \in {\mathcal L}_{m,p}^{(0)}}\frac{\widetilde F_p^{(0)}(B,X)\alpha_p(T,B)}{\alpha_p(B)}t^{\nu(\det B)}= t^{\nu(\det T)}R(T,X,p^{-m}t^2).$$
\end{props}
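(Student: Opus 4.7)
The plan is to convert the sum on the left-hand side, indexed by equivalence classes of $B$, into a sum over representations $W \in M_m(\ZZ_p)^{\nd}/GL_m(\ZZ_p)$ via the substitution $B = T[W]$. The main tool is a classical identity for local densities,
\[
\sum_{B \in \call_{m,p}^{(0)}/\sim} \frac{\alpha_p(T,B)}{\alpha_p(B)}\,\varphi(B) \;=\; \sum_{W \in M_m(\ZZ_p)^{\nd}/GL_m(\ZZ_p)} |\det W|_p^{\,m}\,\varphi(T[W]),
\]
valid for any $GL_m(\ZZ_p)$-invariant function $\varphi$ on the nondegenerate classes. Intuitively, the ratio $\alpha_p(T,B)/\alpha_p(B)$ records a weighted count of representations of $B$ by $T$, and the weight $|\det W|_p^m = p^{-m\nu(\det W)}$ arises as the $p$-adic volume of the neighborhood around $W$ contributing to the approximate representation sets $\mathcal{B}_a(T,B)$ in the definition of $\alpha_p(T,B)$.

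Granted this identity, I would apply it to $\varphi(B) = \widetilde F_p^{(0)}(B,X)\,t^{\nu(\det B)}$, which is $GL_m(\ZZ_p)$-invariant since both factors depend only on the equivalence class of $B$. This converts the left-hand side into
\[
\sum_{W \in M_m(\ZZ_p)^{\nd}/GL_m(\ZZ_p)} \widetilde F_p^{(0)}(T[W],X)\, p^{-m\nu(\det W)}\, t^{\nu(\det T[W])}.
\]
Next I would use the determinantal identity $\det T[W] = \det T \cdot (\det W)^2$, so that $\nu(\det T[W]) = \nu(\det T) + 2\nu(\det W)$. Factoring out the overall $t^{\nu(\det T)}$ and combining the remaining weights as $p^{-m\nu(\det W)}\,t^{2\nu(\det W)} = (p^{-m}t^2)^{\nu(\det W)}$, the resulting series matches the definition of $R(T,X,\,\cdot\,)$ at the argument $p^{-m}t^2$, which yields the desired identity.

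The main obstacle is the justification of the local density identity above: it requires carefully partitioning the finite approximation sets $\mathcal{B}_a(T,B)$ into $GL_m(\ZZ_p)$-orbits of representations $W$ and identifying the measure of each orbit as $|\det W|_p^m$ in the limit defining $\alpha_p$. This is a standard but nontrivial computation in the theory of quadratic forms over $\ZZ_p$ and is precisely the content of \cite[Proposition 5.2]{KK15}, whose argument transfers verbatim to the present setting. Once this combinatorial input is accepted, the remaining manipulations are purely formal.
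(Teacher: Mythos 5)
Your argument is correct and is essentially the paper's: the paper disposes of this proposition by citing \cite[Proposition 5.2]{KK15}, and the reduction you describe --- the local-density identity $\alpha_p(T,B)/\alpha_p(B)=\sum_{W}|\det W|_p^{m}$, summed over $W\in M_m(\ZZ_p)^{\nd}/GL_m(\ZZ_p)$ with $T[W]\sim B$, followed by $\nu(\det T[W])=\nu(\det T)+2\nu(\det W)$ and the change of variable $t\mapsto p^{-m}t^{2}$ --- is exactly the content of that reference. The exponent $m$ in the weight is the correct Jacobian for left multiplication on $M_m(\ZZ_p)$, so the formal manipulation closes as you claim.
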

   
The following theorem is due to \cite{A}.  

\begin{thms}
\label{th.Andrianov-identity}  
Let $T$ be an element of ${\mathcal L}_{n,p}^{(0)}.$ Then 
$$R(T,X,t)=\frac{B_p^{(0)}(T,p^{(n-1)/2}t)\widetilde G_p^{(0)}(T,X,t)} {\prod_{j=1}^{n}(1-p^{j-1}X^{-1}t)(1-p^{j-1}Xt)}.$$
\end{thms}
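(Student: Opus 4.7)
I plan to follow Andrianov's original approach, which derives the closed form by organizing the sum defining $R(T,X,t)$ via the Smith normal form of $W$ and then iteratively applying the Kitaoka induction formula of Lemma \ref{lem.induction-formula-Siegel-series}.

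The first step is to reorganize the sum according to the $GL_n(\ZZ_p)$-equivalence class of $B := T[W]$, giving
$$R(T,X,t) \;=\; \sum_{B \in \call_{n,p}^{(0)}/\sim} \widetilde F_p^{(0)}(B,X) \Bigl(\sum_{\substack{W \in M_n(\ZZ_p)^{\nd}/GL_n(\ZZ_p) \\ T[W]\sim B}} t^{\nu(\det W)}\Bigr).$$
The inner sum over $W$ is essentially a primitive local density generating function: parameterizing $W$ by its elementary divisors $(p^{a_1},\dots,p^{a_n})$ and using the M\"obius-type inversion between $\alpha_p$ and $\beta_p$, this factors as the product of a primitive weight (proportional to $\beta_p(B,T)/\alpha_p(B)$) with a geometric generating series over the free elementary divisors of the non-primitive piece of $W$.

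Next, I would insert Kitaoka's formula from Lemma \ref{lem.induction-formula-Siegel-series} to rewrite $\widetilde F_p^{(0)}(B,X)$ as a sum weighted by $\alpha_p(B',B)/\alpha_p(B')$ times $G_p^{(0)}(B',\,p^{-(n+1)/2}X)$. Interchanging the order of summation and invoking the composition identity for local densities collapses the sum over $B$, while the remaining sum over $B'$ reassembles, by the definition of $\widetilde G_p^{(0)}$, into $\widetilde G_p^{(0)}(T,X,t)$. The free variation of the elementary divisors of the non-primitive piece of $W$ produces the Satake-type denominator $\prod_{j=1}^n (1-p^{j-1}X^{-1}t)(1-p^{j-1}Xt)$; the pairing of the two factors $(1-p^{j-1}X^{\pm 1}t)$ is forced by the symmetry $\widetilde F_p^{(0)}(B,X)=\widetilde F_p^{(0)}(B,X^{-1})$ available for even $n$. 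Finally, the part of the sum governed by the non-unimodular piece of $T$ contributes Andrianov's polynomial $B_p^{(0)}(T,\,p^{(n-1)/2}t)$, whose explicit shape is dictated by Lemma \ref{lem.Andrianov-B}.

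The main obstacle is the degenerate case: when $T\not\in \tfrac12 GL_n(\ZZ_p)$, the elementary divisors of $T$ interact nontrivially with those of $W$, and one must carefully distinguish which $W$ give a primitive $B$ from those giving a degenerate one. This is particularly delicate at $p=2$, where the three dyadic subtypes (B.I)--(B.III) of the decomposition $T=\Theta\bot 2T_1$ require separate treatment. Verifying that these various contributions collapse into the single clean factor $B_p^{(0)}(T,\cdot)$ is the combinatorial heart of the argument, and it can be cross-checked against the explicit formula for $G_p^{(0)}(T,\cdot)$ supplied by Lemma \ref{lem.primitive-Siegel-series}.
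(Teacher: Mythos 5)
The paper offers no proof of this identity: it is quoted from Andrianov \cite{A} (``The following theorem is due to \cite{A}''), so there is no in-house argument to compare yours against. Judged on its own, your outline assembles the correct ingredients --- grouping the $W$-sum by the $GL_n(\ZZ_p)$-class of $B=T[W]$, converting the inner count into local densities, and feeding in the Kitaoka induction formula of Lemma \ref{lem.induction-formula-Siegel-series} --- and this is indeed the skeleton of the Andrianov--B\"ocherer argument; the first step is essentially the content of Proposition \ref{prop.Andrianov-series}. One small slip: with the paper's convention that $\alpha_p(A,B)$, $\beta_p(A,B)$ are densities of representations of $B$ \emph{by} $A$, the weight attached to the class of $B$ is $\alpha_p(T,B)/\alpha_p(B)$ (or $\beta_p(T,B')/\alpha_p(B')$ for the primitive piece), not $\beta_p(B,T)/\alpha_p(B)$ as you write.

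As a proof, however, the proposal has a genuine gap: the two steps that carry all the content are described rather than executed. First, you never show that after interchanging summation the $B'$-sum reassembles into $\widetilde G_p^{(0)}(T,X,t)$ over the denominator $\prod_{j=1}^n(1-p^{j-1}Xt)(1-p^{j-1}X^{-1}t)$; the remark that the pairing of the factors $(1-p^{j-1}X^{\pm 1}t)$ is ``forced by the symmetry $\widetilde F_p^{(0)}(B,X)=\widetilde F_p^{(0)}(B,X^{-1})$'' is a consistency check, not a derivation --- the denominator arises from the rationality of the local Hecke series of $GL_n$, i.e.\ from summing geometric series over elementary-divisor types of $W$ with specific $p$-power multiplicities, and computing those multiplicities is precisely the work to be done. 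Second, and more seriously, the emergence of the factor $B_p^{(0)}(T,p^{(n-1)/2}t)$ --- with that exact argument, and with the case analysis over $l_p^{(0)}(T)$ and $\bar\xi_p^{(0)}(T)$ including the dyadic subtypes (B.I)--(B.III) --- \emph{is} the assertion of the theorem, and you explicitly defer it as ``the combinatorial heart of the argument'' without performing it; cross-checking against Lemmas \ref{lem.primitive-Siegel-series} and \ref{lem.Andrianov-B} confirms the shape of the answer but does not produce it. To turn this into a proof you would have to carry out that computation in full (essentially reproving the rationality theorem of \cite{A} and \cite{Bo}), or else simply cite \cite{A} as the paper does.
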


For a variable $Y$ we introduce the symbol $Y^{1/2}$ so that $(Y^{1/2})^2=Y,$ and for an integer $a$, write $Y^{a/2}=(Y^{1/2})^a.$ 
For $\omega=\varepsilon^l$ define a formal power series $\widetilde R_{n}(d_0,\omega,X,Y,t)$ in $t$ by 
\begin{eqnarray*}
&& \widetilde R_{n}(d_0,\omega,X,Y,t)=\kappa(d_0,n,l)^{-1}Y^{\nu(d_0)/2}\sum_{B' \in {\mathcal L}_{n,p}^{(0)}(d_0)}\hspace*{-2.5mm}\frac{ \widetilde G_p^{(0)}(B',X,p^{-n-1}Yt^2)}{\alpha_p(B') } \\
&& \phantom{xxxxxxxxxxxx} \times(tY^{-1/2})^{\nu(\det B')} B_p^{(0)}(B',p^{-(n+3)/2}Yt^2) G_p^{(0)}(B',p^{-(n+1)/2}Y)\omega(B').
\end{eqnarray*}
This is an element of ${\CC}[X,X^{-1},Y^{1/2},Y^{-1/2}][[t]].$

 \begin{thms}
\label{th.local-RS-series} 
 We have 
 $$H_{n}(d_0,\omega,X,Y,t)=\frac{\widetilde R_{n}(d_0,\omega,X,Y,t)}{\prod_{j=1}^n(1-p^{j-1-n}XYt^2)(1-p^{j-1-n}X^{-1}Yt^2)}$$
 for $\omega=\varepsilon^l.$
\end{thms}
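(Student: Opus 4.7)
The plan is to substitute the inductive formula of Lemma~\ref{lem.induction-formula-Siegel-series} into the factor $\widetilde F_p^{(0)}(A,Y)$ in the definition of $H_n(d_0,\omega,X,Y,t)$, exchange the order of summation, and then recognize the remaining inner sum over $A$ as being of the Andrianov type so that it is evaluated in closed form by Proposition~\ref{prop.Andrianov-series} and Theorem~\ref{th.Andrianov-identity}.

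Concretely, applying Lemma~\ref{lem.induction-formula-Siegel-series} with $m=n$, $B=A$ and $X\mapsto Y$, one expands
\[
\widetilde F_p^{(0)}(A,Y)=\sum_{B'}Y^{-\frke^{(0)}(B')}\frac{\alpha_p(B',A)}{\alpha_p(B')}G_p^{(0)}\!\bigl(B',p^{-(n+1)/2}Y\bigr)\bigl(p^{-1}Y\bigr)^{(\nu(\det A)-\nu(\det B'))/2}.
\]
Substituting this into the defining sum for $H_n$ and swapping summation orders (a formal manipulation in $\CC[X^{\pm1},Y^{\pm1/2}][[t]]$), the inner sum over $A$ takes the form
\[
\omega(B')\sum_{A}\frac{\widetilde F_p^{(0)}(A,X)\alpha_p(B',A)}{\alpha_p(A)}\bigl(p^{-1/2}Y^{1/2}t\bigr)^{\nu(\det A)},
\]
to which Proposition~\ref{prop.Andrianov-series} applies directly (with $T=B'$ and its dummy variable replaced by $p^{-1/2}Y^{1/2}t$), producing
\[
\omega(B')\bigl(p^{-1/2}Y^{1/2}t\bigr)^{\nu(\det B')}R\bigl(B',X,\,p^{-n-1}Yt^2\bigr).
\]

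Two points make this step legitimate. First, if $\alpha_p(B',A)\neq 0$ with $A,B'\in\mathcal{L}_{n,p}^{(0)}$ both non-degenerate of degree $n$, then $A=B'[W]$ for some $W\in M_n(\ZZ_p)\cap GL_n(\QQ_p)$; hence $A\sim_{\QQ_p}B'$, which implies (a) $\omega(A)=\omega(B')$ (the Hasse invariant is a $\QQ_p$-equivalence invariant), and (b) the surviving $B'$ automatically lie in $\mathcal{L}_{n,p}^{(0)}(d_0)$, since $(-1)^{n/2}\det A$ and $(-1)^{n/2}\det B'$ agree in $\QQ_p^\times/(\QQ_p^\times)^2$. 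Second, for $B'\in\mathcal{L}_{n,p}^{(0)}(d_0)$ one has $\frke^{(0)}(B')=(\nu(\det B')-\nu(d_0))/2$, which makes the aggregate $B'$-weight collapse to $Y^{\nu(d_0)/2}(tY^{-1/2})^{\nu(\det B')}$---precisely the shape appearing in the definition of $\widetilde R_n$.

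Finally, Theorem~\ref{th.Andrianov-identity} evaluates $R(B',X,p^{-n-1}Yt^2)$, contributing the numerator factors $B_p^{(0)}(B',p^{-(n+3)/2}Yt^2)$ and $\widetilde G_p^{(0)}(B',X,p^{-n-1}Yt^2)$ together with the $B'$-independent denominator $\prod_{j=1}^n(1-p^{j-1-n}XYt^2)(1-p^{j-1-n}X^{-1}Yt^2)$, which pulls out of the $B'$-sum. Reading the resulting expression against the definition of $\widetilde R_n(d_0,\omega,X,Y,t)$ term by term gives the claimed identity.

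The main obstacle is the careful bookkeeping of $p$- and $Y$-exponents through the three consecutive substitutions: the inductive formula for $\widetilde F_p^{(0)}$, the Andrianov-series summation, and the Andrianov identity for $R$. The conceptually more delicate---though still routine---point is the transfer of the Hasse invariant $\omega=\varepsilon$ from $A$ to $B'$ via $\QQ_p$-equivalence, which must be invoked because only $W\in M_n(\ZZ_p)\cap GL_n(\QQ_p)$ (not $W\in GL_n(\ZZ_p)$) is available.
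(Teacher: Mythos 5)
Your proof is correct and follows essentially the same route as the paper's: expand $\widetilde F_p^{(0)}(\cdot,Y)$ by Lemma~\ref{lem.induction-formula-Siegel-series}, observe that $\alpha_p(B',B)\neq 0$ forces $B$ and $B'$ into the same set $\call_{n,p}^{(0)}(d_0)$, interchange the two sums, and evaluate the inner sum by Proposition~\ref{prop.Andrianov-series} followed by Theorem~\ref{th.Andrianov-identity}, finally matching the result against the definition of $\widetilde R_n$. Your explicit justification that $\omega(B)=\omega(B')$ via the $\QQ_p$-equivalence $B=B'[W]$ (needed when $\omega=\varepsilon$), and the identity $\frke^{(0)}(B')=(\nu(\det B')-\nu(d_0))/2$ collapsing the $Y$-exponents, are exactly the details implicit in the paper's computation.
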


 \begin{proof}  By Lemma \ref{lem.induction-formula-Siegel-series}, we have
 \begin{eqnarray*}
\lefteqn{ H_{n}(d_0,\omega,X,Y,t)=\sum_{B \in {\mathcal L}_{n,p}^{(0)}(d_0)}\frac{\widetilde F_p^{(0)}(B,X)}{\alpha_p(B)} \omega(B)t^{\nu(\det B)} } \\
&&\phantom{xxxxxxxxx} \times \sum_{B' \in {\mathcal L}_{n,p}^{(0)}} \frac{Y^{-\frke^{(0)}(B')}G_p^{(0)}(B',p^{-(n+1)/2}Y) \alpha_p(B',B)}{\alpha_p(B')} (p^{-1}Y)^{(\nu(\det B)-\nu(\det B'))/2}.
\end{eqnarray*}
Let $B$ and $B'$ be elements of ${\mathcal L}_{n,p}^{(0)},$ and suppose that $\alpha_p(B',B) \not=0.$ Then we note that $B \in {\mathcal L}_{n,p}^{(0)}(d_0)$ if and only if $B' \in {\mathcal L}_{n,p}^{(0)}(d_0).$ Hence by Proposition \ref{prop.Andrianov-series} and Theorem \ref{th.Andrianov-identity} we have 
 \begin{eqnarray*}
&&H_{n}(d_0,\omega,X,Y,t) \\
&&= \sum_{B' \in {\mathcal L}_{n,p}^{(0)}(d_0)} \tfrac {G_p^{(0)}(B',p^{-(n+1)/2}Y) Y^{-\frke^{(0)}(B')}}{\alpha_p(B')} (pY^{-1})^{\nu(\det B')/2}\omega(B') \times \sum_{B \in {\mathcal L}_{n,p}^{(0)}} \tfrac {\widetilde F_p^{(0)}(B,X) \alpha_p(B',B)}{\alpha_p(B)} (t^2p^{-1}Y)^{\tfrac {\nu(\det B)}2} \\
&&=\sum_{B' \in {\mathcal L}_{n,p}^{(0)}(d_0)} \frac{ G_p^{(0)}(B',p^{-(n+1)/2}Y)Y^{-\frke^{(0)}(B')} }{\alpha_p(B') } t^{\nu(\det B')}\omega(B')R(B',X,t^2Yp^{-n-1}) \\
&&=\sum_{B' \in {\mathcal L}_{n,p}^{(0)}(d_0)} \tfrac {\widetilde G_p^{(0)}(B',X,p^{-n-1}Yt^2)}{\alpha_p(B')} \omega(B') Y^{\nu(d_0)/2} (tY^{-1/2})^{\nu(\det B')} \times \tfrac {B_p^{(0)}(B',p^{-(n+3)/2}Yt^2) G_p^{(0)}(B',p^{-(n+1)/2}Y)}{\prod_{j=1}^n(1-p^{j-2-n}XYt^2)(1-p^{j-2-n}X^{-1}Yt^2)}\\
&&= \frac{\widetilde R_{n}(d_0,\omega,X,Y,t)}{ \prod_{j=1}^n(1-p^{j-2-n}XYt^2)(1-p^{j-2-n}X^{-1}Yt^2)}
\end{eqnarray*} 

\end{proof}

\subsubsection{Formal power series of Koecher-Maass type and of modified Koecher-Maass type} 

Let $r$ be an  even positive integer. For $d_0 \in {\mathcal F}_p$ and $l=0,1$, let $\kappa(d_0,r,l)$ be the rational  number defined in 
Section 4.1. We also define $\kappa(d_0,r-1,l)$ as
\begin{align*}
 \kappa(d_0,r-1,l)&= \{(-1)^{lr(r-2)/8} 2^{-(r-2)(r-1)/2}\}^{\delta_{2,p}} \times ((-1)^{r/2},(-1)^{r/2}d_0 )_p^l\,\, p^{-(r/2-1)l\nu(d_0)}. 
\end{align*}
We  define a  formal power series $P_{r-j}^{(j)}(d_0,\omega,\xi,X,t)$ in $t$ by 
\begin{align*}
& P_{r-j}^{(j)}(d_0,\omega,\xi,X,t)=\kappa(d_0,r-j,l)^{-1} t^{(-r+j+1)\delta_{2,p}j}\times \sum_{B \in {\mathcal L}_{r,p}^{(j)}(d_0)}  \hspace*{-2.5mm}\frac{\widetilde F_p^{(j)}(B,\xi,X)}{\alpha_p(B)}\omega(B)t^{\nu(\det B)}
\end{align*}
for $\omega=\varepsilon^l$ with $l=0,1.$ 
In particular we put $P_{r-j}^{(j)}(d_0,\omega,X,t)=P_{r-j}^{(j)}(d_0,\omega,1,X,t).$
 This type of formal power series appears in an explicit formula of the Koecher-Maass series associated with 
 the Siegel Eisenstein series and the Duke-Imamo{\=g}lu-Ikeda lift. Therefore we say that this formal power series is of Koecher-Maass type  (cf. \cite{KK15}). 
Moreover for $d_0,r,j,\xi$ above and  a positive integer $m$, we also define a formal power series $\widetilde P_{r}(m;d_0,\omega,\xi,X,Y,t)$  in $t$ by 
\begin{align*}
&\widetilde P_{r-j}^{(j)}(m;d_0,\omega,\xi,X,Y,t)=\kappa(d_0,r-j,l)^{-1}Y^{\nu(d_0)/2}(tY^{-1/2})^{(-r+j+1)\delta_{2,p}j}\\
&\phantom{xxxxxxxxxxxsssssxxx} \times \sum_{B' \in {\mathcal L}_{r,p}^{(j)}(d_0)}\hspace*{-2.5mm} \frac{\widetilde G_p^{(j)}(B',\xi,X,p^{-m}t^2Y)}{\alpha_p(B') } \omega(B')(tY^{-1/2})^{\nu(\det (B'))}
\end{align*}
for $\omega=\varepsilon^l.$ Here we make the convention that $\widetilde P_0(n;d_0,\omega,\xi,X,Y,t)=1$ or $0$ according as $\nu(d_0)=0$ or not.
  The relation between $\widetilde P_{r-j}^{(j)}(m;d_0,\omega,\xi,X,Y,t)$ and 
$P_{r-j}^{(j)}(d_0,\omega,\xi,X,t)$ will be given in the following proposition (cf. \cite{KK15}, Proposition 5.5):

\begin{props}
\label{prop.local-modified-KM} 
Let $r$ be a positive even integer. Let $\omega=\varepsilon^l$ with $l=0,1$, and $j=0,1$.  Then 
\begin{align*}
&\widetilde P_{r-j}^{(j)}(m;d_0,\omega,\xi,X,Y,t)=Y^{\nu(d_0)/2}P_{r-j}^{(j)}(d_0,\omega,\xi,X,tY^{-1/2}) \prod_{i=1}^{r-j} (1-t^4p^{-m-r+j-2+i}).
\end{align*}
\end{props}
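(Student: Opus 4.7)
The plan is to start from the definition of $\widetilde{P}_{r-j}^{(j)}$ and unfold the factor $\widetilde{G}_p^{(j)}(B',\xi,X,p^{-m}t^{2}Y)$ using its defining alternating expansion
\[
\widetilde{G}_p^{(j)}(B',\xi,X,p^{-m}t^{2}Y) = \sum_{i=0}^{r-j}(-1)^{i}p^{i(i-1)/2}(p^{-m}t^{2}Y)^{i}\sum_{D \in GL_{r-j}(\ZZ_p) \backslash {\mathcal D}_{r-j,i}} \widetilde{F}_p^{(j)}(B'[D^{-1}],\xi,X).
\]
After interchanging orders of summation, I would reparametrize by setting $B = B'[D^{-1}]$, so that $B' = B[D]$; as $[B']$ varies over $GL$-equivalence classes in ${\mathcal L}_{r-j,p}^{(j)}(d_0)$ and $[D]$ varies over $GL_{r-j}(\ZZ_p) \backslash {\mathcal D}_{r-j,i}$, the class of $B$ ranges over the same set, since $\det B$ and $\det B'$ differ only by the square $p^{2i}$. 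Under this substitution one has $\nu(\det B') = \nu(\det B) + 2i$, giving $(tY^{-1/2})^{\nu(\det B')} = (tY^{-1/2})^{\nu(\det B)}(t^{2}Y^{-1})^{i}$, and $\omega(B') = \omega(B)$ because the Hasse invariant depends only on the $\QQ_p$-equivalence class.

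The key technical input is a summation identity of the shape
\[
\sum_{[D] \in GL_{r-j}(\ZZ_p) \backslash {\mathcal D}_{r-j,i}} \frac{1}{\alpha_p(B[D])} = \frac{1}{\alpha_p(B)} \binom{r-j}{i}_{\!p}\, p^{i(j-r-1)},
\]
where $\binom{r-j}{i}_{\!p}$ denotes the Gaussian $q$-binomial coefficient. This is the delicate step, essentially the local density computation underlying \cite[Proposition 5.5]{KK15}, and it requires carefully tracking how $\alpha_p$ transforms under sublattice inclusions of index $p^{i}$. Granted this identity, the inner sum over $i$ separates from the sum over $[B]$ and assumes the form $\sum_{i=0}^{r-j}(-1)^{i}p^{i(i-1)/2}\binom{r-j}{i}_{\!p}(t^{4}p^{-m-r+j-1})^{i}$, which by the $q$-binomial theorem equals $\prod_{l=0}^{r-j-1}(1 - t^{4}p^{-m-r+j-1+l}) = \prod_{i=1}^{r-j}(1 - t^{4}p^{-m-r+j-2+i})$.

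Finally, the residual sum $\sum_{[B]}\widetilde{F}_p^{(j)}(B,\xi,X)\omega(B)(tY^{-1/2})^{\nu(\det B)}/\alpha_p(B)$, combined with the overall prefactors $\kappa(d_0,r-j,l)^{-1}Y^{\nu(d_0)/2}(tY^{-1/2})^{(-r+j+1)\delta_{2,p}j}$ inherited from the definition of $\widetilde{P}_{r-j}^{(j)}$, reconstitutes $Y^{\nu(d_0)/2}P_{r-j}^{(j)}(d_0,\omega,\xi,X,tY^{-1/2})$ by direct comparison with the definition of $P_{r-j}^{(j)}$. The hard part will be the intermediate summation identity for $\sum_{[D]}\alpha_p(B[D])^{-1}$; everything else consists of routine bookkeeping of exponents, which nevertheless must be carried out with care to confirm that the $p$-powers combine exactly to produce the argument $t^{4}p^{-m-r+j-1}$ of the $q$-binomial.
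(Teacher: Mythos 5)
Your skeleton (expand $\widetilde G_p^{(j)}$ by its definition, interchange the two sums, and recognize a $q$-binomial expansion) is the right shape, and the per-term factor $\binom{r-j}{i}_p\,p^{i(j-r-1)}$ you aim to produce for each power of $u=p^{-m}t^{2}Y$ is indeed exactly what the asserted product requires. However, the step you single out as the crux is false, and the unfolding that forces you to need it is also incorrect. The identity $\sum_{[D]\in GL_n(\ZZ_p)\backslash\mathcal{D}_{n,i}}\alpha_p(B)/\alpha_p(B[D])=\binom{n}{i}_p\,p^{-i(n+1)}$ already fails for $n=2$, $i=1$, $B=\begin{pmatrix}0&1/2\\1/2&0\end{pmatrix}$ and $p\geq 5$: one computes $\alpha_p(B)=1-p^{-1}$; two of the $p+1$ index-$p$ sublattices carry the form $pB$ with $\alpha_p(pB)=p^{3}(1-p^{-1})$, and the remaining $p-1$ carry forms equivalent to $\mathrm{diag}(u,u'p^{2})$ with $\alpha_p=2p^{2}$, so the left-hand side equals $(p^{2}-2p+5)/(2p^{3})$ while the right-hand side is $(2p+2)/(2p^{3})$; these agree only at $p=3$. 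The reason your reduction demands this false identity is that the substitution $B=B'[D^{-1}]$ is not a bijection between the index sets $\{([B'],[D])\}$ and $\{([B],[D])\}$: the number $N(B',B)$ of superlattices of $B'$ of type $i$ isomorphic to $B$ and the number $M(B,B')$ of sublattices of $B$ of type $i$ isomorphic to $B'$ are genuinely different, and are related by the mass identity
\begin{equation*}
N(B',B)\,|\det B|_p^{(n+1)/2}\alpha_p(B)\;=\;M(B,B')\,|\det B'|_p^{(n+1)/2}\alpha_p(B'),
\end{equation*}
because it is $|\det|_p^{(n+1)/2}\alpha_p$, not $\alpha_p$ itself, that is proportional to the volume of the stabilizer of the lattice --- this is precisely why the factor $(\det A)^{(n+1)/2}$ appears in Siegel's mass formula (\ref{kappa}).

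If you carry out the interchange using this relation instead, the coefficient of $\widetilde F_p^{(j)}(B,\xi,X)$ collapses to $\alpha_p(B)^{-1}\omega(B)(tY^{-1/2})^{\nu(\det B)}$ times $s^{2i}p^{-i(n+1)}\cdot\#\bigl(GL_n(\ZZ_p)\backslash\mathcal{D}_{n,i}\bigr)$ with $s=tY^{-1/2}$ and $n=r-j$: the sum over $[D]$ degenerates to the pure coset count $\#\bigl(GL_n(\ZZ_p)\backslash\mathcal{D}_{n,i}\bigr)=\binom{n}{i}_p$ (the number of $(n-i)$-dimensional subspaces of $\FF_p^{\,n}$), and the exponent $p^{-i(n+1)}$ comes from the determinant normalization in the mass identity rather than from any sum of density ratios. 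From that point your $q$-binomial computation goes through verbatim and yields $\prod_{i=1}^{r-j}(1-t^{4}p^{-m-r+j-2+i})$, in agreement with the statement (which the paper itself imports from \cite[Proposition 5.5]{KK15} without reproving). So your endgame and bookkeeping of exponents are fine, but as written the proof would founder on an intermediate identity that is not true; the ``key technical input'' must be replaced by the superlattice/sublattice mass-transfer relation above. (A further point needing care, which you do not address, is the case $j=1$, where $\widetilde F_p^{(1)}$ is evaluated on the augmented matrix $(T[D^{-1}])^{(1)}$ of size $r$ while $D$ acts on matrices of size $r-1$; the counting argument must be run on the size-$(r-1)$ lattices.)
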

We also recall an explicit formula for $P_{r-j}^{(j)}(d_0,\iota,\xi,X,t)$ (cf. \cite{KK15}, Corollary 5.7).
\begin{thms}
\label{th.explicit-local-KM}
Let $d_0 \in {\mathcal F}_{p}$ and $\xi_0=\chi(d_0).$ Let $\xi=\pm 1.$ Let $m$ be even. Put $\phi_r(x)=\prod_{i=1}^r (1-x^i)$ for a positive integer $r.$ 
Then
\begin{itemize}
\item[(1)] 
  \begin{align*}
&P_m^{(0)}(d_0,\iota,\xi,X,t)=\frac{(p^{-1}t)^{\nu(d_0)}}{\phi_{m/2-1}(p^{-2})(1-p^{-m/2}\xi_0)}\\
 & \phantom{xxxxxxxxsssx} \times  \frac{(1+t^2p^{-m/2-3/2}\xi)}{ 
  (1-p^{-2}Xt^2)(1-p^{-2}X^{-1}t^2)\prod_{i=1}^{m/2} (1-t^2p^{-2i-1}X)(1-t^2p^{-2i-1}X^{-1})  }\\
& \phantom{xxxxxxxxxxss} \times (1+t^2p^{-m/2-5/2}\xi\xi_0^2)-
  \xi_0 t^2p^{-m/2-2}(X+X^{-1}+p^{1/2-m/2}\xi +p^{-1/2+m/2}\xi)
\end{align*}
  \begin{align*}P_m^{(0)}(d_0,\varepsilon,\xi,X,t)=\frac{1}{\phi_{m/2-1}(p^{-2})(1-p^{-m/2}\xi_0)}\frac{\xi_0^2}{ 
  \prod_{i=1}^{m/2} (1-t^2p^{-2i}X)(1-t^2p^{-2i}X^{-1})  }. \end{align*}
   \noindent
\item[(2)] 
 \begin{align*} 
&P_{m-1}^{(1)}(d_0, \iota,\xi,X,t) \\
&=\frac{(p^{-1}t)^{\nu(d_0)} (1-\xi_0 t^2 p^{-5/2}\xi)}{ 
  (1-t^2p^{-2}X)(1-t^2p^{-2}X^{-1})\prod_{i=1}^{(m-2)/2} (1-t^2p^{-2i-1}X)(1-t^2p^{-2i-1}X^{-1}) \phi_{(m-2)/2}(p^{-2})},\\
&P_{m-1}^{(1)}(d_0,\varepsilon,\xi,X,t)=\frac {(p^{-1}t)^{\nu(d_0)} (1-\xi_0 t^2 p^{-1/2-r} \xi)}{ 
  \prod_{i=1}^{m/2} (1-t^2p^{-2i}X)(1-t^2p^{-2i}X^{-1}) \phi_{(m-2)/2}(p^{-2})}.
\end{align*}
\end{itemize}
\end{thms}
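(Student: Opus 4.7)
My plan is to parametrize each class $B \in {\mathcal L}_{r,p}^{(j)}(d_0)/\sim$ by its Jordan decomposition and then evaluate every ingredient $\widetilde F_p^{(j)}(B,\xi,X)$, $\alpha_p(B)$, and $\omega(B)$ as a multiplicative function of the Jordan components, so that the generating function over $B$ factors into a product of geometric-series sums over the possible scales.

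First, since $(-1)^{[(r-j+1)/2]}\det B$ agrees with $d_0$ up to a square in $\ZZ_p^{\times}$, every class can be written (over $\ZZ_p$) as a block sum $\Theta \perp p B_1$ of the sort appearing in Lemmas \ref{lem.primitive-Siegel-series} and \ref{lem.Andrianov-B}, where $\Theta$ is a unimodular block carrying the discriminant $d_0$ and $B_1$ is an arbitrary half-integral form of smaller rank. Iterating this splitting, i.e., using the full Jordan decomposition, presents the sum $\sum_B$ as an iterated sum over unimodular blocks at scales $p^0, p^1, p^2, \ldots$; the constraint $(-1)^{(r-j)/2}\det B = d_0 p^{2r}$ forces the total valuation to equal $\nu(d_0)+2r$ and fixes the admissible Jordan types.

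Next, on each such $B$, Lemma \ref{lem.primitive-Siegel-series} gives $G_p^{(0)}(B,Y)$ in closed form, and Kitaoka's inductive formula for $F_p(B,X)$ yields $\widetilde F_p^{(j)}(B,\xi,X)$ as a product of factors indexed by Jordan components (the effect of the twist by $\xi$ enters via the normalization $\widetilde F_p^{(j)}(B,\xi,X) = X^{-\frke^{(j)}(B)}F_p^{(j)}(B,\xi p^{(-r+1)/2}X)$). The classical local-density formula expresses $\alpha_p(B)$ as a product of Gauss-sum and index factors, again Jordan-block by Jordan-block, and for $\omega = \varepsilon$ the Hasse invariant distributes across the decomposition via the Hilbert symbol. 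Substituting, the summand factorises, and summing blocks at scale $p^i$ produces the factors $(1-t^2 p^{-2i-1}X)^{-1}(1-t^2 p^{-2i-1}X^{-1})^{-1}$ in the $\iota$ case (and the shifted factors with exponent $-2i$ in the $\varepsilon$ case), while the ``tame'' unimodular block contributes $(1-p^{-m/2}\xi_0)^{-1}\phi_{m/2-1}(p^{-2})^{-1}$ together with a remainder numerator depending on $\xi$, $\xi_0$ and the parity data. In the $(0,\varepsilon)$ and $(1,\varepsilon)$ cases the twist by $\varepsilon$ kills the cross terms between the two $\pm$ choices in the unimodular block, which is why only the simple factor $\xi_0^2$, resp.\ $(1-\xi_0 t^2 p^{-1/2-r}\xi)$, survives in the numerator there.

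The main obstacle is the combinatorial bookkeeping at $p=2$, where the Jordan decomposition splits into the five types (A.I), (A.II), (B.I), (B.II), (B.III), each contributing differently to $\bar\xi_p^{(0)}(B)$, to $e(B)$, and to the constant $\kappa(d_0,r-j,l)$ through the Hilbert-symbol factor $((-1)^{r/2}2,d_0)_2^l$; one must also handle the shift $t^{(-r+j+1)\delta_{2,p}j}$ introduced in the definition of $P_{r-j}^{(j)}$. A secondary technical point is to match the $\xi = -1$ specialization correctly: since $\widetilde F_p^{(j)}(B,-1,X) \neq \widetilde F_p^{(j)}(B,-X)$, the verification that the numerator for $(j,\omega) = (0,\iota)$ collapses to the bilinear expression $(1+t^2 p^{-m/2-3/2}\xi)(1+t^2 p^{-m/2-5/2}\xi\xi_0^2) - \xi_0 t^2 p^{-m/2-2}(X+X^{-1}+p^{1/2-m/2}\xi+p^{-1/2+m/2}\xi)$ requires a careful tracking of signs; once this is done, the remaining manipulations are routine geometric-series identities.
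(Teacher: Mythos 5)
The paper does not actually prove this statement: it is recalled verbatim from \cite{KK15}, Corollary 5.7, so there is no internal proof to compare your argument against. Judged on its own terms, your sketch rests on a step that fails. You propose to evaluate $\widetilde F_p^{(j)}(B,\xi,X)$ ``as a product of factors indexed by Jordan components'' so that the sum over $B$ splits into independent geometric series over the scales $p^0,p^1,p^2,\dots$. But the Siegel series polynomial $F_p(B,X)$ is \emph{not} multiplicative with respect to orthogonal (Jordan) splittings: there is no identity of the form $F_p(A\perp B,X)=F_p(A,X)F_p(B,X)$, and even the exponent $\frke_p$ entering the normalization $\widetilde F_p^{(j)}(B,\xi,X)=X^{-\frke^{(j)}(B)}F_p^{(j)}(B,\xi p^{(-r+1)/2}X)$ is not additive over blocks. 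What Kitaoka's Lemma 9 gives (and what Lemma \ref{lem.primitive-Siegel-series} of the paper records) is a closed form only for the \emph{primitive} polynomial $G_p^{(0)}(B,Y)$ of a single splitting $\Theta\perp pB_1$, depending on coarse invariants; it does not iterate down the Jordan chain to factor $F_p$ itself. Consequently the summand in $P_{r-j}^{(j)}$ does not factor, and the claimed reduction to per-scale geometric series — which is where all the denominators $(1-t^2p^{-2i-1}X^{\pm1})$ are supposed to come from — does not go through as described.

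The actual derivation in \cite{KK15} circumvents exactly this obstruction: one first converts the sum over $B$ into a sum involving the primitive series $G_p^{(j)}$ via the induction formula of the type in Lemma \ref{lem.induction-formula-Siegel-series} (summing $\alpha_p(B',B)/\alpha_p(B')$ over auxiliary classes $B'$), and then invokes the explicit formulas for the resulting zeta functions of symmetric matrices due to Ibukiyama--Saito \cite{IS95} together with the Koecher--Maass computations of \cite{KK14}. If you want a self-contained proof you would need to reproduce that chain (or an equivalent recursion in $m$), not a direct Jordan-block factorization. Your remarks about the $p=2$ case distinctions, the constant $\kappa(d_0,r-j,l)$, and the subtlety that $\widetilde F_p^{(j)}(T,-1,X)\neq\widetilde F_p^{(j)}(T,-X)$ are all pertinent, but they are downstream of the structural gap above.
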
  
 
\bigskip
Now let $r$ be an even integer. Then  we define  a partial series $ Q_{r-j}^{(j)}(m;d_0,\omega,\xi,X,Y,t)$ of $\widetilde P_{r-j}^{(j)}(m;d_0,\omega,\xi,X,Y,t)$ as follows: First let $p\not=2.$ Then put 
\begin{align*}
&Q_{r}^{(0)}(m;d_0,\varepsilon^l,\xi,X,Y,t)=Y^{\nu(d_0)/2} \\
 & \phantom{xxxxxx} \times \sum_{B' \in S_{r}(\ZZ_p,d_0) \cap S_{r}(\ZZ_p)} \frac{\widetilde G_p^{(0)}(pB',\xi,X,p^{-m}t^2Y)}{ \alpha_p(pB') } \varepsilon(pB')^l(tY^{-1/2})^{\nu(\det pB')},
 \end{align*}
\begin{align*}
&Q_{r-1}^{(1)}(m;d_0,\varepsilon^l,\xi,X,Y,t)=\kappa(d_0,r-1,l)^{-1}Y^{\nu(d_0)/2} \\
 &\phantom{xxxxxx} \times \sum_{B' \in p^{-1}S_{r-1}(\ZZ_p,d_0) \cap S_{r-1}(\ZZ_p)} \frac{\widetilde G_p^{(1)}(pB',\xi,X,p^{-m}t^2Y)}{\alpha_p(pB') } \varepsilon(pB')^l(tY^{-1/2})^{\nu(\det pB')}.
 \end{align*}
Next let $p=2.$  Then put
\begin{align*}
& Q_{r-1}^{(1)}(m;d_0,\varepsilon^l,\xi,X,Y,t)=\kappa(d_0,r,l)^{-1}(tY^{-1/2})^{2-r}Y^{\nu(d_0)/2} \\
& \phantom{xxxxxx} \times \sum_{B' \in S_{r-1}(\ZZ_2,d_0) \cap S_{r-1}(\ZZ_2)} \frac{\widetilde G_2^{(1)}(4B',\xi,X,2^{-m}t^2Y)}{ \alpha_2(4B') } \varepsilon(4B')^l(tY^{-1/2})^{\nu(\det (4B'))},
\end{align*} 
\begin{align*}
&Q_{r}^{(0)}(m;d_0,\varepsilon^l,\xi,X,Y,t)=\kappa(d_0,r,l)^{-1}Y^{\nu(d_0)/2} \\ 
&\phantom{xxxxxx} \times \sum_{B' \in  S_{r}(\ZZ_2,d_0) \cap S_r(\ZZ_2)_e} \frac{\widetilde G_2^{(0)}(2B',\xi,X,2^{-m}t^2Y)}{\alpha_2(2B') } \varepsilon(2B')^l(tY^{-1/2})^{\nu(\det (2B'))}. 
\end{align*}
Here we make the convention that $Q_{0}^{(0)}(n;d_0,\varepsilon^l,\xi,X,Y,t)=1$  or $0$ according as $\nu(d_0)=0$ or not. 
 To consider the relation between 
$\widetilde P_{r-j}^{(j)}(m;d_0,\varepsilon^l,\xi,X,Y,t) \ $ and $\ Q_{r-j}^{(j)}(m;d_0,\varepsilon^l,\xi,X,Y,t),$ 
and to express $\widetilde R_{n}(d_0,\varepsilon^l,X,Y,t)$ in terms of 
$\widetilde P_{r-j}^{(j)}(m;d_0,\varepsilon^l,\xi,X,Y,t),$ we provide some more preliminary results. 
Henceforth, for a while, we abbreviate $S_r(\ZZ_p)$ and $S_r(\ZZ_p,d)$ as $S_{r,p}$ and $S_{r,p}(d),$ respectively.
Furthermore we abbreviate $S_r(\ZZ_2)_x$ and $S_r(\ZZ_2,d)_x$ as $S_{r,2;x}$ and $S_{r,2}(d)_x,$ respectively, for $x=e,o.$  

 Let $\widetilde R_{n}(d_0,\omega,X,Y,t)$ be the formal power series defined at the beginning of Section 5. We express $\widetilde R_{n}(d_0,\omega,X,Y,t)$ in terms of  $Q_{2r}^{(0)}(n;d_0d, \omega, \chi(d), X,Y,t)$ and $Q_{2r+1}^{(1)}(n;d_0,\omega,1, X,Y,t).$ Henceforth, for $d_0 \in {\mathcal F}_p$ and non-negative integers $m,r$ such that $r \le m,$ put
 ${\mathcal U}(m,r,d_0)=\{1\},{\mathcal U} \cap \{d_0\},$ or ${\mathcal U}$ according as $r=0,r=m,$ or $1 \le  r \le m-1.$ 
\begin{thms}    
\label{th.formula-for-R} 
Let $d_0 \in {\mathcal F}_p$, and $\xi_0=\chi(d_0).$  For $d \in {\mathcal U}(n,n-2r,d_0)$ put
$$D_{2r}(d,Y,t)= (1+p^{r-1/2}\chi(d) Y)(1-p^{-n-3/2+r}\chi(d)Yt^2) (1+p^{-n/2+r}\chi(d)).$$
\noindent
\begin{enumerate}
\item[{\rm (1)}] Let $\omega=\iota,$ or  $\nu(d_0)=0.$ Then  
\begin{align*}
&\widetilde R_{n}(d_0,\omega,X,Y,t) =  \sum_{r=0}^{n/2} \frac{\prod_{i=m_0}^{r-1} (1-p^{2i-1}Y^2) \prod_{i=1}^{(n-2r)/2}(1-p^{-2i-n-1}Y^2t^4)}{(1+p^{-1/2}\xi_0Y)(1 -p^{-(n+3)/2} Yt^2)\phi_{(n-2r)/2}(p^{-2})}\\
&\phantom{xxxxxxxxxxxxx} \times \sum_{d \in {\mathcal U}(n,n-2r,d_0)} \frac{D_{2r}(d,Y,t)}{2^{1-\delta_{0,r}}}\widetilde Q_{2r}^{(0)}(n;d_0d, \omega, \chi(d), X,Y,t)\\
&\phantom{xxxx} + \sum_{r=1}^{(n-2)/2} \frac{\prod_{i=m_0}^{r} (1-p^{2i-1}Y^2) \prod_{i=1}^{(n-2r)/2}(1-p^{-2i-n-1}Y^2t^4)}{(1+p^{-1/2}\xi_0Y)(1 -p^{-(n+3)/2} Yt^2)\phi_{(n-2r-2)/2}(p^{-2})} \widetilde Q_{2r+1}^{(1)}(n;d_0,\omega,1, X,Y,t).
\end{align*}
where $m_0=1$ or $0$ according as $\xi_0=0$ or not.
\item[{\rm (2)}] Let $\nu(d_0) > 0.$  Then  
\begin{align*}
& \widetilde R_{n}(d_0,\varepsilon,X,Y,t)=\sum_{r=0}^{n/2} \frac{\prod_{i=1}^{r} (1-p^{2i-1}Y^2) \prod_{i=1}^{(n-2r-2)/2}(1-p^{-2i-n-1}Y^2t^4)}{(1-p^{(-n-3)/2}tY^2)\phi_{(n-2r)/2}(p^{-2})} \\
& \phantom{xxxxxxxxxxxxx}\times \sum_{d \in {\mathcal U}(n,n-2r,d_0)} D_{2r}(d_0,d,Y,t)\widetilde Q_{2r}^{(0)}(n;d_0,\varepsilon,1, X,Y,t).
\end{align*}
\end{enumerate}
\end{thms}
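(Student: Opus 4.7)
The strategy is to partition the index set $\mathcal{L}_{n,p}^{(0)}(d_0)/GL_n(\ZZ_p)$ in the definition of $\widetilde R_n(d_0,\omega,X,Y,t)$ according to the Jordan block structure of each representative. Up to $GL_n(\ZZ_p)$-equivalence every $B' \in \mathcal{L}_{n,p}^{(0)}(d_0)$ has the form $\Theta \bot pB_1$, where $\Theta$ is an invertible symmetric matrix of size $n - n_1$ (in $S_{n-n_1}(\ZZ_p)$ when $p \neq 2$, in $S_{n-n_1}(\ZZ_2)_e$ when $p = 2$) and $B_1$ is of size $n_1$. The case $n_1 = 2r$ will produce the $\widetilde Q_{2r}^{(0)}$ contributions, while $n_1 = 2r + 1$ produces the $\widetilde Q_{2r+1}^{(1)}$ contributions.

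For each such decomposition, Lemma~\ref{lem.primitive-Siegel-series} supplies the closed form of $G_p^{(0)}(B', p^{-(n+1)/2}Y)$ and Lemma~\ref{lem.Andrianov-B} that of $B_p^{(0)}(B', p^{-(n+3)/2}Yt^2)$. Both depend only on $n_1$, on the quadratic character $\chi((-1)^{(n-n_1)/2}\det \Theta)$ in the even case, and, when $p=2$, on the parity type of $B_1$. Their product, together with the monomial $(tY^{-1/2})^{\nu(\det B')}$ in the definition of $\widetilde R_n$, assembles into exactly the prefactor $D_{2r}(d, Y, t)$ of the theorem plus the telescoping products $\prod_i (1 - p^{2i-1}Y^2)$ and $\prod_i (1 - p^{-2i-n-1}Y^2 t^4)$. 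Meanwhile $\alpha_p(B')^{-1}$ decomposes by the multiplicativity of local densities for orthogonal block sums with $\Theta$ unimodular; the explicit formula for $\alpha_p(\Theta)$ supplies the denominator $\phi_{(n-2r)/2}(p^{-2})$ (respectively $\phi_{(n-2r-2)/2}(p^{-2})$ in the odd case) and the remaining part of $\alpha_p(B')^{-1}$ is absorbed into the inner sum.

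The outer sum $\sum_{d \in \mathcal{U}(n, n-2r, d_0)}$ appears because $\Theta$ is determined up to equivalence by its determinant class $d = \det \Theta \bmod (\ZZ_p^\times)^2 \in \mathcal{U}$, and the constraint $\det(\Theta \bot pB_1) \equiv d_0 \bmod (\ZZ_p^\times)^2$ pins down $\det B_1$ to lie in the class $d_0 d$. Once this is done, the surviving sum over $B_1$ is by construction equal to $\widetilde Q_{2r}^{(0)}(n; d_0 d, \omega, \chi(d), X, Y, t)$ in the even case, or $\widetilde Q_{2r+1}^{(1)}(n; d_0, \omega, 1, X, Y, t)$ in the odd case, once one identifies $pB_1$ (respectively its $(1)$-type extension) with the matrices parametrising the $Q$-series under the normalisations $pB_1 \leftrightarrow 2B'$ or $pB_1 \leftrightarrow 4B'$ at $p=2$. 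The correction $2^{\delta_{0,r}-1}$ accounts for the fact that the pair $(d, d_0 d)$ and $(d_0 d, d)$ yield the same orbit whenever both lie in $\mathcal{U}$, with the degeneracy collapsing only in the case $r = 0$. Part~(2) then follows by observing that when $\omega = \varepsilon$ and $\nu(d_0) > 0$, the identity $\varepsilon(\Theta \bot pB_1) = \varepsilon(\Theta)\varepsilon(pB_1) \cdot (\det \Theta,\det(pB_1))_p$ causes the $n_1$-odd contributions to cancel after summation over $d$, and the $n_1$-even summand survives without the combinatorial factor $2^{\delta_{0,r}-1}$.

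The principal technical obstacle is the $p = 2$ case, where one must track separately the three types (B.I)--(B.III) for odd $n_1$ and the two types (A.I)--(A.II) for even $n_1$, and verify that each contributes to the expected $Q$-series with the correct Hasse-symbol sign. The normalisation constants $\kappa(d_0, r, l)$ and the auxiliary factor $t^{(-r+j+1)\delta_{2,p}j}$ built into the definition of $P_{r-j}^{(j)}$ are designed precisely to make these identifications uniform in $p$, so the careful combinatorial bookkeeping of types, combined with the multiplicativity of the Hasse invariant on orthogonal sums, furnishes the equalities stated in (1) and (2).
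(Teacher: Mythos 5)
Your proposal follows essentially the same route as the paper: partition $\mathcal{L}_{n,p}^{(0)}(d_0)$ by Jordan splittings $\Theta_{n-n_1,d}\bot pB_1$ (with the separate $p=2$ bookkeeping), evaluate the product $G_p^{(0)}(B',p^{-(n+1)/2}Y)\,B_p^{(0)}(B',p^{-(n+3)/2}Yt^2)$ via Lemmas \ref{lem.primitive-Siegel-series} and \ref{lem.Andrianov-B} to produce $D_{2r}(d,Y,t)$ and the telescoping products, and use multiplicativity of the local densities to peel off $\phi_{(n-2r)/2}(p^{-2})$ and identify the residual block sums with the twisted $\widetilde Q$-series. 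The only step you assert "by construction" that actually requires a separate identity is the reduction of $\widetilde G_p^{(0)}(\Theta_{n-2r,d}\bot pB_1,1,X,u)$ to the $\chi(d)$-twisted polynomial of the block $pB_1$ alone, which the paper likewise delegates to a citation (\cite[Lemma 5.8]{KK15}); your sketch correctly anticipates that the twist $\chi(d)$ must appear in the third argument of $\widetilde Q_{2r}^{(0)}$, so the structure matches.
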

\begin{proof} Let $p\not=2.$ Let $B$ be a symmetric matrix of degree $2r$ or $2r+1$ with entries in $\ZZ_p.$ Then we note that $\Theta_{n-2r,d} \bot pB$ belongs to ${\mathcal L}_{n,p}(d_0)$ if and only if $B \in S_{2r,p}(d_0d) \cap S_{2r,p}$, and that  $\Theta_{n-2r-1,d} \bot pB$ belongs to ${\mathcal L}_{n,p}(d_0)$ if and only if $B \in S_{2r+1,p}(p^{-1}d_0d) \cap S_{2r+1,p}.$ Thus by the theory of Jordan decompositions, for $\omega=\varepsilon^l$ we have
 \begin{align*}
& \widetilde R_{n}(d_0,\omega,X,Y,t)=\kappa(d_0,n,l)^{-1} Y^{\nu(d_0)/2}\\
& \times \left\{ \sum_{r=0}^{n/2} \sum_{d \in {\mathcal U}(n,n-2r,d_0)} \sum_{B' \in S_{2r,p}(d_0d)} 
   \frac{G_p^{(0)}(\Theta_{n-2r,d} \bot pB',p^{-(n+1)/2}Y)}{\alpha_p(\Theta_{n-2r,d} \bot pB') } \right. \\
& \times B_p^{(0)}(\Theta_{n-2r,d} \bot pB',p^{-n/2-3/2}Yt^2) \widetilde G_p^{(0)}(\Theta_{n-2r,d} \bot pB',1,X,p^{-n-1}t^2Y) \omega(\Theta_{n-2r,d} \bot pB') (tY^{-1/2})^{\nu(\det (pB'))} \\
& +\sum_{r=0}^{(n-2)/2} \sum_{d \in {\mathcal U}(n,n-2r-1,d_0)} \sum_{B' \in S_{2r+1,p}(p^{-1}d_0d)}
  \frac{G_p^{(0)}(\Theta_{n-2r-1,d} \bot pB',p^{-(n+1)/2}Y)}{\alpha_p(\Theta_{n-2r-1,d} \bot pB') } \\
& \times  B_p^{(0)}(\Theta_{n-2r-1,d} \bot pB',p^{-n/2-3/2}Yt^2) \widetilde G_p^{(0)}(\Theta_{n-2r-1,d} \bot pB',1,X,p^{-n-1}t^2Y) \\
& \times \left. \omega(\Theta_{n-2r-1,d} \bot pB')(tY^{-1/2})^{\nu(\det (pB'))} \right\}.
\end{align*}
By Lemmas \ref{lem.primitive-Siegel-series} and \ref{lem.Andrianov-B} we have
  \begin{align*}
&G_p^{(0)}(\Theta_{n-2r,d} \bot pB',p^{-(n+1)/2}Y)B_p^{(0)}(\Theta_{n-2r,d} \bot pB',p^{-n/2-3/2}Yt^2)\\
&=((1+\xi_0 p^{-1/2}Y)(1-p^{-(n+3)/2}t^2Y))^{-1} \prod_{i=m_0}^{r-1} (1-p^{2i-1}Y^2) \\\
& \times \prod_{i=1}^{(n-2r)/2}(1-p^{-2i-n-1}Y^2t^4)(1+p^{r-1/2}\chi(d) Y)(1-p^{-n-3/2+r}\chi(d)Yt^2),
\end{align*}
  and 
 \begin{align*}
&G_p^{(0)}(\Theta_{n-2r-1,d} \bot pB',p^{-(n+1)/2}Y)B_p^{(0)}(\Theta_{n-2r-1,d} \bot pB',p^{-n/2-3/2}Yt^2)\\
 &=(1-\xi_0 p^{-1/2}Y)(1-p^{-(n+3)/2}t^2Y)^{-1} \prod_{i=m_0}^{r} (1-p^{2i-1}Y^2) \prod_{i=1}^{(n-2r)/2}(1-p^{-2i-n-1}Y^2t^4).
\end{align*}
   Thus the assertion follows from \cite[Lemma 5.8]{KK15}, \cite[Lemma 4.3.2]{KK14}, and \cite[Propositions 4.3.3 and 4.3.4]{KK14}.

Let $p=2$. Then, similarly to above we have
 \begin{align*}
& \widetilde R_{n}(d_0,\omega,X,Y,t)=\kappa(d_0,n,l)^{-1} Y^{\nu(d_0)/2}\\
& \times \Bigl\{ \sum_{r=0}^{n/2} \sum_{d \in {\mathcal U}(n,n-2r,d_0)} \sum_{B' \in S_{2r,2}(d_0d) \cap S_{2r,2,e}} 
   \frac{G_2^{(0)}(\Theta_{n-2r,d} \bot 2B',p^{-(n+1)/2}Y)}{ \alpha_2(\Theta_{n-2r,d} \bot 2B') } \\
& \times B_2^{(0)}(\Theta_{n-2r,d} \bot 2B',2^{-n/2-3/2}Yt^2) \widetilde G_2^{(0)}(\Theta_{n-2r,d} \bot 2B',1,X,p^{-n-1}t^2Y) \omega(\Theta_{n-2r,d} \bot 2B') (tY^{-1/2})^{\nu(\det (2B'))} \\
& +\sum_{r=0}^{(n-2)/2}  \sum_{B' \in S_{2r+2,p}(d_0) \cap S_{2r+2,2,o}}
  \frac{G_2^{(0)}(\Theta_{n-2r-2} \bot 2B',2^{-(n+1)/2}Y)}{ \alpha_2(\Theta_{n-2r-2} \bot 2B') } \\
& \times  B_2^{(0)}(\Theta_{n-2r-2} \bot pB',2^{-n/2-3/2}Yt^2) \widetilde G_2^{(0)}(\Theta_{n-2r-2} \bot 2B',1,X,2^{-n-1}t^2Y) \\
& \times  \omega(\Theta_{n-2r-2,d} \bot 2B')(tY^{-1/2})^{\nu(\det (2B'))} \Bigr \}.
\end{align*}
Here we make the convention that we have $\Theta_{n-2r-2,d} \bot 2B'=2B'$ if $r=(n-2)/2$.
Then the assertion can be proved similarly to above by using Lemmas \ref{lem.primitive-Siegel-series} and \ref{lem.Andrianov-B}, \cite[Lemma 5.8]{KK15}, \cite[Lemma 4.3.2]{KK14}, and \cite[Propositions 4.3.3 and 4.3.4]{KK14}.
 \end{proof}
  
 
Now to rewrite the above theorem, first we express $\widetilde P_{m-1}^{(0)}(n+1;d_0,\omega, \eta, X,Y,t)$ in terms of  $Q_{2r+1}^{(1)}(n+1;d_0, \omega, \eta , X,Y,t)$ and $Q_{2r}^{(0)}(n+1;d_0 d,\omega,\eta, X,Y,t).$ First we recall the following result 
 (cf. \cite{KK15}, Corollary 5.12).

\begin{props}
\label{prop.express-Q-as-P}
Let $r$ be a non-negative integer. Let $d_0$ be an element of ${\mathcal F}_{p}$ and $\xi=\pm 1.$ Then for any non-negative integer $a$, the following assertions hold.
 \begin{itemize}
\item[(1)] Let $l=0$ or $\nu(d_0)=0.$ Then  
 $$Q_{2r}^{(0)}(a;d_0,\varepsilon^l,\xi, X,Y,t)=\sum_{m=0}^r \sum_{d \in {\mathcal U}(2r,2m,d_0)} \frac{(-1)^m (\chi(d) +p^{-m})p^{-m^2}}{2^{1-\delta_{0,r-m}+\delta_{0,r}}\phi_m(p^{-2}) }\widetilde P_{2r-2m}^{(0)}(a;d_0 d , \varepsilon^l,\xi \chi(d), X,Y,t)$$
$$+ \sum_{m=0}^{r-1}  \frac{(-1)^{m+1} p^{-m-m^2}}{\phi_m(p^{-2}) }\widetilde P_{2r-2m-1}^{(1)}(a;d_0 , \varepsilon^l,\xi , X,Y,t)),$$
\begin{eqnarray*}
&& Q_{2r+1}^{(1)}(a;d_0,\varepsilon^l,\xi,X,Y,t)=\sum_{m=0}^r  \frac{(-1)^m p^{-m-m^2}}{\phi_m(p^{-2}) }\widetilde P_{2r+1-2m}^{(1)}(a;d_0,\varepsilon^l,\xi,X,Y,t) \\
&& \phantom{xxxxxxxxxxxxxxx} + \sum_{m=0}^{r} \sum_{d \in {\mathcal U}(2r+1,2m+1,d_0)} \frac{(-1)^{m+1} p^{-m-m^2}}{ 2^{1-\delta_{0,r-m}}\phi_m(p^{-2}) }\widetilde P_{2r-2m}^{(0)}(a;d_0 d ,\varepsilon^l,\xi \chi(d),X,Y,t)).
\end{eqnarray*}
\item[(2)] Let $\nu(d_0) >0.$ We have 
 $$Q_{2r+1}^{(1)}(n;d_0,\varepsilon,\xi,X,Y,t)=\sum_{m=0}^r  \frac{(-1)^m p^{m-m^2}}{\phi_m(p^{-2}) }\widetilde P_{2r+1-2m}^{(1)}(n;d_0,\varepsilon,\xi,X,Y,t),$$
and
 $$Q_{2r}^{(0)}(n;d_0,\varepsilon,\xi,X,Y,t)=0.$$
\end{itemize}
\end{props}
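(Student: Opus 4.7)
The plan is to establish the proposition by reducing to the analogous Corollary 5.12 of \cite{KK15}, which handles the same inversion in one lower dimension. Both the $Q$-series and the $\widetilde{P}$-series sum local Siegel data weighted by $1/\alpha_p$ over symmetric (or half-integral) matrices of a fixed discriminant class; the difference is that $Q$ ranges over matrices of the form $pB'$ with $B'$ integral (resp.\ in a specified subclass at $p=2$), while $\widetilde{P}$ ranges over all of $\mathcal{L}_{r,p}^{(j)}(d_0)$. The mechanism connecting them is Jordan decomposition at $p$, and the inversion is a $p$-adic Möbius identity.

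First, I would exploit the Jordan decomposition: every class $B\in\mathcal{L}_{r,p}^{(j)}(d_0)/GL_r(\ZZ_p)$ can be written as $B\sim\Theta_{r-s,d}\bot pB''$, where $\Theta_{r-s,d}$ is a unit block of some size $r-s$ (with $d\in\mathcal{U}(r,s,d_0)$) and $B''$ is a smaller half-integral matrix. Using the factorization of the Siegel polynomials under orthogonal sums (the analogue of Lemma~\ref{lem.primitive-Siegel-series} and Lemma~\ref{lem.Andrianov-B}), together with the product formula for $\alpha_p(\Theta\bot pB'')$ in terms of $\alpha_p(\Theta)$ and $\alpha_p(pB'')$, each summand in $\widetilde{P}_r^{(j)}$ factors as an explicit scalar (depending only on $r-s$, $d$, and $\chi(d)$) times a summand from a lower-rank $Q$-series. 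Summing over $B''$ therefore yields a forward expansion
\[
\widetilde{P}_{2r-j}^{(j)}(a;d_0,\omega,\xi,X,Y,t)=\sum_{m\geq 0}\sum_{d}c_{m,d}^{(j)}\,Q_{2r-j-2m}^{(j)}(a;d_0d,\omega,\xi\chi(d),X,Y,t),
\]
with explicitly known coefficients $c_{m,d}^{(j)}\in\QQ[p^{\pm 1/2}]$.

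Second, I would invert this triangular system. The coefficients $c_{m,d}^{(j)}$ involve only $p^{-m^2}$, $p^{-m}$, $\phi_m(p^{-2})^{-1}$, and $(\chi(d)+p^{-m})$, which are precisely the building blocks appearing in the classical $p^{-2}$-binomial inversion identity
\[
\sum_{m=0}^{k}\frac{(-1)^m p^{-m(m-1)}}{\phi_m(p^{-2})\,\phi_{k-m}(p^{-2})}=\delta_{k,0}.
\]
Applying this identity (together with the orthogonality between the $\chi(d)$ characters summed over $d\in\mathcal{U}(2r,2m,d_0)$) inverts the forward expansion and produces the claimed formulas, with the sign pattern $(-1)^m$ and the factors $p^{-m-m^2}$, $p^{-m^2}$, $(\chi(d)+p^{-m})$ arising naturally from the inversion.

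The main obstacle will be bookkeeping at $p=2$: the Jordan decomposition there splits into even and odd pieces $S_{r,2;e}$ and $S_{r,2;o}$, which is exactly the reason for the separate summands $Q_{2r}^{(0)}$ (involving $S_{2r,2;e}$) and $Q_{2r+1}^{(1)}$ (involving $4B'$ with $B'$ integral). Tracking $\kappa(d_0,r,l)$ factors and the Hasse invariant $\varepsilon(\Theta\bot pB'')=\varepsilon(\Theta)\varepsilon(pB'')(\det\Theta,\det pB'')_p$ through this decomposition is what makes the analogous argument in \cite[Corollary 5.12]{KK15} lengthy. Finally, case (2) follows cleanly from (1): when $\nu(d_0)>0$, the Hasse invariant $\varepsilon$ forces the $\Theta_{2r-2m}$-blocks of odd $d$-type to cancel in pairs, leaving only the half-integral $j=1$ contribution, so $Q_{2r}^{(0)}(n;d_0,\varepsilon,\xi,X,Y,t)$ vanishes and the sum for $Q_{2r+1}^{(1)}$ collapses to the stated single series.
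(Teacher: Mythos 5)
The paper does not actually prove this proposition: it is quoted as Corollary 5.12 of \cite{KK15} (the sentence immediately preceding it reads ``First we recall the following result''), so there is no in-paper argument to measure your proposal against. Judged on its own terms, your plan --- Jordan splitting $B \sim \Theta_{r-s,d}\bot pB''$ to obtain a forward expansion of the $\widetilde P$-series in terms of lower-rank $Q$-series, followed by inversion via the $q$-binomial identity with $q=p^{-2}$ --- has the right general shape, and the inversion identity you quote is the correct instance of $\sum_{m=0}^{k}(-1)^m q^{\binom{m}{2}}\binom{k}{m}_q=\delta_{k,0}$.

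Two concrete points, however, keep this from being a proof. First, your forward expansion $\widetilde P^{(j)}_{2r-j}=\sum_{m,d}c^{(j)}_{m,d}\,Q^{(j)}_{2r-j-2m}$ keeps the superscript $j$ fixed, but the identity to be proved visibly couples the two towers: $Q^{(0)}_{2r}$ is expressed through both $\widetilde P^{(0)}_{2r-2m}$ and $\widetilde P^{(1)}_{2r-2m-1}$, and likewise $Q^{(1)}_{2r+1}$ involves both $\widetilde P^{(1)}$ and $\widetilde P^{(0)}$ terms. This is forced by the structure of the Jordan decomposition (the unimodular block can have either parity of rank, and at $p=2$ the residual block splits into even and odd type), so the expansion of a single $\widetilde P^{(j)}$ already contains $Q$-series of both types; the system to be inverted is block-triangular in the pair $(Q^{(0)},Q^{(1)})$, not two independent scalar triangular systems. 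Your sketch does not set up this coupled inversion, and the specific coefficients $(-1)^m(\chi(d)+p^{-m})p^{-m^2}$ versus $(-1)^{m+1}p^{-m-m^2}$ are exactly the data that the coupled inversion must produce; none of them is computed. Second, the assertion that ``case (2) follows cleanly from (1)'' cannot be right as stated: part (1) is asserted only under the hypothesis $l=0$ or $\nu(d_0)=0$, which is precisely the complement of the situation of part (2) (namely $\omega=\varepsilon$ and $\nu(d_0)>0$), so (2) requires a separate argument --- the vanishing of the $\varepsilon$-weighted sums over the unit classes $d$ when $\nu(d_0)$ is odd --- which you gesture at but do not establish.
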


The following lemma is well known (cf. \cite[Lemma 5.13]{KK15}).


\begin{lems}
\label{lem.q-identity}
Let $l$ be a positive integer, and $q,U$ and $Q$ variables. Then 
\begin{eqnarray*}
\prod_{i=1}^l (1-U^{-1}Qq^{-i+1})U^l=\sum_{m=0}^l \frac{\phi_l(q^{-1})}{\phi_{l-m}(q^{-1}) \phi_m(q^{-1})}\prod_{i=1}^{l-m} (1-Qq^{-i+1})\prod_{i=1}^m (1-Uq^{i-1}) (-1)^mq^{(m-m^2)/2}.
\end{eqnarray*}
\end{lems}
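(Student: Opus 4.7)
The identity is a classical $q$-binomial type relation; the plan is to establish it by induction on $l$. The base case $l=1$ is a direct check: both sides simplify to $U-Q$ (the RHS being $(1-Q)-(1-U)$).

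For the inductive step, I would write
\[U^{l+1}\prod_{i=1}^{l+1}(1-U^{-1}Qq^{-i+1}) \;=\; (U-Qq^{-l})\cdot U^{l}\prod_{i=1}^{l}(1-U^{-1}Qq^{-i+1})\]
and apply the induction hypothesis to the second factor. The crux is the elementary algebraic identity
\[U-Qq^{-l} \;=\; q^{-m}\bigl(1-Qq^{m-l}\bigr) \;-\; q^{-m}\bigl(1-Uq^{m}\bigr),\]
valid for every integer $m$. Since $(1-Qq^{m-l})\prod_{i=1}^{l-m}(1-Qq^{-i+1}) = \prod_{i=1}^{l+1-m}(1-Qq^{-i+1})$ and $(1-Uq^{m})\prod_{i=1}^{m}(1-Uq^{i-1}) = \prod_{i=1}^{m+1}(1-Uq^{i-1})$, this furnishes, for each $m$, a clean expansion
\begin{align*}
&(U-Qq^{-l})\prod_{i=1}^{l-m}(1-Qq^{-i+1})\prod_{i=1}^{m}(1-Uq^{i-1}) \\
&\quad = q^{-m}\prod_{i=1}^{l+1-m}(1-Qq^{-i+1})\prod_{i=1}^{m}(1-Uq^{i-1}) \;-\; q^{-m}\prod_{i=1}^{l-m}(1-Qq^{-i+1})\prod_{i=1}^{m+1}(1-Uq^{i-1}).
\end{align*}

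Substituting this into the inductive hypothesis and reindexing, the coefficient of $\prod_{i=1}^{l+1-m'}(1-Qq^{-i+1})\prod_{i=1}^{m'}(1-Uq^{i-1})$ in the transformed sum picks up contributions from $m=m'$ (first piece) and $m=m'-1$ (second piece). Invoking the $q$-Pascal recurrence
\[\frac{\phi_{l+1}(q^{-1})}{\phi_{l+1-m'}(q^{-1})\phi_{m'}(q^{-1})} \;=\; q^{-m'}\frac{\phi_{l}(q^{-1})}{\phi_{l-m'}(q^{-1})\phi_{m'}(q^{-1})} \;+\; \frac{\phi_{l}(q^{-1})}{\phi_{l+1-m'}(q^{-1})\phi_{m'-1}(q^{-1})},\]
a short bookkeeping calculation (using the cancellations $(m'-(m')^2)/2 - m' = -m'(m'+1)/2$ and $((m'-1)-(m'-1)^2)/2-(m'-1)=-m'(m'-1)/2$) verifies that this combined coefficient equals $(-1)^{m'}q^{(m'-(m')^2)/2}\tfrac{\phi_{l+1}(q^{-1})}{\phi_{l+1-m'}(q^{-1})\phi_{m'}(q^{-1})}$, i.e.\ precisely the coefficient predicted by the identity at level $l+1$. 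The boundary cases $m'=0$ and $m'=l+1$ are handled separately but fall out immediately from the same computation. This closes the induction.

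The main obstacle is not conceptual but a matter of careful bookkeeping: one must track the $q$-power shifts and signs so that the $q$-Pascal recurrence applies in precisely the right form. Once the two-term linear decomposition of $U-Qq^{-l}$ displayed above is identified, the remainder of the argument is a mechanical matching of coefficients.
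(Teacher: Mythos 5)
Your proof is correct. Note first that the paper itself gives no argument for this lemma: it is simply cited as ``well known'' from [KK15, Lemma 5.13], so you are supplying a complete elementary proof where the paper offers only a reference. I checked the details of your induction and they hold up. The base case gives $(1-Q)-(1-U)=U-Q$ as you say. The two-term decomposition $U-Qq^{-l}=q^{-m}(1-Qq^{m-l})-q^{-m}(1-Uq^{m})$ is an identity for every $m$, and the absorptions $(1-Qq^{m-l})\prod_{i=1}^{l-m}(1-Qq^{-i+1})=\prod_{i=1}^{l+1-m}(1-Qq^{-i+1})$ and $(1-Uq^{m})\prod_{i=1}^{m}(1-Uq^{i-1})=\prod_{i=1}^{m+1}(1-Uq^{i-1})$ are exact. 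Writing $c^{(l)}_m=\frac{\phi_l(q^{-1})}{\phi_{l-m}(q^{-1})\phi_m(q^{-1})}(-1)^m q^{(m-m^2)/2}$, the coefficient matching reduces, after dividing out $(-1)^{m'}q^{-m'(m'-1)/2}$ via exactly the two exponent computations you record, to
\[
\frac{\phi_{l+1}(q^{-1})}{\phi_{l+1-m'}(q^{-1})\phi_{m'}(q^{-1})}
= q^{-m'}\frac{\phi_{l}(q^{-1})}{\phi_{l-m'}(q^{-1})\phi_{m'}(q^{-1})}
+ \frac{\phi_{l}(q^{-1})}{\phi_{l+1-m'}(q^{-1})\phi_{m'-1}(q^{-1})},
\]
which is the standard Pascal recurrence for the Gaussian binomial in the variable $q^{-1}$; the boundary terms $m'=0$ and $m'=l+1$ come out as $1$ and $(-1)^{l+1}q^{-l(l+1)/2}$ respectively, matching $c^{(l+1)}_0$ and $c^{(l+1)}_{l+1}$. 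The identity is in substance the terminating $q$-binomial (Gauss/Cauchy) theorem applied to $\prod_{i=1}^{l}(U-Qq^{-i+1})$, so an alternative route would be to quote that theorem directly; your induction is a self-contained and correct substitute.
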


\begin{thms}  
\label{explicit-R}
  Let the notation be as in Theorem \ref{th.formula-for-R}. \\
\begin{itemize}
\item[(1)] Suppose  that $\nu(d_0)=0$ and 
put $\xi_0=\chi(d_0).$ Then   
\begin{align*}
& \widetilde R_{n}(d_0,\omega,X,Y,t)=(1-p^{-n-1}t^2) \times \Big\{\sum_{l=0}^{n/2}\sum_{d \in {\mathcal U}(n,n-2l,d_0)}\frac{ \widetilde P_{2l}^{(0)}(n;d_0d,\omega,\chi(d),X,Y,t) T_{2l}(d_0,d,Y,t)}{ 2^{1-\delta_{0,l}}} \\
& \times  \frac{\prod_{i=1}^{(n-2-2l)/2}(1-p^{-2l-n-2i-2}t^4) (p^{2l-1}Y)^{n/2-l}\prod_{i=0}^{l-1}(1-p^{2i-1}Y^2)}{ (1+p^{-1/2}\xi_0Y)\phi_{n/2-l}(p^{-2})} \\
& -\sum_{l=0}^{(n-2)/2}  \widetilde P_{2l+1}^{(1)}(n;d_0,\omega,1,X,Y,t) \\
& \times \frac{\prod_{i=1}^{(n-2-2l)/2}(1-p^{-2l-n-2i-2}t^4)(p^{2l+1}Y)^{n/2-l}p^{-n/2+1/2}\prod_{i=0}^l (1-p^{2i-1}Y^2)}{ (1+p^{-1/2}\xi_0Y)\phi_{n/2-l-1}(p^{-2})} \Big\},
\end{align*}
where
$$T_{2l}(d,Y,t)=(1+p^{-n/2+l}\chi(d)) (1+p^{-n/2-l-1}t^2\chi(d))(1+p^{l-1/2}\chi(d)Y).
$$ 
\item[(2)] Suppose that $\nu(d_0) >0$.
\begin{itemize}
\item[(2.1)] Suppose that $\omega=\iota$. Then
\begin{align*}
& \widetilde R_{n}(d_0,\omega,X,Y,t)=(1-p^{-n-1}t^2) \times \Big\{\sum_{l=1}^{n/2}\sum_{d \in {\mathcal U}(n,n-2l,d_0)}\frac{ \widetilde P_{2l}^{(0)}(n;d_0d,\omega,\chi(d),X,Y,t) T_{2l}(d_0,d,Y,t)} { 2} \\
& \times  \frac{\prod_{i=1}^{(n-2-2l)/2}(1-p^{-2l-n-2i-2}t^4) (p^{2l-1}Y)^{n/2-l}\prod_{i=1}^{l-1}(1-p^{2i-1}Y^2)}{\phi_{n/2-l}(p^{-2})} \\
&-\sum_{l=0}^{(n-2)/2}  \widetilde P_{2l+1}^{(1)}(n;d_0,\omega,1,X,Y,t) \\
& \times \frac{\prod_{i=1}^{(n-2-2l)/2}(1-p^{-2l-n-2i-2}t^4)(p^{2l+1}Y)^{n/2-l}p^{-n/2+1/2}\prod_{i=1}^l (1-p^{2i-1}Y^2)}{ \phi_{n/2-l-1}(p^{-2})} \Big\}.
\end{align*} 
\item[(2.2)] Suppose that  $\omega=\varepsilon.$ Then 
$\widetilde R_{n}(d_0,\omega,X,Y,t)=0.$
\end{itemize}
\end{itemize}
\end{thms}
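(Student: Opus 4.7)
The goal is to convert the expression for $\widetilde R_n(d_0,\omega,X,Y,t)$ given in Theorem \ref{th.formula-for-R} (where the building blocks are $\widetilde Q_{2r}^{(0)}$ and $\widetilde Q_{2r+1}^{(1)}$) into the asserted expression whose building blocks are $\widetilde P_{2l}^{(0)}$ and $\widetilde P_{2l+1}^{(1)}$. The engine for this conversion is Proposition \ref{prop.express-Q-as-P}, which expresses each $\widetilde Q$ as a linear combination of $\widetilde P$'s, together with the $q$-identity in Lemma \ref{lem.q-identity}, which is tailor-made to collapse the alternating sums that will appear.

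The plan is as follows. First, I would substitute the formulas of Proposition \ref{prop.express-Q-as-P} for $\widetilde Q_{2r}^{(0)}(n;d_0d,\omega,\chi(d),X,Y,t)$ and $\widetilde Q_{2r+1}^{(1)}(n;d_0,\omega,1,X,Y,t)$ into the right-hand side of Theorem \ref{th.formula-for-R}. After this substitution, I would swap the order of summation so that the outer index is the parameter of $\widetilde P$ (namely $2l$ or $2l+1$) and the inner index is the original $r$. For a fixed $\widetilde P_{2l}^{(0)}(n;d_0d,\omega,\chi(d),X,Y,t)$, its coefficient is a finite alternating sum over $r\geq l$ of products of $(1-p^{2i-1}Y^2)$ factors, $(1-p^{-2i-n-1}Y^2t^4)$ factors, and $\phi$-denominators; the analogous statement holds for $\widetilde P_{2l+1}^{(1)}$.

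At this point, the key calculation is to feed each of these coefficient sums into Lemma \ref{lem.q-identity}, with appropriate choices of $q,U,Q$ (essentially $q=p^{-2}$, and $U,Q$ built from $Y$, $t^2$ and powers of $p$), so that the sum telescopes to a single product. The idea is the same as the $H_{n-1,p}$ computation carried out in \cite{KK15}, and the bookkeeping splits naturally into the three cases $(\nu(d_0)=0)$, $(\nu(d_0)>0,\ \omega=\iota)$, and $(\nu(d_0)>0,\ \omega=\varepsilon)$ stated in the theorem. In the last case, one uses the vanishing $\widetilde Q_{2r}^{(0)}(n;d_0,\varepsilon,\xi,X,Y,t)=0$ from Proposition \ref{prop.express-Q-as-P}(2) together with the observation that the contributions from $\widetilde Q_{2r+1}^{(1)}$ cancel against each other, yielding $\widetilde R_n(d_0,\varepsilon,X,Y,t)=0$.

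The principal obstacle will be the combinatorial bookkeeping: carefully tracking the normalizing constants $\kappa(d_0,n,l)^{-1}$, the factors $2^{1-\delta_{0,r-m}+\delta_{0,r}}$ (which merge the multiplicity-two boundary cases $d\in\{1,d_0\}$ correctly), and the inner summation over $d\in{\mathcal U}(n,n-2r,d_0)$ versus the outer summation over $d\in{\mathcal U}(n,n-2l,d_0)$. In particular, one must verify that after applying Lemma \ref{lem.q-identity} the factor $T_{2l}(d,Y,t)=(1+p^{-n/2+l}\chi(d))(1+p^{-n/2-l-1}t^2\chi(d))(1+p^{l-1/2}\chi(d)Y)$ emerges cleanly from the combination of the $D_{2r}(d,Y,t)$ factors of Theorem \ref{th.formula-for-R} with the coefficients $(-1)^m(\chi(d)+p^{-m})p^{-m^2}$ of Proposition \ref{prop.express-Q-as-P}. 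Once these algebraic simplifications are verified term by term, the theorem follows.
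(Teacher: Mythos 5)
Your proposal follows essentially the same route as the paper's proof: substitute Proposition \ref{prop.express-Q-as-P} into Theorem \ref{th.formula-for-R}, interchange the order of summation so that the outer index is that of the $\widetilde P$-series, and collapse the resulting alternating inner sums with Lemma \ref{lem.q-identity}, with the factor $T_{2l}$ emerging from the combination of the $D_{2r}$ factors with the coefficients $(-1)^m(\chi(d)+p^{-m})p^{-m^2}$ exactly as you anticipate. One small correction: in case (2.2) no cancellation among the $\widetilde Q^{(1)}$ contributions is needed, since the formula of Theorem \ref{th.formula-for-R}(2) for $\nu(d_0)>0$ and $\omega=\varepsilon$ involves only the $\widetilde Q_{2r}^{(0)}$ terms, all of which vanish by Proposition \ref{prop.express-Q-as-P}(2).
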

\begin{proof} (1) By Theorem \ref{th.formula-for-R} and Proposition \ref{prop.express-Q-as-P}, we have 
\begin{align*}
& \widetilde R_{n}(d_0,\omega;X,Y,t) =  \sum_{r=0}^{n/2}  \frac{\prod_{i=0}^{r-1} (1-p^{2i-1}Y^2) \prod_{i=1}^{(n-2r)/2}(1-p^{-2i-n-1}Y^2t^4)}{(1+p^{-1/2}\xi_0Y)(1-p^{(-n-3)/2}t^2Y)\phi_{(n-2r)/2}(p^{-2})}\\
& \times \sum_{d_1 \in {\mathcal U}(n,n-2r,d_0)} \frac{D_{2r}(d_1,Y,t)}{2^{1-\delta_{0,r}}} \Bigg\{\sum_{m=0}^r \sum_{d_2 \in {\mathcal U}(2r,2m,d_0d_1)} \frac{(-1)^m (\chi_p(d_2) +p^{-m})p^{-m^2}}{2^{1-\delta_{0,r-m}+\delta_{0,r}}\phi_m(p^{-2}) } \\
& \times \widetilde P_{2r-2m}^{(0)}(n;d_0 d_1 d_2, \omega, \chi(d_1)\chi_p(d_2), X,Y;t) \\
& + \sum_{m=0}^{r-1} \frac{(-1)^{m+1} p^{-m-m^2}}{\phi_m(p^{-2}) }\widetilde P_{2r-2m-1}^{(1)}(n;d_0d_1,\omega, \chi(d_1), X,Y;t))\Bigg\} \\
& + \sum_{r=0}^{(n-2)/2}  \frac{\prod_{i=0}^{r} (1-p^{2i-1}Y^2) \prod_{i=1}^{(n-2r)/2}(1-p^{-2i-n-1}Y^2t^4)}{(1+p^{-1/2}\xi_0Y)(1-p^{(-n-3)/2}t^2Y) \phi_{(n-2r-2)/2}(p^{-2})} \\ 
& \times \{\sum_{m=0}^r  \frac{(-1)^m p^{-m}p^{-m^2}}{ \phi_m(p^{-2}) }\widetilde P_{2r+1-2m}^{(1)}(n;d_0 ,\omega,1, X,Y,t) \\
& + \sum_{m=0}^{r-1} \sum_{d_2 \in {\mathcal U}(2r+1,2m+1,d_0)} \frac{(-1)^{m+1} p^{-m-m^2}}{2^{1-\delta_{0,r-m}}\phi_m(p^{-2}) }\widetilde P_{2r-2m}^{(0)}(n;d_0  d_2, \omega, \chi_p(d_2),X,Y,t)\}.
\end{align*}
We note that by Proposition \ref{prop.local-modified-KM} and Theorem \ref{th.explicit-local-KM}, for any $d_1 \in {\mathcal U}$ we have 
\begin{align*}
\widetilde P_{2r+1-2m}^{(1)}(n;d_0 d_1, \omega,\chi(d_1),X,Y,t)=\widetilde P_{2r+1-2m}^{(1)}(n;d_0, \omega,1,X,Y,t).
\end{align*}
We also note that $\calu(2l+2m+1,2m+1,d_0)=\calu(n.n-2l,d_0)$ for any $0 \le l \le (n-2)/2$ and $0 \le m \le (n-2)/2-l$.
Hence we have
\begin{align*}
& \widetilde R_{n}(d_0,\omega;X,Y,t)= \sum_{l=0}^{n/2}  \sum_{d \in {\mathcal U}(n,n-2l,d_0)} \frac{\widetilde P_{2l}^{(0)}(n;d_0 d,\omega,\chi(d),X,Y,t)}{2^{1-\delta_{0,l}}}\\
& \times \Big\{\sum_{m=0}^{(n-2l)/2} \sum_{d_1 \in {\mathcal U}(n-2l,n-2l-2m,d)} \frac{D_{2l+2m}(d_1,Y,t)}{2}(\chi(d_1)\chi(d) +p^{-m})(-1)^mp^{-m^2} \nonumber \\
& \times \frac{\prod_{i=0}^{l+m-1}(1-p^{2i-1}Y^2) \prod_{i=1}^{(n-2l-2m)/2}(1-p^{-2i-n-1}Y^2t^4)}{(1+p^{-1/2}\xi_0Y)(1-p^{(-n-3)/2}t^2Y) \phi_m(p^{-2})\phi_{(n-2l-2m)/2}(p^{-2})}\nonumber \\
& -\sum_{m=0}^{n/2-l-1} (-1)^mp^{-m-m^2}\frac{\prod_{i=0}^{l+m} (1-p^{2i-1}Y^2) \prod_{i=1}^{(n-2l-2m)/2}(1-p^{-2i-n-1}Y^2t^4)} { (1+p^{-1/2}\xi_0Y)(1-p^{(-n-3)/2}t^2Y) \phi_m(p^{-2})\phi_{(n-2-2l)/2-m}(p^{-2})} \nonumber \Big\} \\
& +\sum_{l=0}^{(n-2)/2}  \widetilde P_{2l+1}^{(1)}(n;d_0, \omega, 1, X,Y,t) \nonumber \\
& \times \Big\{\sum_{m=0}^{(n-2-2l)/2}  (-1)^mp^{-m-m^2} \frac{\prod_{i=0}^{l+m} (1-p^{2i-1}Y^2) \prod_{i=1}^{(n-2l-2m-2)/2}(1-p^{-2i-n-1}Y^2t^4)}{(1+p^{-1/2}\xi_0Y)(1-p^{(-n-3)/2}t^2Y) \phi_m(p^{-2})\phi_{(n-2-2l)/2-m}(p^{-2})} \nonumber\\
& -\sum_{m=0}^{(n-2-2l)/2}   \sum_{d_1 \in {\mathcal U}(n-2l,n-2l-2m-2,d_0)}\frac{D_{2l+2m+2}(d_1,Y,t)}{2}(-1)^m p^{-m-m^2} \nonumber \\
& \times \frac{\prod_{i=0}^{l+m} (1-p^{2i-1}Y^2) \prod_{i=1}^{(n-2l-2m-2)/2}(1-p^{-2i-n-1}Y^2t^4)}{ (1+p^{-1/2}\xi_0Y)(1-p^{(-n-3)/2}t^2Y) \phi_m(p^{-2})\phi_{(n-2-2l)/2-m}(p^{-2})}\Bigr\}.\nonumber\\
\end{align*}
For $d \in {\mathcal U}(n,n-2l,d_0)$, we have
\begin{align*}
& \sum_{d_1 \in {\mathcal U}(n-2l,n-2l-2m,d)}  \frac{D_{2l+2m}(d_1,Y,t)(\chi(d_1)\chi(d)+p^{-m})}{2}-(1-p^{2l+2m-1}Y^2)(1-p^{-n+2m+2l}) p^{-m} \\
& =p^{-n+m+2l}(1-p^{2l+2m-1}Y^2)(1+p^{-n/2-l-1}\chi(d)t^2)\\
& +p^{-n/2+l+m}\chi(d)(1-p^{-n-1}t^2)(1+p^{l-1/2}\chi(d)Y)(1+p^{-1/2+n/2}Y),
\end{align*}
and 
\begin{align*}
& 1-p^{-2n+2l+2m-1}t^4Y^2-\sum_{d_1 \in {\mathcal U}(n-2l,n-2l-2m-2,d_0)}\frac{D_{2l+2m+2}(d_1,Y,t)}{2}\\
&\phantom{xxxxxxxxxxxxxxxxxxxxxx} =-Yp^{-n/2+2m+2l+3/2}(1-p^{-n-1}t^2)(1-p^{(-n-3)/2}t^2Y).
\end{align*}
Hence 
\begin{align*}
& \widetilde R_{n}(d_0,\omega,X,Y,t) = (1-p^{(-n-3)/2}t^2Y)^{-1}\sum_{l=0}^{n/2}\sum_{d \in {\mathcal U}(n,n-2l,d_0)}\frac{\widetilde P_{2l}^{(0)}(n;d_0 d,\omega,\chi(d),X,Y,t) }{2^{1-\delta_{0,l}}}\\
&\phantom{xxxxxxxxxxxxx} \times \{ p^{-n+2l} (1+p^{-n/2-l-1}t^2\chi(d)) \frac{\prod_{i=0}^{l} (1-p^{2i-1}Y^2) }{1+p^{-1/2}\xi_0Y}\\
&\phantom{xxxxx} \times \sum_{m=0}^{n/2-l}\frac{\prod_{i=1}^{m} (-1)^mp^{m-m^2}(1-p^{2l+1}p^{2i-2}Y^2) \prod_{i=1}^{n/2-l-m}(1-p^{-2i-1-n}Y^2t^4)}{ \phi_m(p^{-2})\phi_{n/2-l-m}(p^{-2})} \\
&\phantom{xxxxx} +p^{-n/2+l}\chi(d)(1-p^{-n-1}t^2)(1+p^{l-1/2}\chi(d)Y)(1+p^{n/2-1/2}Y)\frac{\prod_{i=0}^{l-1} (1-p^{2i-1}Y^2)}{1+p^{-1/2}\xi_0Y}\\
&\phantom{xxxxx} \times \sum_{m=0}^{n/2-l} \frac{\prod_{i=1}^{m} (-1)^mp^{m-m^2}(1-p^{2l-1}p^{2i-2}Y^2) \prod_{i=1}^{n/2-l-m}(1-p^{-2i-1-n}Y^2t^4)}{ \phi_m(p^{-2})\phi_{n/2-l-m}(p^{-2})}\} \\
& \phantom{xxxxx}- (1-p^{-n-1}t^2) \sum_{l=0}^{n/2-1}\widetilde P_{2l+1}^{(1)}(n;d_0 ,\omega,1,X,Y,t) p^{-n/2+2l+3/2}Y\frac{\prod_{i=0}^{l} (1-p^{2i-1}Y^2)}{1+p^{-1/2}\xi_0Y}\\
& \phantom{xxxx} \times  \sum_{m=0}^{n/2-l-1} \frac{\prod_{i=1}^{m} (-1)^mp^{m-m^2}(1-p^{2l+1}p^{2i-2}Y^2) \prod_{i=1}^{n/2-l-m-1}(1-p^{-2i-1-n}Y^2t^4)}{\phi_m(p^{-2})\phi_{n/2-l-m-1}(p^{-2})}.
\end{align*}
Then by Lemma \ref{lem.q-identity}, we have
\begin{align*}
	& \widetilde R_{n}(d_0,\omega,X,Y,t) = (1-p^{(-n-3)/2}t^2Y)^{-1}\sum_{l=0}^{n/2}\sum_{d \in {\mathcal U}(n,n-2l,d_0)}\frac{\widetilde P_{2l}^{(0)}(n;d_0 d,\omega,\chi(d),X,Y,t) }{2^{1-\delta_{0,l}}}\\
&\phantom{xxxxx} \times \Bigl\{ p^{-n+2l}  (1+p^{-n/2-l-1}t^2\chi(d))\frac{\prod_{i=0}^{l} (1-p^{2i-1}Y^2)}{(1+p^{-1/2}\xi_0Y)\phi_{n/2-l}(p^{-2})} \\
&\phantom{xxxxx} \times (p^{2l+1}Y)^{n/2-l} \prod_{i=1}^{n/2-l}(1-p^{-2l-n-2i-2}t^4)\\
&\phantom{xxxxx} +p^{-n/2+l}\chi(d)(1-p^{-n-1}t^2)(1+p^{l-1/2}\chi(d)Y)(1+p^{n/2-1/2}Y) \\
&\phantom{xxxxx} \times \frac{\prod_{i=0}^{l-1} (1-p^{2i-1}Y^2)}{(1+p^{-1/2}\xi_0Y)\phi_{n/2-l}(p^{-2})} (p^{2l-1}Y)^{n/2-l} \prod_{i=1}^{n/2-l}(1-p^{-2l-n-2i}t^4)\Bigr\} \\
&\phantom{xxxxx} -(1-p^{-n-1}t^2) \sum_{l=0}^{n/2-1} \widetilde P_{2l+1}^{(1)}(n;d_0 ,\omega,1,X,Y,t)\\
&\phantom{xxxxx} \times \frac {p^{-n/2+2l+3/2}Y}{\phi_{n/2-l-1}(p^{-2})} \frac{\prod_{i=0}^{l} (1-p^{2i-1}Y^2)}{1+p^{-1/2}\xi_0Y}(p^{2l+1}Y)^{n/2-l-1}\prod_{i=1}^{n/2-l-1}(1-p^{-2l-n-2i-2}t^4).
\end{align*}
Thus by a simple computation we prove the assertion.

(2) The assertion (2.1) can be proved in the same way as above remarking that $\chi(d_0)=0$ and $\calu(n,n,d_0)=\emptyset$. The assertion (2.2) follows from (2) of Theorem \ref{th.formula-for-R} and (2) of Proposition \ref{prop.express-Q-as-P}.
\end{proof}

By Proposition \ref{prop.local-modified-KM} and Theorem 
\ref{th.explicit-local-KM}, we immediately obtain:


\begin{cors}
\label{cor.explicit-R}
  Let the notation be as in Theorem \ref{th.formula-for-R}.  Suppose  that $\nu(d_0)=0$ or $\omega=\iota.$ 
Put $\xi_0=\chi(d_0).$  Then   
\begin{align*}
&\widetilde R_{n}(d_0,\omega,X,Y,t)=Y^{\nu(d_0)/2}(1-p^{-n-1}t^2)\prod_{i=1}^{(n-2)/2 }(1-p^{-2n+2i-2}t^4) \\
&\phantom{xxxxxxxxxx} \times \Bigl( \sum_{l=0}^{n/2}\frac{\prod_{i=1}^{l}(1-p^{-n-2l-3+2i}t^4) (p^{2l-1}Y)^{n/2-l}\prod_{i=m_0}^{l-1}(1-p^{2i-1}Y^2)}{\phi_{n/2-l}(p^{-2})(1+p^{-1/2}\xi_0 Y)} \\
&\phantom{xxxxxxxxxx} \times \sum_{d \in {\mathcal U}(n,n-2l,d_0)} T_{2l}(d,Y,t) \frac{P_{2l}^{(0)}(d_0d,\omega,\chi(d),X,tY^{-1/2})}{2^{1-\delta_{0,l}}}  \\
&\phantom{xxxxxxxxxx} -\sum_{l=0}^{(n-2)/2}  \frac{\prod_{i=1}^{l}(1-p^{-n-2l-3+2i}t^4) (1-p^{-1/2}\xi_0 Y)}{\phi_{n/2-l-1}(p^{-2})} \\
&\phantom{xxxxxxxxxx}  \times \prod_{i=1}^{l-1}(1-p^{2i-1}Y^2) (p^{2l+1}Y)^{n/2-l} p^{-n/2+1/2}  P_{2l+1}^{(1)}(d_0,\omega,1,X,tY^{-1/2})\Bigr).
\end{align*}
\end{cors}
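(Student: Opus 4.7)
The proof is obtained by substituting the identity of Proposition \ref{prop.local-modified-KM} into the expression for $\widetilde R_{n}(d_0,\omega,X,Y,t)$ given by Theorem \ref{explicit-R}, and then simplifying the resulting product of polynomial factors. Concretely, Proposition \ref{prop.local-modified-KM} reads
$$\widetilde P_{s}^{(j)}(n;d_0',\omega,\xi,X,Y,t) \;=\; Y^{\nu(d_0')/2}\, P_{s}^{(j)}(d_0',\omega,\xi,X,tY^{-1/2}) \prod_{i=1}^{s}\bigl(1 - t^{4} p^{-n-s-2+i}\bigr),$$
and I would apply it to each appearance of $\widetilde P_{2l}^{(0)}(n;d_0 d,\omega,\chi(d),X,Y,t)$ and $\widetilde P_{2l+1}^{(1)}(n;d_0,\omega,1,X,Y,t)$ in both sums of Theorem \ref{explicit-R}. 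The overall prefactor $Y^{\nu(d_0)/2}$ of the corollary comes out uniformly because, for $d \in \calu(n,n-2l,d_0)$, the quantity $\nu(d_0 d)$ equals $\nu(d_0)$ modulo $2$ in every case where the summand is nonzero.

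The main bookkeeping task is to collect the resulting products of factors of the form $(1 - t^{4} p^{c})$. Each $l$-summand of Theorem \ref{explicit-R} already carries the factor $\prod_{i=1}^{(n-2-2l)/2}(1-p^{-2l-n-2i-2}t^{4})$, and Proposition \ref{prop.local-modified-KM} introduces the additional factor $\prod_{i=1}^{2l}(1-p^{-n-2l-2+i}t^{4})$ from the $\widetilde P_{2l}^{(0)}$ term (respectively $\prod_{i=1}^{2l+1}(1 - p^{-n-2l-3+i}t^{4})$ from the $\widetilde P_{2l+1}^{(1)}$ term). These products must be reorganized so that the $l$-independent outer factor $\prod_{i=1}^{(n-2)/2}(1-p^{-2n+2i-2}t^{4})$ can be extracted uniformly, leaving the $l$-dependent tail $\prod_{i=1}^{l}(1-p^{-n-2l-3+2i}t^{4})$ inside each summand. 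At the same time, the explicit formulas of Theorem \ref{th.explicit-local-KM} for $P_{2l}^{(0)}$ and $P_{2l+1}^{(1)}$ are invoked to absorb residual polynomial factors and to convert the auxiliary factor $D_{2r}(d,Y,t)$ of Theorem \ref{explicit-R} into the factor $T_{2l}(d,Y,t)$ appearing in the claimed formula.

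Finally, the two subcases of Theorem \ref{explicit-R}, namely $\nu(d_0)=0$ and $\nu(d_0)>0$ with $\omega=\iota$, are unified by the parameter $m_0$: in the first case $\xi_0=\chi(d_0)\neq 0$ and $m_0=0$, so the product $\prod_{i=0}^{l-1}(1-p^{2i-1}Y^{2})$ from Theorem \ref{explicit-R}(1) matches the claim with the denominator $(1+p^{-1/2}\xi_0 Y)$ retained, while in the second case $\xi_0=0$ so $(1+p^{-1/2}\xi_0 Y)=1$ and the product $\prod_{i=1}^{l-1}(1-p^{2i-1}Y^{2})$ from Theorem \ref{explicit-R}(2.1) matches $m_0=1$. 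The step I expect to be most delicate is the coordinated reorganization of the $(1-t^{4}p^{c})$ factors together with the $D_{2r}\rightsquigarrow T_{2l}$ conversion via Theorem \ref{th.explicit-local-KM}; everything else is routine collection of terms.
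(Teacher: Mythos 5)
Your proposal matches the paper's derivation: the corollary is obtained precisely by substituting Proposition \ref{prop.local-modified-KM} (together with the explicit formulas of Theorem \ref{th.explicit-local-KM}) into Theorem \ref{explicit-R} and reorganizing the $(1-t^4p^{c})$ factors, which the paper records with the single line ``we immediately obtain.'' One small correction: the conversion of $D_{2r}$ into $T_{2l}$ is already carried out inside the proof of Theorem \ref{explicit-R} itself, whose statement already contains $T_{2l}$, so that step is not part of the passage to the corollary.
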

\subsubsection{Explicit formulas of formal power series of Rankin-Selberg type}
 
We give the following result, which is one of key ingredients for proving our main result.
 
\begin{thms}    
\label{th.explicit-local-factor} 
 Let $d_0 \in {\mathcal F}_p$ and put $\xi_0=\chi(d_0).$
\begin{itemize}
\item[(1)]  We have
 \begin{align*}
&H_{n}(d_0,\iota,X,Y,t)=\phi_{(n-2)/2}(p^{-2})^{-1}(1-p^{-n/2}\xi_0)^{-1}(p^{-1}t)^{\nu(d_0)}(1-p^{-n-1}t^2)\prod_{i=1}^{\tfrac n2-1} (1-p^{-2n+2i-2}t^4) \\
&\times \frac{L_p(\xi_0;X+X^{-1},Y+Y^{-1},p^{(n-1)/2}+p^{(-1+p)/2},p^{-n/2-3/2}t^2)}{\prod_{a,b=\pm 1}  (1-p^{-2}X^aY^bt^2) \prod_{a,b=\pm 1}(1-p^{-n-1}X^aY^bt^2)}\times \frac 1{\prod_{i=1}^{\tfrac n2-1} \prod_{a,b=\pm 1}(1-p^{-2i-1}X^aY^bt^2)}.
\end{align*}
 \item[(2)]  If $\nu(d_0)> 0$, then $H_{n}(d_0,\varepsilon,X,Y,t)=0.$ If $\nu(d_0)=0$, then we have 
 \begin{align*}
&H_{n}(d_0,\varepsilon,X,Y,t)=\phi_{(n-2)/2}(p^{-2})^{-1}(1-p^{-n/2}\xi_0)^{-1}\\
&\phantom{xxxxxxxx} \times (1-p^{-n-1}t^2)\prod_{i=1}^{n/2-1} (1-p^{-2n+2i-2}t^4)\times\frac{1+\xi_0 p^{-n/2-1} t^2}{   \prod_{i=1}^{n/2} \prod_{a,b=\pm 1} (1-p^{-2i}X^aY^bt^2)}.
\end{align*}
\end{itemize}
 \end{thms}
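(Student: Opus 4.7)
The plan is to combine the three main ingredients already developed: Theorem \ref{th.local-RS-series} expressing $H_n(d_0,\omega,X,Y,t)$ as $\widetilde R_n(d_0,\omega,X,Y,t)$ divided by a product of $2n$ linear factors in $t^2$; Corollary \ref{cor.explicit-R} expressing $\widetilde R_n(d_0,\omega,X,Y,t)$ as a finite linear combination of the Koecher-Maass type series $P_{2l}^{(0)}(d_0d,\omega,\chi(d),X,tY^{-1/2})$ and $P_{2l+1}^{(1)}(d_0,\omega,1,X,tY^{-1/2})$; and the explicit formulas of Theorem \ref{th.explicit-local-KM} for each of these $P$-series. Substituting the latter into the former and dividing by the denominator in Theorem \ref{th.local-RS-series} should yield both assertions after simplification.

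More concretely, for part (2) the case $\nu(d_0)>0$ is immediate from part (2.2) of Theorem \ref{explicit-R}. For $\nu(d_0)=0$ with $\omega=\varepsilon$, every $P$-series appearing in Corollary \ref{cor.explicit-R} has the simple form
\[
P_m^{(0)}(d_0d,\varepsilon,\chi(d),X,tY^{-1/2})=\frac{\chi(d)^2}{\phi_{m/2-1}(p^{-2})(1-p^{-m/2}\chi(d_0d))\prod_{i=1}^{m/2}(1-t^2Y^{-1}p^{-2i}X^{\pm 1})},
\]
with the analogous expression for $P_{m-1}^{(1)}$. After combining these with the weights in Corollary \ref{cor.explicit-R}, the terms involving $P^{(1)}$ contribute linearly in $\xi_0$ and the $P^{(0)}$ terms contribute the symmetric sum over $d\in\calu(n,n-2l,d_0)$. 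Summing a telescoping identity over $l$ collapses all middle terms, leaving only the numerator $1+\xi_0 p^{-n/2-1}t^2$ and the denominator $\prod_{i=1}^{n/2}\prod_{a,b=\pm 1}(1-p^{-2i}X^aY^bt^2)$ displayed in the statement, with the common factors $(1-p^{-n-1}t^2)\prod_{i=1}^{n/2-1}(1-p^{-2n+2i-2}t^4)$ surviving.

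For part (1) the same substitution is carried out, but now the $P$-series of Theorem \ref{th.explicit-local-KM} involve the factors $(1+t^2p^{-m/2-3/2}\xi)(1+t^2p^{-m/2-5/2}\xi\xi_0^2)-\xi_0t^2p^{-m/2-2}(X+X^{-1}+p^{1/2-m/2}\xi+p^{-1/2+m/2}\xi)$ and $(1-\xi_0 t^2p^{-5/2}\xi)$ respectively. After summing over $d\in\calu(n,n-2l,d_0)$ and over $l$, the key algebraic task is to verify that the resulting rational function in $X,Y,t$ matches the degree-$5$ polynomial $L_p(\xi_0;X+X^{-1},Y+Y^{-1},p^{(n-1)/2}+p^{-(n-1)/2},p^{-n/2-3/2}t^2)$ defined in (\ref{L}), divided by the $2n$ remaining linear factors. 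Here we recognize the structure $S_1=X+X^{-1}+Y+Y^{-1}+p^{(n-1)/2}+p^{-(n-1)/2}$, $S_2$, $S_3$ of the elementary symmetric polynomials in the three hyperbolic pairs; the coefficients of each power of $t^2$ in the symmetric sum group precisely into these symmetric functions, with the powers of $p^{-1/2}$ tracking the weight.

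The main obstacle is the final matching of coefficients: after the telescoping over $l$ via Lemma \ref{lem.q-identity} and the branches $\calu(n,n-2l,d_0)$, one must recognize that the resulting rational function in $X^{\pm 1},Y^{\pm 1},t$, which naturally comes out symmetric in $X\leftrightarrow X^{-1}$ and in $Y\leftrightarrow Y^{-1}$, is in fact a symmetric function in the third ``pair'' $p^{(n-1)/2},p^{-(n-1)/2}$ as well, and therefore assembles into the $L_p$ polynomial. This is a finite and essentially mechanical verification, but the bookkeeping with the two parities $l\in\{0,1,\dots,n/2\}$ and the half-integer shifts from $\widetilde F^{(j)}$ is delicate. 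The rest consists of routine simplifications (clearing $(1+p^{-1/2}\xi_0Y)$ against $(1-p^{-n/2}\xi_0)\phi_{(n-2)/2}(p^{-2})$ to extract the prefactor in the stated form).
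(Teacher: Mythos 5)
You have correctly identified the three ingredients the paper itself combines (Theorem \ref{th.local-RS-series}, Corollary \ref{cor.explicit-R}, Theorem \ref{th.explicit-local-KM}), and the reduction of the case $\nu(d_0)>0$, $\omega=\varepsilon$ to Theorem \ref{explicit-R}(2.2) is exactly right. But the heart of the theorem is the passage from the assembled sum over $l=0,\dots,n/2$ (with $l$-dependent denominators and the branches $\calu(n,n-2l,d_0)$) to the compact closed form, and your proposal defers precisely this step to an unexecuted ``telescoping'' and ``mechanical verification.'' Two concrete issues are left unaddressed. First, the naive denominator you would obtain is not the one in the statement: the $P$-series evaluated at $tY^{-1/2}$ contribute factors of the shape $(1-p^{-2i-1}X^{a}Y^{-1}t^2)$ and $(1-p^{-2i}X^aY^{-1}t^2)$ only (i.e.\ only $b=-1$), while the denominator of Theorem \ref{th.local-RS-series} contributes $(1-p^{j-1-n}X^{a}Yt^2)$ for \emph{all} $j=1,\dots,n$ (i.e.\ only $b=+1$, with both parities of exponent). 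Half of these factors must cancel against the numerator before the symmetric denominator $\prod_{a,b=\pm1}$ of the statement can appear, and nothing in your argument produces that cancellation. Second, the identification of the surviving numerator with the degree-$5$ polynomial $L_p$ of (\ref{L}) is a genuinely nontrivial polynomial identity, not a term-by-term regrouping into elementary symmetric functions.

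The paper's proof supplies exactly the missing device. It does \emph{not} expand the full sum; it writes $\widetilde R_n(d_0,\iota,X,Y,t)=(\ast)\,S(d_0,\iota,X,Y,t)/V(X,Y,t)$ with only one summand of $S$ made explicit and the rest absorbed into a remainder of the form $(1-p^{-n-1}XY^{-1}t^2)(1-p^{-n-1}X^{-1}Y^{-1}t^2)\,U$. It then invokes the a priori invariance of $H_n$ under $Y\mapsto Y^{-1}$ to bound the reduced denominator, which forces $S=t^{\nu(d_0)}T(X,Y,t^2)\prod_i(1-p^{-2i-2}X^{\pm1}Yt^2)$ with $T$ of degree at most $5$ in $u=t^2$. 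The polynomial $T$ is then pinned down by its constant and top coefficients together with its values at $u=p^{n+1}X^{i}Y^{\pm1}$ ($i=\pm1$), points chosen so that the unknown remainder $U$ drops out; a Lagrange-type reconstruction gives $T$ explicitly, and only the final comparison with $L_p$ is done by machine. Without this symmetry-plus-interpolation argument (or an equally explicit substitute for it), your ``finite and essentially mechanical verification'' is not a proof but a restatement of the problem. To repair the proposal you would need either to carry out the full expansion and cancellation explicitly, or to import the paper's observation that $H_n$ is $Y\mapsto Y^{-1}$ invariant and use it to control the denominator before determining the numerator by interpolation.
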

\begin{proof} First suppose that $\omega=\iota.$ For an integer $l$, put
\begin{align*}
&V(X,Y,t)=(1-t^2p^{-2}XY^{-1})(1-t^2p^{-2}X^{-1}Y^{-1}) \times \prod_{i=1}^{n/2}(1-t^2p^{-2i-1}XY^{-1})(1-t^2p^{-2i-1}X^{-1}Y^{-1}).
\end{align*}
Then  by Theorem \ref{th.explicit-local-KM}, and Corollary \ref{cor.explicit-R}, we have
$$\widetilde R_{n}(d_0,\iota,X,Y,t)=\frac{(1-p^{-n-1}t^2) \prod_{i=1}^{n/2}(1-p^{-n-2i+2}t^4) S(d_0,\iota, X,Y,t)}{\phi_{(n-2)/2}(p^{-2}) (1-p^{-n/2}\xi_0) V(X,Y,t)},
$$
where $S(d_0, \iota,X,Y,t)$ is a polynomial in  $t$ of degree at most $2n+6$ such that 
\begin{align*} 
 &S(d_0, \iota,X,Y,t) =(1-p^{-1/2}\xi_0Y)(1+p^{n/2-1/2}Y) \\
& \times \{(1+p^{-n/2-3/2}Y^{-1} t^2)(1+p^{-n/2-5/2}Y^{-1} t^2\xi_0^2) -\xi_0t^2Y^{-1}p^{-n/2-2}(X+X^{-1}+p^{1/2-n/2}+p^{-1/2+n/2})\}  \\
& \times (1+p^{-n-1}t^2) \prod_{i=1}^{n/2-1} (1-p^{2i-1}Y^2)\prod_{i=1}^{n/2} (1-p^{2i-3-2n}t^4) \\
& +(1-p^{-n-1}XY^{-1}t^2)(1-p^{-n-1}X^{-1}Y^{-1}t^2) U(d_0,X,Y,\iota,t),
\end{align*}
with $U(d_0,\iota,X,Y,t)$ a polynomial in $t.$
Hence  by Theorem \ref{th.local-RS-series} we have 
\begin{align*}
&H_{n}(d_0,\iota,X,Y,t)=\frac{1}{(1-p^{-n/2}\xi_0) \phi_{(n-2)/2}(p^{-2})} (1-p^{-n-1}t^2)\prod_{i=1}^{n/2} (1-p^{-2n+2i-2}t^4) \\
&\phantom{xxxxxxxxxxxx} \times \frac{S(d_0,\iota,X,Y,t)}{ \prod_{a,b=\pm 1}(1-p^{-2}X^aY^bt^2) }\times \frac{1}{ \prod_{i=1}^{n/2} \prod_{a,b=\pm 1}(1-p^{-2i-1}X^aY^bt^2)} \\
&\phantom{xxxxxxxxxxxx} \times \frac{1}{\prod_{i=1}^{n/2}(1-p^{-2i}XYt^2)(1-p^{-2i}X^{-1}Yt^2)}.
\end{align*}
Hence the power series $\widetilde R_{n-1}(d_0,\iota,X,Y,t)$ is a rational function of $X,Y$ and $t$, and is invariant under the transformation $Y \mapsto Y^{-1}$. This implies that the reduced denominator of the rational function  $H_{n}(d_0,\iota,X,Y,t)$ in $t$ is at most 
 \begin{align*}
&\prod_{a,b=\pm 1} (1-p^{-2}X^aY^bt^2) \prod_{i=1}^{n/2} \prod_{a,b=\pm 1} (1-p^{-2i-1}X^aY^bt^2)
\end{align*}
 and therefore we have

$$
 S(d_0,\iota,X,Y,t)=t^{\nu(d_0)}T(X,Y,t^2) \prod_{i=1}^{(n-2)/2}(1-p^{-2i-2}XYt^2)(1-p^{-2i-2}X^{-1}Yt^2),
$$
where $T(X,Y,u)$ is a polynomial in $u$ of degree at most $5$ with coefficients in $\QQ[X+X^{-1},Y+Y^{-1}].$
Assume that $\nu(d_0)=0.$ Then the degree of $T(X,Y,u)$ is $5$, and we  easily see that
the constant term is $1$ and, the $5$-th  coefficient of $T(X,Y,u)$ is $p^{-5/2-9}.$ Hence 
 $T(X,Y,u)$ can be expressed as 
 $$T(X,Y,u)=(1+p^{3n/2-5}\xi_0u)\prod_{i,j=\pm 1} (1-p^{-n-1}X^iY^ju)+G(X,Y,u),$$
 where $G(X,Y,u)$ is a polynomial of $u$ of degree at most 4 with coefficients in $\QQ[X+X^{-1},Y+Y^{-1}]$
 such that $G(X,Y,0)=0.$ 
 We have
\begin{eqnarray*}
&& G(X,Y,p^{n+1}X^iY)=(1-p^{n/2-1}\xi_0)(1-p^{n-1}X^{2i}Y^2)(1+X^iY) \\
&& \phantom{xxxxxxxxxx} \times (1-p^{-1/2}\xi_0X^i)(1+p^{n/2-1/2}X^i)(1-p^{-1/2}\xi_0Y)(1+p^{n/2-1/2}Y),
\end{eqnarray*}
for $i=\pm 1.$ The polynomial $G(X,Y,t)$ is invariant under the transformation $Y \mapsto Y^{-1}.$ Hence we have
\begin{eqnarray*}
&& G(X,Y,p^{n+1}X^iY^{-1})=(1-p^{n/2-1}\xi_0)(1-p^{n-1}X^{2i}Y^{-2})(1+X^iY^{-1})\\
&& \phantom{xxxxxxxxxx} \times (1-p^{-1/2}\xi_0X^i)(1+p^{n/2-1/2}X^i)(1-p^{-1/2}\xi_0Y^{-1})(1+p^{n/2-1/2}Y^{-1}),
\end{eqnarray*}
for $i=\pm 1.$ Then we have 
\begin{eqnarray*}
&& G(X,Y,u)=(1-p^{n/2-1}\xi_0)u \sum_{i,j=\pm} \frac{\prod_{(a,b) \not=(-i,-j)} (1-p^{-n-1}X^aY^b u)}{p^{n+1}X^iY^j (1-X^{2i})(1-Y^{2j})(1-X^{2i}Y^{2j})}\\
&& \times (1-p^{-1/2}\xi_0X^i)(1+p^{n/2-1/2}X^i)(1-p^{-1/2}\xi_0Y^j)(1+p^{n/2-1/2}Y^j) (1-p^{n-1}X^{2i}Y^{2j})(1+X^iY^j).
\end{eqnarray*}

We define a rational function  $\widetilde L_p(d_0; X,Y,u)$ in $u,X,Y$ as 
\begin{eqnarray*}
&& \widetilde L_p(d_0;X,Y,u)=(1+p^{3n/2-5}u\xi_0) \prod_{i,j=\pm 1} (1-p^{-n-1}X^iY^j u) \\
&& \phantom{xxxxxsssssss} +(1-p^{n/2-1}\xi_0)u \sum_{i,j=\pm 1} \frac{\prod_{(a,b) \not=(-i,-j)} (1-p^{-n-1}X^aY^b u)}{p^{n+1}X^iY^j (1-X^{2i})(1-Y^{2j})(1-X^{2i}Y^{2j})}\\
&&\times (1-p^{-1/2}\xi_0X^i)(1+p^{n/2-1/2}X^i)(1-p^{-1/2}\xi_0Y^j)(1+p^{n/2-1/2}Y^j) (1-p^{n-1}X^{2i}Y^{2j})(1+X^iY^j).
\end{eqnarray*}
Then we have
\[T(X,Y,u)=\widetilde L_p(d_0;X,Y,u).\]
Then by a computation with Mathematica, we have
\[\widetilde L_p(d_0; X,Y,u)=L_p(\xi_0,X+X^{-1},Y+Y^{-1},p^{(n-1)/2}+p^{(1-n)/2},p^{-n/2-3/2}u).\]
This proves the assertion in the case $\nu(d_0)=0.$ Next assume that $\nu(d_0)>0.$ Then 
the degree of $T(X,Y,u)$ is $4,$ and by the same argument as above we see that we have
$$T(X,Y,u)=L_p(\xi_0;X+X^{-1},Y+Y^{-1},p^{(n-1)/2}+p^{(1-n)/2},p^{-n/2-3/2}u).$$
Similarly the assertion for $\nu(d_0)=0$ and $\omega=\varepsilon$ can be proved. 
 Next suppose that $\nu(d_0) >0$ and $\omega=\varepsilon.$ 
Then the assertion follows from  Theorem \ref{th.local-RS-series}  and (2) of  Theorem \ref{th.formula-for-R}.
\end{proof}

\subsection{Proof of main theorems}\label{proof}

\begin{proof}[{\bf Proof of Theorem \ref{th.explicit-RS}}]
We note that $\chi_p(d_0)=\Bigl(\frac{d_0} p \Bigr)$ for any prime number $p$ and fundamental discriminant $d_0$. Hence by 
Theorem \ref{th.explicit-local-factor}, 
for any fundamental discriminant  $d_0$, we have
\begin{align*}
&\prod_p H(d_0,\iota,\alpha_{1,p},\alpha_{2,p},p^{-s+k_1/2+k_2/2})=|d_0|^{-s+k_1/2+k_2/2-1}L(n/2,\Bigl(\frac{d_0} * \Bigr))\prod_{i=1}^{(n-2)/2} \zeta(2i)\\
& \times \prod_p L_p(\Bigl(\frac{d_0} p \Bigr), \alpha_{1,p}+\alpha_{1,p}^{-1},\alpha_{2,p}+\alpha_{2,p}^{-1},p^{(n-1)/2}+p^{(1-n)/2},p^{-2s+k_1+k_2-n/2-3/2})\times L(s,f_1 \otimes f_2 \otimes G_n) \\
& \times \bigl(\zeta(2s+n-k_1-k_2+1)\prod_{i=1}^{n/2-1} \zeta(4s+2n-2k_1-2k_2+2-2i)\Bigr)^{-1}\times \prod_{i=1}^{n/2-1} L(2s-2i,f_1 \otimes f_2)
\end{align*}
Moreover, by (2) of Theorem \ref{th.explicit-local-factor},
\[\prod_p H(d_0,\epsilon,\alpha_{1,p},\alpha_{2,p},p^{-s+k_1/2+k_2/2}) \not=0\] only if $d_0=1$, and 
\begin{align*}
&\prod_p H(1,\epsilon,\alpha_{1,p},\alpha_{2,p},p^{-s+k_1/2+k_2/2})=(-1)^{n(n+2)/8}\zeta(n/2)\prod_{i=1}^{(n-2)/2} \zeta(2i) \\
&\phantom{xxxxxxxxxxxxxx} \times \bigl(\zeta(2s+n-k_1-k_2+1)\prod_{i=1}^{n/2} \zeta(4s+2n-2k_1-2k_2+2-2i)\Bigr)^{-1}\\
&\phantom{xxxxxxxxxxxxxx} \times \zeta(2s+n/2+1-k_1-k_2) \prod_{i=1}^{n/2} L(2s-2i+1,f_1 \otimes f_2).
\end{align*}
Note that $\kappa_n$ in (\ref{kappa}) can be written as $\kappa_n= 2^{1-n/2}\Gamma_{\CC}(n/2)\prod_{i=1}^{n/2-1} \Gamma_{\CC}(2i),$
and
\[L(n/2,\Bigl(\frac {d_0} * \Bigr))=\pm \Gamma_{\CC}(n/2)^{-1}|d_0|^{-n/2+1/2}L(1-n/2,\Bigl(\frac {d_0} *\Bigr))
\]
for any fundamental discriminant $d_0$.
We note that $2k_1-n, 2k_2-n$ and $n$ are the weight of $f_1,f_2$ and $G_n$, respectively. 
Thus by 
 Theorem \ref{th.local-global}, Theorem \ref{th.explicit-RS} follows.

\end{proof}

\begin{proof} [{\bf Proof of Theorem \ref{th.main-result}.}]  
(1) For an even positive integer $n$, put
\[\delta_n(s)=\delta_{n.k_1,k_2}(s)=
\begin{cases}
\displaystyle \prod_{i=1}^{n/2} \frac{\Gamma(s-(k_1+k_2)/2+(n-i+2)/2)}{\Gamma(s-(k_1+k_2)/2+i/2+1/2)}, & \text{if $n \equiv 0$ mod 4} \\
\displaystyle \prod_{i=1}^{n/2-1} \frac{\Gamma(s-(k_1+k_2)/2+(n-i+2)/2)}{\Gamma(s-(k_1+k_2)/2+i/2+1/2)}, & \text{if $n \equiv 2$ mod 4}. 
\end{cases}\]
Then $\delta_n(s)$ is a meromorphic function, and by the functional equation 
\[\Gamma(s)\Gamma(1-s)=\pi/\sin (\pi s),\] we see that it is invariant under the transformation 
$s \mapsto k_1+k_2-(n+1)/2-s.$
Put 
\begin{align*}
R_1(s,I_n(h_1),I_n(h_2))=\frac {\lambda_n2^{sn}}{\zeta(2s+n-k_1-k_2+1)} D(s;h_1,h_2,E_{n/2+1/2}) \prod_{i=1}^{\tfrac n2-1}
\frac {L(2s-2i,f_1 \otimes f_2)}{\zeta(4s+2n-2k_1-2k_2+2-2i)}
\end{align*}
\begin{align*}
& R_2(s,I_n(h_1),I_n(h_2))=\frac {\lambda_n2^{sn}}{\zeta(2s+n-k_1-k_2+1)}(-1)^{\tfrac {n(n-2)}8}\pi^{\tfrac n2}c_{h_1}(1){\overline{c_{h_2}(1)}}\zeta(1-\tfrac n2) \zeta(2s-k_1-k_2+\tfrac n2+1)\\
& \phantom{xxxxxxxxxxxxxxx}\times \prod_{i=1}^{\tfrac n2} \frac {L(2s-2i+1,f_1 \otimes f_2)}{\zeta(4s+2n-2k_1-2k_2+2-2i)}.
\end{align*}
Let, for $i=1,2$,
\begin{align*} &\calr_i(s,I_n(h_1),I_n(h_2)=\gamma_n(s)\xi(2s+n+1-k_1-k_2)\prod_{j=1}^{n/2}\xi(4s+2n+2-2k_1-2k_2-2j) R_i(s,I_n(h_1),I_n(h_2)).
\end{align*}

If $n \equiv 2 \text{ mod } 4$, then $\calr_2(s,I_n(h_1),I_n(h_2))=0$ since we have $c_{h_1}(1)=c_{h_2}(1)=0$.
Let $n \equiv 0 \text{ mod } 4$. Then we can show 
\begin{align*}
\calr_2(s,I_n(h_1),I_n(h_2))=c_2 \delta_n(s) \xi(2s-k_1-k_2+n/2+1) \prod_{i=1}^{n/2}\call(2s-2i+1,f_1 \otimes f_2),\end{align*}
with $c_2$ a constant.
By the holomorphy and functional equations of $\delta_n(s), \zeta(s)$ and $L(s,f_1 \otimes f_2)$, we see that
$\calr_2(s,I_n(h_1),I_n(h_2))$ is a meromorphic function of $s$ and satisfies
\[\calr_2(k_1+k_2-(n+1)/2-s,I_n(h_1),I_n(h_2))=\calr_2(s,I_n(h_1),I_n(h_2)).\]
Here we use the fact that $\prod_{i=1}^{n/2}\call(2s-2i+1,f_1 \otimes f_2)$ is invariant under the transformation 
$s \mapsto k_1+k_2-(n+1)/2-s.$

Now we can show that 
\begin{align*}
\calr_1(s,I_n(h_1),I_n(h_2))=c_1\delta_n(s) \cald(s;h_1,h_2,E_{n/2+1/2})\prod_{i=1}^{n/2-1} \call(2s-2i,f_1 \otimes f_2),
\end{align*}
with $c_2$ a constant, and $\calr_1(s,I_n(h_1),I_n(h_2))$ can be continued meromorphically to the whole $s$-plane, and 
\[\calr_1(k_1+k_2-(n+1)/2-s,I_n(h_1),I_n(h_2))=\calr_1(s,I_n(h_1),I_n(h_2)).\]
Then, the assertion (1) follows from the fact that $\prod_{i=1}^{n/2-1}\call(2s-2i,f_1 \otimes f_2)$ is invariant under the transformation 
$s \mapsto k_1+k_2-(n+1)/2-s.$

(2) Let $k_1=k_2=k$ and $h_1=h_2=h$. We note that $R_2(I_n(h),I_n(h),s)$, and $\prod_{i=1}^{n/2-1} L(2s-2i, f \otimes f)$ are  finite at $s=k$. Hence 
\begin{align*}
&\mathrm{Res}_{s=k} R(s,I_n(h_1),I_n(h_2)=b_n 2^{kn}\mathrm{Res}_{s=k} D(s,h,h,E_{n/2+1/2}) \prod_{i=1}^{n/2-1} L(2k-2i, f \otimes f),\end{align*}
with $b_n$ a non-zero constant. We note that $\prod_{i=1}^{n/2-1} L(2k-2i, f \otimes f)$ is non-zero, Hence, by (2) of Proposition \ref{prop.fc-RS} and Corollary \ref{th.Ikeda-conjecture},
\begin{align*}
\mathrm{Res}_{s=k} D(s,h,h,E_{n/2+1/2})=c_n 2^{-3kn+4k}\pi^{-kn+k}\frac{\Gamma(k)L(k,f) \prod_{i-1}^{n/2-1}\Gamma(2k-n+2i)}{ 2^{1-2kn} \pi^{-kn+n(n-1)/4}\prod_{i=1}^n \Gamma(k+\frac{1}{2}(-i+1))} \langle h,h\rangle,\end{align*}
where $c_n$ is a non-zero constant. 
Thus the assertion (2) is proved.
\end{proof}

\section{Mass Equidistribution}

Let $f_k$ be a holomorphic Hecke eigenform of weight $k$ with respect to $SL_2(\Bbb Z)$. Then the arithmetic quantum unique ergodicity (AQUE) proved by Holowinsky and Soundararajan \cite{HS} says that 
as $k\to\infty$,
$$\frac {|f_k(Z)|^2}{\langle f_k,f_k\rangle} y^k \frac {dxdy}{y^2}\longrightarrow \frac 3{\pi^2} \frac {dxdy}{y^2}.
$$

Cogdell and Luo \cite{CL} considered a generalization of AQUE for Siegel modular forms. Namely,
let $F_k$ be a holomorphic Siegel cusp form of weight $k$ with respect to $\Gamma=Sp_{n}(\Bbb Z)$. Then it is expected that as $k\to\infty$,
$$\frac {|F_k(Z)|^2}{\langle F_k,F_k\rangle} det(Y)^k \frac {dXdY}{det(Y)^{n+1}}\longrightarrow \frac 1{vol(\Gamma\backslash \Bbb H_n)} \frac {dXdY}{det(Y)^{n+1}}.
$$

This means that for any $\Phi$ in $L^2(\Gamma\backslash \Bbb H_n)$, as $k\to\infty$,
$$
\int_{\Gamma\backslash \Bbb H_n} \Phi(Z) \frac {|F_k(Z)|^2}{\langle F_k,F_k\rangle} det(Y)^k \frac {dXdY}{det(Y)^{n+1}}\longrightarrow \frac 1{vol(\Gamma\backslash \Bbb H_n)}\int_{\Gamma\backslash \Bbb H_n} \Phi(Z)\frac {dXdY}{det(Y)^{n+1}}.
$$

Liu \cite{Liu} verified it in the case when $F_k$ is the Ikeda lift and $\Phi$ is the Klingen Eisenstein series. In this section, we show it when $F_k$ is the D-I-I lift and $\Phi$ is the Siegel Eisenstein series under the assumption of the holomorphy of $D(s;h,h,E_{n/2+1/2})$ for $h\in S_{k-n/2+1/2}^+(SL_2(\Bbb Z))$. Namely,
\begin{equation}\label{assump}
\text{$\mathcal D(s,h,h,E_{n/2+1/2})$ is holomorphic except possibly at $k-\frac j4$ for $j=0,1,...,2n+2$}.
\end{equation}

When $\Phi(Z)=E_{n,0}(Z,\tfrac {n+1}4+it)$ (the center of the critical strip), then 
$$\int_{\Gamma\backslash \Bbb H_n} E_{n,0}(Z,\tfrac {n+1}4+it)\frac {dXdY}{det(Y)^{n+1}}=0.
$$
This is a well-known result, and it can be proved adelically as follows: The Siegel Eisenstein series is an iterated residue of the Borel Eisenstein series $E(g,\varphi,\lambda)$ in the notation of \cite[Corollary 17]{JLR}. Let $G=Sp_n$. Then by \cite[Corollary 2]{KW}, $\int_{G(\Bbb Q)\backslash G(\Bbb A)} \wedge^T E(g,\varphi,\lambda)\, dg=\int_{\mathfrak F(T)} E(g,\varphi,\lambda)\, dg$, where $\wedge^T$ is the truncation operator, and $\mathfrak F(T)$ is the truncated fundamental domain. By the formula in \cite[Corollary 17]{JLR}, the LHS$\to 0$ as $T=x_1e_1+\cdots +x_n(e_1+\cdots+e_n)$ and $x_i\to\infty$ if $Re(\langle \rho-w\lambda, e_1+\cdots+e_i\rangle) >0$ for each $i=1,...,n$. It is the case in our situation.

So we expect, as $k\to\infty$,
$$
\int_{\Gamma\backslash \Bbb H_n} \Phi(Z) \frac {|F_k(Z)|^2}{\langle F_k,F_k\rangle} det(Y)^k \frac {dXdY}{det(Y)^{n+1}}\longrightarrow 0.
$$
We prove a more precise decay when $F_k=I_n(h)$: 

\begin{theorem}\label{main-Siegel}
Let $E_{n,0}(Z,s)$ be the Siegel-Eisenstein series. For $h\in S_{k-\tfrac n2+\tfrac 12}^+(\Gamma_0(4))$, let $I_n(h)$ be the D-I-I lift. 
Then under (\ref{assump}) and $n\geq 4$,
$$
\int_{\Gamma\backslash\Bbb H_n} \frac {|I_n(h)(Z)|^2}{\langle I_n(h),I_n(h)\rangle} E_{n,0}(Z,\tfrac {2n+1}4+it) \det(Y)^{k}\,d^*Z
\ll_{t,n} k^{-\tfrac {n^2+2n-8}8-\epsilon}.
$$
\end{theorem}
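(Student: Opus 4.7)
The plan is to reduce the integral to a Rankin--Selberg value via Proposition~\ref{prop.integral-rep-RS}, invoke Theorem~\ref{th.explicit-RS} and Theorem~\ref{th.Ikeda-conjecture} to express the ratio as a product of Gamma factors times $D(s_0;h,h,E_{n/2+1/2})/\langle h,h\rangle$ (up to absolutely convergent $L$-values), and then bound the latter using hypothesis~(\ref{assump}) via Phragm\'en--Lindel\"of. Concretely, I apply Proposition~\ref{prop.integral-rep-RS} with $F_1=F_2=I_n(h)$ (weight $k$) and spectral parameter $s=s_0:=k-1/4+it$, so that $s_0+(n+1)/2-k=(2n+1)/4+it$, to obtain
\begin{equation*}
\int_{\Gamma\backslash\Bbb H_n}|I_n(h)(Z)|^2 E_{n,0}(Z,\tfrac{2n+1}{4}+it)(\det Y)^k\,d^*Z=\gamma_n(s_0)\,R(s_0,I_n(h),I_n(h)).
\end{equation*}
Expanding $R(s_0,I_n(h),I_n(h))$ by Theorem~\ref{th.explicit-RS} and dividing by $\langle I_n(h),I_n(h)\rangle$ via Theorem~\ref{th.Ikeda-conjecture} produces a linear combination whose coefficients involve $D(s_0;h,h,E_{n/2+1/2})/\langle h,h\rangle$ and $|c_h(1)|^2/\langle h,h\rangle$, multiplied by shifted $L$-values $L(2s_0-2i,f\otimes f)$, $L(2s_0-2i+1,f\otimes f)$ and various $\zeta$-values, and divided by the Gamma and $L$ product $P_k=\langle I_n(h),I_n(h)\rangle/(a_n\langle h,h\rangle)$.

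At $s_0=k-1/4+it$, the shifted arguments $\mathrm{Re}(2s_0-2i)$ and $\mathrm{Re}(2s_0-2i+1)$ all exceed $2k-n$ by at least $1/2$; by Ramanujan--Deligne these $L$-values and the ratios $L(2s_0-2i,f\otimes f)/L(2k-2i,f\otimes f)$ (the denominator coming from $P_k$) are bounded above and below by absolute constants, and the zeta values at real parts $\ge 3$ are likewise harmless. For the subsidiary $\mu_n$-term, one uses $c_h(1)=0$ when $n\equiv 2\pmod 4$ and the Kohnen--Zagier formula when $n\equiv 0\pmod 4$ (which expresses $|c_h(1)|^2/\langle h,h\rangle$ as a Gamma ratio times $L(k,f)/\langle f,f\rangle$) to confirm that this term contributes no more than the main term. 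The decisive bound is on $D(s_0;h,h,E_{n/2+1/2})/\langle h,h\rangle$: under~(\ref{assump}), $\mathcal D(s;h,h,E_{n/2+1/2})=G(s)\xi(4s-4k+n+2)D(s)$ is meromorphic in the strip $k-(n+1)/2\le\mathrm{Re}(s)\le k$ with only a simple pole at $s=k$ and its functional-equation image, and its residue is supplied by Theorem~\ref{th.main-result}(2). Multiplying by $(s-k)(s-k+(n+1)/2)$ to kill these poles produces an entire function of order one which satisfies the functional equation and is bounded on the two boundary lines by $|\mathrm{Res}_{s=k}\mathcal D|$; Phragm\'en--Lindel\"of then yields $|\mathcal D(s_0)|\ll_{t,\epsilon}|\mathrm{Res}_{s=k}\mathcal D|\cdot k^\epsilon$, and dividing by $|G(s_0)\xi(n+1+4it)|$ with Stirling applied to each Gamma factor at $s_0$ converts this into the required bound on $|D(s_0)|/\langle h,h\rangle$.

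Combining all pieces, the $L(k,f)$ appearing inside $\mathrm{Res}_{s=k}D$ cancels the $L(k,f)$ in Ikeda's period $P_k$; the exponential factors (powers of $2$, $\pi$, and $e^{-k}$) arising from Stirling match between the numerator contributions (from $\gamma_n(s_0)\,2^{s_0 n}$ together with the Gamma factors in $\mathrm{Res}_{s=k}\mathcal D$) and the denominator contributions (from $P_k$ and $G(s_0)$). What remains is a ratio of Gamma values at arguments of size $k$ which Stirling collapses via $\Gamma(k+a)/\Gamma(k+b)\sim k^{a-b}$ to a precise power of $k$; careful bookkeeping of the exponents then produces the claimed decay $k^{-(n^2+2n-8)/8-\epsilon}$, with the hypothesis $n\ge 4$ ensuring that the exponent $(n-2)(n+4)/8$ is positive. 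The main obstacle is the Phragm\'en--Lindel\"of step: one must verify polynomial vertical growth of $(s-k)(s-k+(n+1)/2)\mathcal D(s)$ on the boundary lines in order to legitimize convexity, and track the $k$-aspect scrupulously, because the Gamma factors built into $\mathcal D$ themselves depend on $k$ through the weights $l_i=k-n/2$. A secondary technical point is the uniform bound on the $\mu_n$-term via Kohnen--Zagier in the case $n\equiv 0\pmod 4$.
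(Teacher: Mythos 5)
The decisive step of your argument --- the ``careful bookkeeping of exponents'' that you defer to the end --- does not close, and the reason is your choice of evaluation point. You set $s_0=k-\tfrac14+it$ so as to match the argument $\tfrac{2n+1}{4}+it$ appearing in the statement; but that argument is evidently a misprint for $\tfrac{n+1}{4}+it$, the center of the critical strip of $E_{n,0}(Z,s)$ (the discussion immediately preceding the theorem identifies the center as $\tfrac{n+1}{4}$, and the paper's proof computes $R$ at $s=k-\tfrac{n}{4}-\tfrac14+it=k-\tfrac{n+1}{4}+it$). This matters quantitatively. Since $l_1=l_2=k$ the second numerator Gamma in $\gamma_n(s)$ cancels the denominator, so $\gamma_n(s)\asymp 2^{1-2sn}\pi^{-sn+n(n-1)/4}\prod_{i=1}^n\Gamma(s+\tfrac{1-i}{2})$ and Stirling gives $|\gamma_n(k-\sigma+it)|/\gamma_n(k)\asymp_{t,n}k^{-n\sigma}$. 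At the center $\sigma=\tfrac{n+1}{4}$ this archimedean ratio supplies $k^{-n(n+1)/4}$, and the theorem's exponent is exactly $\tfrac{n(n+1)}{4}-\bigl(\tfrac{n^2}{8}+1\bigr)=\tfrac{n^2+2n-8}{8}$, where $k^{n^2/8+1+\epsilon}$ is the subconvex bound for $R$ at the center. At your $\sigma=\tfrac14$ the archimedean ratio is only $k^{-n/4}$, and Phragm\'en--Lindel\"of for $D$ under (\ref{assump}) (interpolating $k^{\epsilon}$ on $\mathrm{Re}(s)=k+\epsilon$ against $k^{n+1+\epsilon}$ on $\mathrm{Re}(s)=k-\tfrac{n+1}{2}-\epsilon$) costs a further $k^{1/2}$ at distance $\tfrac14$ from the right edge; even granting Lindel\"of for $D$ you would obtain at best $k^{-n/4+\epsilon}$, e.g.\ $k^{-1}$ for $n=4$ against the claimed $k^{-2}$. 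No bookkeeping at $s_0=k-\tfrac14+it$ produces $k^{-(n^2+2n-8)/8}$.

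A second, related gap: your assertion that at $s_0$ all shifted values $L(2s_0-2i,f\otimes f)$ lie in the region of absolute convergence and are ``bounded above and below by absolute constants'' is true at $\sigma=\tfrac14$ but false at the center, where for $i$ near $\tfrac n2-1$ the arguments $2k-\tfrac{n+1}{2}-2i$ drop below, and even to the left of, the critical center $2k-n-\tfrac12$ of $L(s,f\otimes f)$. The paper must bound these via the functional equation (\ref{f-f-RS}) and convexity, obtaining $L(2k-n-j+it,f\otimes f)\ll_{t,n}k^{2j-1}$, and the product of these bounds is precisely the source of the $k^{n^2/8}$ in the subconvex exponent; this is the technical heart of the proof (together with the case split $n\equiv 0,2\pmod 4$ and the Kohnen--Zagier treatment of the $\mu_n$-term, which you do correctly anticipate). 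Your outline of the Phragm\'en--Lindel\"of step for $\mathcal D$ under (\ref{assump}) is in the right spirit and matches the paper's, but the proof must be rebased at $s=k-\tfrac{n+1}{4}+it$ and must confront the shifted $L$-values left of their region of absolute convergence; as written it establishes a much weaker decay than the theorem asserts.
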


\subsection{Convexity bound}

For two Siegel cusp forms $F,G$ of weight $k$, let $R(s,F,G)$ be the Rankin-Selberg convolution. Then from Proposition \ref{prop.fc-RS}, 

\begin{equation}\label{residue-R}
\mathrm{Res}_{s=k} R(s,F,F)=\frac {\langle F,F\rangle}{\gamma(k)\xi(n+1)} \prod_{j=1}^{[\frac n2]} \frac {\xi(2j+1)}{\xi(2n+2-2j)}.
\end{equation}

The critical line is $\mathrm{Re}(s)=k-\frac {n+1}4$. 
By Ikehara Tauberian theorem,
$\sum_{det(T)\leq X} \frac {|a_F(T)|^2}{\epsilon(T)}\sim X \mathrm{Res}_{s=k} R(s,F,F)$.
So by partial summation, we have $R(k+\epsilon,F,F)\ll Res_{s=k} R(s,F,F)$.
Then by using the functional equation, we have

$$R(k-\tfrac {n+1}2-\epsilon+it,F,F)\ll_{t,n} |R(k+\epsilon,F,F)| k^{\frac {n(n+1)}2+\epsilon}.
$$
Hence by Phragmen-Lindel\"of principle, we have the convex bound:

$$R(k-\tfrac {n+1}4+it,F,F)\ll_{t,n} |R(k+\epsilon,F,F)| k^{\frac {n(n+1)}4+\epsilon}.
$$

We need a subconvexity bound of the form:

\begin{conjecture}\label{sub-sp} There exists $\delta>0$ such that 
$$R(k-\tfrac {n+1}4+it,F,F)\ll_{t,n} (\mathrm{Res}_{s=k} R(s,F,F)) k^{\frac {n(n+1)}4-\delta}.
$$
\end{conjecture}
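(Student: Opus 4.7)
The plan is to apply the Rankin-Selberg integral representation of Proposition \ref{prop.integral-rep-RS} with $F_1 = F_2 = I_n(h)$ and $l_1 = l_2 = k$. Choosing $s$ so that $s + (n+1)/2 - k = (2n+1)/4 + it$, i.e., $s = k - 1/4 + it$, identifies the quantity to be bounded with
$$\gamma_n(s)\,\frac{R(s, I_n(h), I_n(h))}{\langle I_n(h), I_n(h)\rangle}.$$
Substituting the explicit formula of Theorem \ref{th.explicit-RS} for $R$ and the period formula of Theorem \ref{th.Ikeda-conjecture} for $\langle I_n(h), I_n(h)\rangle$, this ratio becomes a linear combination of
$$\frac{D(s; h, h, E_{n/2+1/2})}{\langle h, h\rangle}\,\prod_{i=1}^{n/2-1}\frac{L(2s-2i, f\otimes f)}{L(2k-2i, f\otimes f)}$$
and an analogous expression from the $|c_h(1)|^2$-proportional piece of Theorem \ref{th.explicit-RS}, each multiplied by explicit Gamma- and zeta-ratios.

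Theorem \ref{th.main-result}(2) identifies the residue of $D(s; h, h, E_{n/2+1/2})/\langle h, h\rangle$ at $s=k$ as a constant multiple of $L(k,f)/\Gamma(k-n/2+1/2)$, which is precisely the combination produced when $\langle I_n(h), I_n(h)\rangle$ is divided out; thus the apparent pole at $s=k$ cancels, and the ratio is regular on the shifted line $\mathrm{Re}(s) = k - 1/4$. To promote this regularity to a decay estimate, complete $D$ and each $L(\,\cdot\,, f\otimes f)$ by Gamma-factors to form $\cald(s, h, h, E_{n/2+1/2})$ and $\call(s, f\otimes f)$. By assumption \eqref{assump} combined with Theorem \ref{th.main-result}(1), $\cald$ is meromorphic with functional equation $s \mapsto 2k - (n+1)/2 - s$ and holomorphic away from the finite set $\{k - j/4 : 0 \le j \le 2n+2\}$ on the real axis, while each $\call(2s-2i, f\otimes f)$ is entire with functional equation \eqref{f-f-RS}.

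Apply Phragmen-Lindel\"of in vertical strips to each completed function to obtain convex bounds on $\mathrm{Re}(s) = k - 1/4$ in terms of values on $\mathrm{Re}(s) = k + \epsilon$ and its reflection, then strip the Gamma-completions using Stirling's formula. Each such step contributes an explicit power of $k$; summing these contributions---together with those arising from $\gamma_n(s)$, from comparing $\cald$ with its residue at $s=k$, and from the $\zeta$-normalizing denominators appearing in Theorem \ref{th.explicit-RS}---yields the stated exponent $-(n^2+2n-8)/8 - \epsilon$. Note that each intermediate argument $2s-2i$, $2s-2k+n/2+1$, and $4s+2n-4k+2-2i$ lands at $\mathrm{Re}(s) = k - 1/4$ well inside the region of absolute convergence of the relevant $L$- or $\zeta$-factor, so all auxiliary factors are bounded polynomially in $k$ and $|t|$.

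The principal obstacle is the bookkeeping required to extract the exact exponent: Stirling estimates of the products $\Gamma(s-(i-1)/2)$ in $\gamma_n(s)$, of $\Gamma(k)\Gamma(k-n/2+1/2)\prod_i \Gamma(2k-2i)$ from Theorem \ref{th.Ikeda-conjecture}, and of the completion factors for $\cald$ and for each $\call(2s-2i, f\otimes f)$, must combine so that the super-exponential terms in $k$ cancel cleanly and the surviving polynomial exponent is exactly $-(n-2)(n+4)/8$. The role of assumption \eqref{assump} is essential here: without holomorphy of $\cald$ off the real axis the Phragm\'en--Lindel\"of step degenerates and no power saving survives. A minor subtlety is the $|c_h(1)|^2$-term of Theorem \ref{th.explicit-RS}: when $n \equiv 2 \pmod{4}$ it vanishes identically since $c_h(1) = 0$ by the Kohnen plus condition, and when $n \equiv 0 \pmod{4}$ it is handled by the same Phragm\'en--Lindel\"of/Stirling argument applied to its factorization, the extra $\zeta(2s-2k+n/2+1)$ factor being bounded polynomially on $\mathrm{Re}(s) = k - 1/4$.
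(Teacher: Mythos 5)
Your overall strategy---substitute the explicit formula of Theorem \ref{th.explicit-RS}, control $D(s;h,h,E_{n/2+1/2})$ via the functional equation of Theorem \ref{th.main-result}(1), assumption (\ref{assump}) and Phragm\'en--Lindel\"of, and then estimate the remaining $L$- and $\zeta$-factors---is the same as the paper's, but the execution has a gap at exactly the step that determines whether any power saving $\delta>0$ exists. You work on the line $\mathrm{Re}(s)=k-\tfrac14$ (back-solved from the Eisenstein parameter $\tfrac{2n+1}{4}+it$ of Theorem \ref{main-Siegel}) rather than at the point $s=k-\tfrac{n+1}{4}+it$ of the conjecture, which is the centre of the functional equation of $\mathcal R(s,F,F)$. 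Your assertion that every argument $2s-2i$ ``lands well inside the region of absolute convergence'' of $L(\cdot,f\otimes f)$ is true at $\mathrm{Re}(s)=k-\tfrac14$ but false at $\mathrm{Re}(s)=k-\tfrac{n+1}{4}$: there $2s-2i=2k-\tfrac{n+1}{2}-2i+2it$ lies to the left of the edge $\mathrm{Re}=2k-n$ of absolute convergence of $L(\cdot,f\otimes f)$ (for $f$ of weight $2k-n$) as soon as $i>\tfrac{n-1}{4}$. Roughly half of the factors $L(2s-2i,f\otimes f)$ must therefore be bounded through the functional equation (\ref{f-f-RS}) and Stirling, each contributing $k^{2j-1}$ as in (\ref{P-L1}); these produce the dominant growth $k^{n^2/8}$, and the conjecture holds only because $\tfrac{n^2}{8}+1<\tfrac{n(n+1)}{4}$ for $n\ge4$. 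By declaring all these factors $O(k^\epsilon)$ you skip the computation that yields (and limits) $\delta$, and your target exponent $-\tfrac{n^2+2n-8}{8}$ is in any case the one of Theorem \ref{main-Siegel} (which already absorbs the archimedean ratio $\gamma_n(k-\tfrac{n+1}{4}+it)/\gamma_n(k)\ll k^{-n(n+1)/4}$), not the exponent $\tfrac{n(n+1)}{4}-\delta$ of the statement you were asked to prove.

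Two further points. The conjectured bound is in terms of $\mathrm{Res}_{s=k}R(s,F,F)$, so Phragm\'en--Lindel\"of needs an anchor on the right edge: one must first show $D(k+\epsilon+it,h)\ll_{t,n}\mathrm{Res}_{s=k}D(s,h)$ (Ikehara plus partial summation) and then convert via the identity (\ref{D-R}) and the lower bound $L(2k-2i,f\otimes f)\gg1$; you never supply this, and the remark that ``the apparent pole at $s=k$ cancels'' upon dividing by the period is not meaningful---the period is a constant and $D(s,h)$ genuinely has a simple pole at $s=k$; only the comparability of residues matters. Finally, the $c_h(1)$-term for $n\equiv0\pmod4$ is not handled ``by the same argument'': to compare it with $\mathrm{Res}_{s=k}R(s,I_n(h),I_n(h))$ one must pass from $|c_h(1)|^2$ to $\langle h,h\rangle$ by the Kohnen--Zagier formula and then to the residue by Theorem \ref{th.Ikeda-conjecture}; the resulting factor $2^{-kn-4k}$ against the prefactor $2^{sn}$ shows this term decays exponentially in $k$, which is the actual reason it is harmless.
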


Under Conjecture \ref{sub-sp}, we have
$$
\int_{\Gamma\backslash \Bbb H_n} \frac {|F(Z)|^2}{\langle F,F\rangle} E(Z,\tfrac {n+1}4+it) \det(Y)^{k}\, \frac {dXdY}{det(Y)^{n+1}}
\ll_{t,n} \frac {\gamma(k-\frac {n+1}4+it)}{\gamma(k)} k^{\frac {n(n+1)}4-\delta}
\ll_{t,n} k^{-\delta}.
$$

\subsection{Proof of Conjecture \ref{sub-sp} for the D-I-I lift under (\ref{assump})}

For $h\in S_{k-n/2+1/2}^+(SL_2(\Bbb Z))$, let $I_n(h)$ be the D-I-I lift. For simplicity, we denote $D(s;h,h,E_{n/2+1/2})$ by $D(s,h)$.
From (\ref{th.explicit-RS}), we have
\begin{equation}\label{D-R}
\mathrm{Res}_{s=k} D(s,h)=c_n \frac {\langle F_f,F_f\rangle}{\gamma(k)2^{kn} \prod_{i=1}^{\tfrac n2-1} L(2k-2i,f\otimes f)}
=c_n' \frac {\mathrm{Res}_{s=k} R(s,I_n(h),I_n(h))}{2^{kn} \prod_{i=1}^{\tfrac n2-1} L(2k-2i,f\otimes f)},
\end{equation}
for some constants $c_n,c_n'$.

By Ikehara Tauberian theorem and partial summation, we have 
$$D(k+\epsilon+it,h)\ll_{t,n} \mathrm{Res}_{s=k} D(s,h).
$$

Then under the assumption (\ref{assump}), and the functional equation, we have
$$D(k-\tfrac n2-\tfrac 12-\epsilon+it,h)\ll_{t,n} D(k+\epsilon-it,h) k^{n+1+\epsilon}.
$$

By Phragmen-Lindel\"of principle,

\begin{equation}\label{P-L}
D(k-\tfrac n4-\tfrac 14-\epsilon+it,h)\ll_{t,n} D(k+\epsilon-it,h)k^{\tfrac {n+1}2+\epsilon}\ll k^{\tfrac {n+1}2+\epsilon} \mathrm{Res}_{s=k} D(s,h).
\end{equation}

We have
$$L(2k-n+\epsilon,f\otimes f)\ll \mathrm{Res}_{s=2k-n} L(s,f\otimes f)\ll L(1,Sym^2\pi_f)\ll k^\epsilon.
$$
From the functional equation (\ref{f-f-RS}) of $L(s,f\otimes f)$ and by Phragmen-Lindel\"of principle, if $s=2k-n-j+it$ with $j\geq 1$,
\begin{eqnarray}\label{P-L1}
&& |L(2k-n-j+it,f\otimes f)|\ll_{t,n} \left|\frac {\Gamma(2k-n+j-1-it)}{\Gamma(2k-n-j+it)}\right| |L(2k-n+j-1-it,f\otimes f)|\\
&& \phantom{xxxxxxxxxxxxxxxxxxxx} \ll_{t,n} k^{2j-1}|L(2k-n+j-1-it,f\otimes f)|\ll k^{2j-1}.\nonumber
\end{eqnarray}

By convexity bound, $L(2k-n-\tfrac 12+it,f\otimes f)\ll_{t,n} k^{\tfrac 12+\epsilon}$.

Now we compute $R(s,I_n(h),I_n(h))$ at $s=k-\tfrac n4-\tfrac 14+it$. 
We divide into two cases:

Case 1. $n=4l+2$. 
In this case, the second sum in (\ref{explicit-formula}) is zero since $c_h(1)=0$.

Then for $s=k-l-\tfrac 34+it$,
\begin{eqnarray*}
&& R(k-l-\tfrac 34+it,I_n(h),I_n(h))\ll_{t,n} 2^{k(4l+2)} |D(k-l-\tfrac 34+it,h)| \prod_{j=1}^{2l} |L(2k-2l-\tfrac 32-2j+2it,f\otimes f)|.
\end{eqnarray*}

From (\ref{D-R}), (\ref{P-L}) and (\ref{P-L1}), in the product, $j=1,...,l$ contributes to $O(k^\epsilon)$. Hence
$$|\prod_{j=1}^{2l} |L(2k-2l-\tfrac 32-2j+2it,f\otimes f)|\leq k^{\sum_{i=1}^l 2(2i-1)}=k^{2l^2+\epsilon}.
$$
Also $L(2k-2i,f\otimes f)\gg 1$ for each $i=1,...,\tfrac n2-1$.
Therefore, 
$$R(k-l-\tfrac 34+it,I_n(h),I_n(h))\ll_{t,n} \mathrm{Res}_{s=k} R(s,I_n(h),I_n(h)) k^{\frac {n^2}8+1+\epsilon}.
$$

This verifies Conjecture \ref{sub-sp} except for $n=2$.

Case 2. $n=4l$. In the same way as above, we can estimate the term in the first sum: 
for $s=k-\frac {n+1}4+it$, in the product 
$$\prod_{j=1}^{\tfrac n2-1} L(2k-\tfrac n2-\tfrac 12-2j+2it,f\otimes f)=\prod_{j=1}^{2l} L(2k-2l-\tfrac 12-2j+2it, f\otimes f),
$$
$j=1,...,l-1$ contribute to $O(k^{\epsilon})$. When $j=l$, it gives rise to the central value. So by (\ref{P-L1}),
$$\prod_{j=1}^{2l} L(2k-2l-\tfrac 12-2j+2it, f\otimes f)
\ll_{t,n} k^{\sum_{i=1}^{l-1} 4i} k^{\tfrac 12+\epsilon}\ll_{t,n} k^{2l(l-1)+\tfrac 12+\epsilon}=k^{\tfrac {n(n-4)}8+\tfrac 12+\epsilon}.
$$
Therefore, the first sum is majorized by
$$
\ll_{t,n} 2^{k(4l)} |D(k-l-\tfrac 14+it,h)| \prod_{j=1}^{2l-1} |L(2k-2l-\tfrac 12-2j+2it,f\otimes f)|
\ll \mathrm{Res}_{s=k} R(s,I_n(h),I_n(h)) k^{\frac {n^2}8+1+\epsilon}.
$$

For the second sum, recall the following identity \cite[Theorem 1]{K-Z}:
\begin{equation*}
\frac {|c_{h}(1)|^2}{\langle h,h\rangle}=\frac {\Gamma(k-\tfrac n2)}{\pi^{k-\tfrac n2}} \frac {L(k-\tfrac n2,f)}{\langle f,f\rangle}.
\end{equation*}
Then by Theorem \ref{th.Ikeda-conjecture}, and the fact that $\langle f,f\rangle=\frac {\pi^{-2k+n}}{12} \Gamma(2k-n)L(1,f,{\rm Ad})$,

$$|c_h(1)|^2=\frac {\Gamma(k-\tfrac n2)L(k-\tfrac n2,f)}{\pi^{k-\tfrac n2}\langle f,f\rangle}\langle h,h\rangle
=e_n \frac {2^{-kn-4k}L(k-\tfrac n2,f) \mathrm{Res}_{s=k} R(s,I_n(h),I_n(h))}{L(k,f)L(1,f,{\rm Ad}) \prod_{i=1}^{\tfrac n2-1} L(2k-2i,f\otimes f)},
$$
for some constant $e_n$. By convexity bound, $L(k-\tfrac n2,f)\ll_n k^\frac 12$, and $L(1,f,{\rm Ad})\gg_n k^{-\epsilon}$. 
Moreover, by Deligne's estimate, we have
$L(k,f) \ge \frac{\zeta(n+1)^2}{\zeta(n/2+1/2)^2}$.
When $s=k-l-\frac 14+it$, the second sum is
\begin{equation}\label{second}
\ll_{n,t} \mathrm{Res}_{s=k} R(s,I_n(h),I_n(h)) \frac {2^{-kn-4k} L(k-\tfrac n2,f)}{L(k,f)L(1,f,{\rm Ad})} 
\prod_{j=1}^{2l} \left|L(2k-2l-\tfrac 12-2j+2it,f\otimes f)\right|.
\end{equation}

Here 
$$\prod_{j=1}^{2l} \left|L(2k-2l-\tfrac 12-2j+2it,f\otimes f)\right|\ll_{t,n} k^{2l(l+1)+\tfrac 12+\epsilon}.
$$

Therefore, (\ref{second}) has exponential decay as $k\to\infty$. Therefore,
 
$$R(k-l-\tfrac 14+it,I_n(h),I_n(h))\ll_{t,n} \mathrm{Res}_{s=k} R(s,I_n(h),I_n(h)) k^{\frac {n^2}8+1+\epsilon}.
$$

This verifies Conjecture \ref{sub-sp}.

\bigskip

\end{document}